\documentclass[11pt,reqno]{amsart}
\usepackage[T1]{fontenc}
\usepackage[utf8]{inputenc}
\usepackage{amsmath,amssymb,amsthm,mathrsfs}
\usepackage{xcolor}
\usepackage{tikz}
\usepackage{pgfplots}
\pgfplotsset{compat=1.16}
\usetikzlibrary{arrows.meta,positioning,patterns,decorations.pathreplacing,calc}
\definecolor{Shade}{gray}{0.90}
\pgfplotsset{
  every axis/.append style={
    axis lines=left,
    axis line style={gray!55,line width=.4pt},
    tick style={gray!55,line width=.4pt},
    tick align=outside,
    tick label style={font=\footnotesize},
    label style={font=\small},
    title style={font=\small,yshift=3pt},
    legend style={font=\footnotesize,draw=gray!45,fill=white,
      fill opacity=.92,text opacity=1,rounded corners=1pt,inner sep=3pt},
    every axis plot/.append style={line width=.9pt},
  },
}
\newlength{\figgap}
\tikzset{
  fig label/.style={font=\footnotesize},
  leader/.style={gray!70,line width=.3pt},
}
\usepackage[colorlinks=true,linkcolor=BrickRed,citecolor=RoyalBlue]{hyperref}
\usepackage{geometry}
\definecolor{BrickRed}{rgb}{0.72,0.16,0.11}
\definecolor{RoyalBlue}{rgb}{0.13,0.29,0.65}
\definecolor{ForestGreen}{rgb}{0.13,0.55,0.13}
\usepackage{cite}
\usepackage{palatino}

\newtheorem{theorem}{Theorem}[section]
\newtheorem{proposition}[theorem]{Proposition}
\newtheorem{lemma}[theorem]{Lemma}
\newtheorem{corollary}[theorem]{Corollary}
\theoremstyle{definition}
\newtheorem{definition}[theorem]{Definition}
\newtheorem{remark}[theorem]{Remark}
\newtheorem{example}[theorem]{Example}
\newtheorem{openproblem}[theorem]{Open Problem}
\newtheorem{assumption}[theorem]{Assumption}

\newcommand{\R}{\mathbb{R}}
\newcommand{\N}{\mathbb{N}}
\newcommand{\Om}{\Omega}
\newcommand{\dive}{\operatorname{div}}
\newcommand{\Lip}{\operatorname{Lip}}
\newcommand{\dist}{\operatorname{dist}}

\newcommand{\aaa}{\mathbf{a}}
\newcommand{\sg}{\sigma}
\newcommand{\Lam}{\Lambda}
\newcommand{\eps}{\varepsilon}
\newcommand{\nb}{\nabla}

\definecolor{NewEdit}{rgb}{0.62,0.00,0.55}

\numberwithin{equation}{section}
\begin{document}

\title[Born--Infeld model via superposition of infinitely many $p$-Laplacians]{Gradient
constraints, Born--Infeld,  and maximal surfaces via superposition of infinitely
many $p$-Laplacians}

\author{H. Abdel Hamid}
\address{Faculty of Sciences and Humanities, Fahad Bin Sultan University,
P.O.\ Box 15700, Tabuk 71454, Saudi Arabia}
\email{habdelhamid@fbsu.edu.sa}

\author{J. C. Chata Ortiz}
\address{Departamento de Matem\'atica, Universidade Federal de S\~ao Carlos --
UFSCar, 13565-905, S\~ao Carlos -- SP, Brazil}
\email{juan.carlos@unesp.br}

\author{F. Petitta}
\address{Dipartimento di Scienze di Base e Applicate per l'Ingegneria,
Sapienza Universit\`a di Roma, Via Scarpa 16, 00161 Roma, Italia}
\email{francesco.petitta@uniroma1.it}

\author{J. D. Rossi}
\address{Departamento de Matem\'aticas y Estad\'istica, Universidad Torcuato di Tella,
Campus Alcorta -- Av.\ Figueroa Alcorta 7350, Buenos Aires, Argentina}
\email{julio.rossi@utdt.edu}

\subjclass[2020]{35J92, 35J87, 35J60, 49J45, 35Q60, 53C42}
\keywords{Series of $p$-Laplacians, gradient constraint, Born--Infeld equation,
maximal spacelike hypersurfaces, variational inequalities, Cheeger constant,
nonexistence}

\begin{abstract}
We study the Dirichlet problem for an infinite series of $p$-Laplacians,
\[
-\sum_{p=2}^{\infty}a_{p}\Delta_{p}u=f \ \text{ in }\Om,\qquad u=g\ \text{ on }\partial\Om,
\]
where $\{a_{p}\}$ is a sequence of nonnegative numbers whose associated power
series has radius of convergence $\sg\in(0,\infty]$. Such an operator is
formally $-\dive\big(\mathcal A(|\nb u|)\nb u\big)$ with $\mathcal A$ singular
at $|\nb u|=\sg$, and the model cases are the mean curvature operator in
Minkowski space and the Born--Infeld operator of nonlinear electrostatics.
The radius of convergence of the series forces the \emph{gradient constraint}
$\|\nb u\|_{\infty}\le\sg$ for the solutions, so that the natural variational problem is a
constrained one. We show that a unique minimizer of the associated energy
always exists (for boundary data compatible with the constraint) and always solves a variational inequality, and we identify the
\emph{saturation flux} $\Lam:=\sum_{p\ge2}a_{p}\sg^{p-1}=\sg^{-1}\sum_{p\ge2}a_p\sg^p$
as the quantity governing the solvability of the equation itself: if
$\Lam<\infty$ then a weak solution exists only if
$|\int_{E}f|\le\Lam P(E)$ for every set of finite perimeter $E\Subset\Om$, so
that for $f\equiv\lambda$ no solution exists as soon as
$\lambda>\Lam\, h(\Om)$, $h(\Om)$ being the Cheeger constant of $\Om$. The
threshold is sharp on balls, where the problem is solved explicitly and the
minimizer is shown to develop a \emph{gradient saturation region}
$\{|\nb u|=\sg\}$ of positive measure. On the other hand, when $\Lam=\infty$, which is the case for all Born--Infeld type operators,  no such
obstruction is present, and we prove that the minimizer solves the equation
whenever a Lipschitz bound below $\sg$ is available; {for $f\equiv0$ we
obtain such a bound, uniformly with respect to the truncations of the series,
under a bounded slope condition on the boundary datum}. As applications we obtain an isoperimetric bound on the charge
densities supported by a nonlinear electrostatics with saturating displacement,
and a quantitative convergence rate for the weak field expansion of the
Born--Infeld model.
\end{abstract}

\maketitle

\tableofcontents

\section{Introduction}\label{sec:intro}

Let $\Om$ be a bounded, connected, open subset of $\R^{N}$, $N\ge1$, with
Lipschitz boundary, let $\{a_{p}\}_{p\ge2}$ be a sequence of nonnegative real
numbers,
and consider the Dirichlet problem
\begin{equation}\label{eq:main}
\begin{cases}
\displaystyle -\sum_{p=2}^{\infty}a_{p}\Delta_{p}u=f &\text{in }\Om,\\[2mm]
u=g &\text{on }\partial\Om,
\end{cases}
\end{equation}
where $\Delta_{p}u=\dive(|\nb u|^{p-2}\nb u)$, $f$ is a given datum in $L^{1}(\Omega)$, and the boundary datum verifies $g\in C^{0,1}(\partial\Om)$.

Problem \eqref{eq:main} is the exact counterpart, for singularities in the
gradient variable, of a class of problems that has been studied for
singularities in the $u$ variable. Indeed, elliptic equations whose model is
\begin{equation}\label{eq:orsina}
-\dive\Big(\frac{\nb v}{1-v}\Big)=f\quad\text{in }\Om,\qquad v=0
\quad\text{on }\partial\Om,
\end{equation}
were studied in \cite{O2003} (see also \cite{ACLMOP2009,BLOP2011} for {formally} related problems). Expanding
the coefficient in power series, \eqref{eq:orsina} becomes formally
$-\sum_{m\ge1}\frac1m\Delta v^{m}=f$, a \emph{generalized porous medium
equation}, and the interplay between the radius of convergence of the series
and the solvability of the Dirichlet problem was analyzed in \cite{P2010} for $f\in L^{q}(\Omega)$, $q>\frac{N}{2}$: a
solution exists for every datum if and only if $\sum_{m}a_{m}\sg^{m}=+\infty$,
$\sg$ being the radius of convergence, while for $\sum_{m}a_{m}\sg^{m}<+\infty$
solutions cease to exist for large data, and the approximating sequences
develop \emph{flat zones} $\{v=\sg\}$ of positive measure.

Here we replace the singularity in $u$ by a singularity in $|\nb u|$. The
model case of \eqref{eq:main} is
\begin{equation}\label{eq:model}
-\dive\Big(\frac{\nb u}{(1-|\nb u|^{q})^{\alpha}}\Big)=f\quad\text{in }\Om,
\qquad u=g\quad\text{on }\partial\Om,
\end{equation}
with $q\in\N$, $\alpha>0$, which for $q=2$, $\alpha=\tfrac12$ is the
\emph{mean curvature operator in Minkowski space},
\begin{equation}\label{eq:mink}
-\dive\Big(\frac{\nb u}{\sqrt{1-|\nb u|^{2}}}\Big)=f ,
\end{equation}
governing spacelike hypersurfaces of prescribed mean curvature in the
Lorentz--Minkowski spacetime
\cite{BS1982,G1983,BCP2021,BJM2009,BJM2009b,BJM2014,COOR2013A,COOR2013B,BDD,Bayard},
and which is at the same time the Euler--Lagrange equation of the Born--Infeld
Lagrangian of nonlinear electrostatics \cite{BI1934,BdAP}. Expanding the
coefficient by the binomial series,
\[
\frac{1}{\sqrt{1-z^{2}}}=\sum_{k\ge0}\frac{1}{4^{k}}\binom{2k}{k}z^{2k},
\]
equation \eqref{eq:mink} takes precisely the form \eqref{eq:main} with
$a_{2k+2}=4^{-k}\binom{2k}{k}$ and $a_{p}=0$ for $p$ odd.

Two features distinguish \eqref{eq:main} from its zeroth order ancestor, and
they are the reason why the analysis is genuinely different. The first is that
the radius of convergence $\sg$ of the series now produces a constraint on the
\emph{gradient}: any function for which the left hand side of \eqref{eq:main}
makes sense satisfies $\|\nb u\|_{L^{\infty}(\Om)}\le\sg$ (Proposition
\ref{prop:constraint}). Problem \eqref{eq:main} is therefore intrinsically a
\emph{gradient constrained} problem, of the kind met in elastic--plastic
torsion and in sandpile growth models, and its natural variational
formulation is a constrained one. The second feature is    that the two natural
quantities attached to the sequence $\{a_{p}\}$, namely the total energy and
the total flux at the constraint,
\[
\Phi(\sg)=\sum_{p\ge2}\frac{a_{p}}{p}\sg^{p},
\qquad
\Lam=\sum_{p\ge2}a_{p}\sg^{p-1}=\frac1\sg\sum_{p\ge2}a_{p}\sg^{p},
\]
no longer coincide, and they play different roles: \emph{the existence of a
minimizer is governed by} $\Phi(\sg)$, \emph{whereas the solvability of the
equation is governed by} $\Lam$. Since $\Phi(\sg)\le\tfrac{\sg}{2}\Lam$, three
regimes are possible,
\begin{equation}\label{eq:trichotomy}
\text{(H1) } \Phi(\sg)=\infty;\qquad
\text{(H2) } \Phi(\sg)<\infty,\ \Lam=\infty;\qquad
\text{(H3) } \Lam<\infty ,
\end{equation}
and all three occur. In particular the Minkowski operator \eqref{eq:mink}
belongs to (H2), because $\Phi(1)=1$ while
$\Lam=\sum_{k}4^{-k}\binom{2k}{k}=+\infty$; this already shows that the naive
transposition of the criterion of \cite{P2010} for existence of solutions, which would be phrased in
terms of $\Phi(\sg)$, cannot be the right one, since the Dirichlet problem
for \eqref{eq:mink} is solvable for every bounded datum by the results of
Bartnik and Simon \cite{BS1982}.

Our main results can be summarized as follows.

\smallskip

\noindent\emph{(i) The variational problem is always well posed.} If
$\|\nb\bar g\|_{\infty}\le\sg$ for some Lipschitz extension $\bar g$ of $g$ with
finite energy, then the functional
\[
F(v)=\int_{\Om}\Phi(|\nb v|)-\int_{\Om}fv
\]
has a unique minimizer $u$ over
$S_{g}=\{v\in C^{0,1}(\overline\Om):\|\nb v\|_{\infty}\le\sg,\ v=g
\text{ on }\partial\Om\}$, for every $f\in L^{1}(\Om)$ (Theorem
\ref{thm:min}). Moreover $u$ \emph{always} solves the variational inequality
\begin{equation}\label{eq:VIintro}
\sum_{p\ge2}a_{p}\int_{\Om}|\nb u|^{p-2}\nb u\cdot\nb(v-u)\ \ge\
\int_{\Om}f(v-u)\qquad\text{for every }v\in S_{g},\ F(v)<\infty,
\end{equation}
(Theorem \ref{thm:VI}), and it solves the equation as soon as the constraint
is inactive, i.e.\ as soon as $\|\nb u\|_{\infty}<\sg$ (Corollary
\ref{cor:VI2PDE}).

\smallskip
\noindent\emph{(ii) If $\Lam<\infty$, large data are forbidden.} If $u$ is a
weak solution of \eqref{eq:main} then the flux field
$\aaa(\nb u)=\sum_{p}a_{p}|\nb u|^{p-2}\nb u$ is bounded by $\Lam$, whence
\begin{equation}\label{eq:giustiintro}
\Big|\int_{E}f\Big|\ \le\ \Lam\,P(E)\qquad\text{for every set of finite
perimeter } E\Subset\Om ,
\end{equation}
(Theorem \ref{thm:nonexistence}). For $f\equiv\lambda$ this means that no weak
solution exists as soon as $\lambda>\Lam\,h(\Om)$, where $h(\Om)$ is the
Cheeger constant of $\Om$. Condition \eqref{eq:giustiintro} is the exact
analogue, in the present setting, of the classical necessary condition of
Giusti and Massari--Miranda for the prescribed mean curvature equation in
Euclidean space; in the language of \cite{P2010}, the first eigenvalue of the
$2$-Laplacian is here replaced by the first eigenvalue of the $1$-Laplacian.
We show in Section \ref{sec:radial} that the threshold $\Lam h(\Om)$ is attained on
balls, and that above it the minimizer develops a \emph{gradient saturation
region} $\{|\nb u|=\sg\}$ of positive measure, which is the exact counterpart
of the flat zones of \cite{P2010}, of the light rays of \cite{BS1982} and of
the plastic regions of elastic--plastic torsion.

\smallskip

\noindent\emph{(iii) If $\Lam=\infty$, everything reduces to a Lipschitz
bound.} The obstruction \eqref{eq:giustiintro} disappears, and by Corollary
\ref{cor:VI2PDE} the only missing ingredient for solvability is an a priori
estimate $\|\nb u\|_{\infty}\le\gamma<\sg$. We prove that for
$f$ constant the global Lipschitz constant of the minimizer equals its
boundary--to--interior slope (Theorem \ref{thm:haarrado}), so that the whole
question reduces to a \emph{boundary} gradient estimate; and we prove that
 {for $f\equiv0$ and} under the bounded slope condition of rank $K<\sg$ the
bound holds with $\gamma=K$, \emph{uniformly with respect to the truncations of
the series} (Theorem \ref{thm:bsc}). {Whenever such a uniform bound is
available}, the natural approximation scheme obtained by truncating the series
converges geometrically, with rate $K/{\sigma}$ (Theorem
\ref{thm:trunc}).

\smallskip
\noindent\emph{(iv) Consequences for the applied models.} Reading
\eqref{eq:giustiintro} in the language of nonlinear electrostatics, where
$-\nb u=\mathbf E$ is the electric field and $\aaa(\nb u)=\mathbf D$ the
electric displacement, the inequality is nothing but Gauss' law combined with
the bound $|\mathbf D|\le\Lam$: a charge density $\rho=f$ is admissible 
only if it satisfies an isoperimetric bound. Born--Infeld electrostatics is
precisely the borderline theory in which $|\mathbf E|$ is bounded but
$|\mathbf D|$ is not, which is why  no obstruction of this kind arises;
constitutive laws with saturating displacement do not, and our theorem
gives an explicit upper bound on the charge they can support. On the geometric side,
the same statement contains both the Euclidean obstruction and the Minkowskian
non-obstruction for prescribed mean curvature as the two cases
$\Lam<\infty$ and $\Lam=\infty$ of one single criterion. Finally, the
truncated problems are standard $(2,n)$-growth problems, so that Theorem
\ref{thm:trunc} provides a rigorous convergence statement, with a rate, for
the weak field expansion of the Born--Infeld model.

\smallskip

The paper is organized as follows: Section \ref{sec:setting} contains the
assumptions, the basic properties of the three functions related to
$\{a_{p}\}$, and a characterization of the operators representable as series of
$p$-Laplacians. Section \ref{sec:var} deals with the variational problem, the
variational inequality and the passage to the equation. Section
\ref{sec:existence} contains the Lipschitz estimates and the existence results
in the regime $\Lam=\infty$; Section \ref{sec:nonexistence}, the nonexistence
result in the regime $\Lam<\infty$. Section \ref{sec:radial} solves the radial
problem explicitly and proves the sharpness of the threshold, and Section
\ref{sec:1d} treats the one dimensional case. Applications and open problems
are discussed in Sections \ref{sec:applications} and \ref{sec:open}.
 {The  radial and one dimensional reductions of Section \ref{sec:radial} and
Section \ref{sec:1d} also make all the phenomena described above accessible to
numerics; this will be the content of a subsequent study.}

\medskip

\noindent\textbf{Notation.} $C$ denotes a positive constant which may change
from line to line and never depends on the solution. $P(E)$ is the perimeter of
a set of finite perimeter $E$ in $\R^{N}$, $|E|$ its Lebesgue measure, and
\[
h(\Om)=\inf\Big\{\frac{P(E)}{|E|}\ :\ E\Subset\Om,\ 0<|E|<\infty\Big\}
\]
the Cheeger constant of $\Om$. We write $T_{k}(s)=\max(-k,\min(k,s))$.

\section{Setting, assumptions, and the class of admissible operators}
\label{sec:setting}

\subsection{The three functions related to the sequence}

Throughout the paper we assume
\begin{equation}\label{eq:H}
a_{p}\ge0\ \ (p\ge2),\qquad a_{2}>0,\qquad a_{p}\neq0
\ \text{ for infinitely many } p .
\end{equation}
The assumption $a_{2}>0$ is the counterpart of the assumption $a_{1}\neq0$ of
\cite{P2010}: it guarantees nondegeneracy of the operator at $\nb u=0$; the
last assumption in \eqref{eq:H} is what makes the problem genuinely different
from a finite sum of $p$-Laplacians.

Related to $\{a_{p}\}$ there are three relevant functions,
\begin{equation}\label{eq:AphiPhi}
\mathcal A(t)=\sum_{p\ge2}a_{p}t^{p-2},
\qquad
\varphi(t)=t\,\mathcal A(t)=\sum_{p\ge2}a_{p}t^{p-1},
\qquad
\Phi(t)=\int_{0}^{t}\varphi(s)\,ds=\sum_{p\ge2}\frac{a_{p}}{p}t^{p},
\end{equation}
which we call respectively the \emph{coefficient}, the \emph{flux} and the
\emph{energy density} of the problem, and the vector field
\begin{equation}\label{eq:field}
\aaa(\xi)=\mathcal A(|\xi|)\,\xi=\sum_{p\ge2}a_{p}|\xi|^{p-2}\xi=\nb_{\xi}\big[\Phi(|\xi|)\big],
\qquad \xi\in\R^{N},\ |\xi|<\sg .
\end{equation}
With this notation problem \eqref{eq:main} reads
$-\dive\big(\aaa(\nb u)\big)=f$, i.e.\
$-\dive\big(\mathcal A(|\nb u|)\nb u\big)=f$.

\begin{remark}\label{rem:notation}
It is worth stressing the shift of index in \eqref{eq:AphiPhi}: writing the
operator as $\dive(\mathcal A(|\nb u|)\nb u)$ forces $\mathcal A$ to be the
power series $\sum_{p}a_{p}t^{p-2}$, so that $\mathcal A(0)=a_{2}>0$. For
instance, for the Minkowski operator \eqref{eq:mink} one has
$\mathcal A(t)=(1-t^{2})^{-1/2}$ and $a_{2}=1$.
\end{remark}

Let
\begin{equation}\label{eq:sigma}
\frac{1}{\sg}=\limsup_{p\to\infty}a_{p}^{1/p}\ \in[0,+\infty],
\end{equation}
so that $\sg\in[0,+\infty]$ is the common radius of convergence of the three
series in \eqref{eq:AphiPhi}. We shall mostly assume $0<\sg<\infty$; the two extreme
cases, $\sigma=0$ and $\sigma = \infty$, are discussed in Remark \ref{rem:extreme}. We finally set
\begin{equation}\label{eq:LamPhi}
\Lam:=\varphi(\sg^{-})=\sum_{p\ge2}a_{p}\sg^{p-1}\in(0,+\infty],
\qquad
\Phi(\sg):=\Phi(\sg^{-})=\sum_{p\ge2}\frac{a_{p}}{p}\sg^{p}\in(0,+\infty].
\end{equation}

\begin{lemma}\label{lem:elementary}
Assume \eqref{eq:H} and $0<\sg<\infty$. Then
\begin{enumerate}
\item[(i)] $\mathcal A$ is nondecreasing on $[0,\sg)$ with
$\mathcal A\ge a_{2}>0$; $\varphi$ is an increasing homeomorphism of
$[0,\sg)$ onto $[0,\Lam)$ with $\varphi'\ge a_{2}$;
\item[(ii)] $\Phi$ is strictly convex and increasing on $[0,\sg)$ and
$\Phi(t)\ge\frac{a_{2}}{2}t^{2}$;
\item[(iii)] $\xi\mapsto\Phi(|\xi|)$ is strictly convex on
$\{|\xi|<\sg\}$ and $\aaa=\nb_{\xi}\Phi(|\cdot|)$ is strictly monotone there;
\item[(iv)] $\displaystyle \Phi(\sg)\le\frac{\sg\Lam}{2}$; in particular
$\Lam<\infty\Rightarrow\Phi(\sg)<\infty$, and the trichotomy
\eqref{eq:trichotomy} is exhaustive and mutually exclusive.
\end{enumerate}
\end{lemma}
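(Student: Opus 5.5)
The plan is to prove everything termwise from the power series, using only that all coefficients are nonnegative, that $a_2>0$, and that power series may be differentiated termwise inside the disc of convergence. For $t\in[0,\sg)$ the three series in \eqref{eq:AphiPhi} converge absolutely and define smooth functions.

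\textbf{Part (i).} $\mathcal A(t)=a_2+\sum_{p\ge3}a_pt^{p-2}$ is a sum of nondecreasing nonnegative terms, hence nondecreasing with $\mathcal A\ge a_2$. For $\varphi$ we differentiate termwise:
\[
\varphi'(t)=a_2+\sum_{p\ge3}(p-1)a_pt^{p-2}\ge a_2>0 .
\]
So $\varphi$ is continuous and strictly increasing on $[0,\sg)$, with $\varphi(0)=0$. By monotone convergence, $\varphi(t)\uparrow\sum_pa_p\sg^{p-1}=\Lam$ as $t\uparrow\sg$ (Abel's theorem for nonnegative coefficients), with the value $+\infty$ allowed. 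By the intermediate value theorem $\varphi$ is therefore a homeomorphism of $[0,\sg)$ onto $[0,\Lam)$.

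\textbf{Part (ii).} Since $\Phi'=\varphi>0$ on $(0,\sg)$, $\Phi$ is increasing. Since $\Phi''=\varphi'\ge a_2>0$, $\Phi$ is strictly convex. The bound $\Phi(t)\ge\frac{a_2}{2}t^2$ comes from keeping only the $p=2$ term.

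\textbf{Part (iii).} This is the only genuinely multidimensional point. The function $\xi\mapsto\Phi(|\xi|)$ is the composition of the convex function $\xi\mapsto|\xi|$ with $\Phi$, which is convex and nondecreasing, so it is convex. For strictness, take $\xi\ne\eta$ and $\theta\in(0,1)$, and split into two cases.
\begin{itemize}
\item If $|\theta\xi+(1-\theta)\eta|<\theta|\xi|+(1-\theta)|\eta|$, strictness follows because $\Phi$ is strictly increasing.
\item If equality holds in the triangle inequality, then $\xi,\eta$ are nonnegative multiples of one unit vector and $|\xi|\neq|\eta|$. Strictness then follows from the strict convexity of $\Phi$ in one variable.
\end{itemize}
The identity $\aaa=\nb_\xi\Phi(|\xi|)$ follows from the chain rule, $\nb_\xi\Phi(|\xi|)=\varphi(|\xi|)\xi/|\xi|=\mathcal A(|\xi|)\xi$, which is smooth across $\xi=0$ because $\mathcal A$ is a power series in $|\xi|^2$-free form $t^{p-2}$ times $\xi$. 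Strict monotonicity, $(\aaa(\xi)-\aaa(\eta))\cdot(\xi-\eta)>0$ for $\xi\ne\eta$, is the standard consequence of strict convexity of a differentiable function: add the two strict supporting-hyperplane inequalities at $\xi$ and at $\eta$.

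\textbf{Part (iv).} For each $p\ge2$,
\[
\frac{a_p}{p}\sg^p\le\frac{a_p}{2}\sg^p=\frac{\sg}{2}\,a_p\sg^{p-1}.
\]
Summing the nonnegative terms gives $\Phi(\sg)\le\frac{\sg}{2}\Lam$. In particular $\Lam<\infty$ forces $\Phi(\sg)<\infty$. Hence (H1) and (H3) are disjoint, and (H2) is by definition disjoint from both. Every sequence satisfies one of the three: either $\Phi(\sg)=\infty$, or $\Phi(\sg)<\infty$ and then $\Lam$ is either finite or infinite. So the trichotomy is exhaustive and mutually exclusive.

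I expect no real obstacle. The only points needing care are the boundary limit $\varphi(\sg^-)=\Lam$, which is handled by monotone convergence, and the equality case in the triangle inequality in (iii).
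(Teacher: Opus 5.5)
Your proof is correct and follows the paper's termwise argument; your case split on equality in the triangle inequality in (iii) makes explicit the strictness that the paper compresses into ``strictly so because $\Phi''\ge a_2$'', which also needs the strict monotonicity of $\Phi$, as you use. One small correction: $\aaa(\xi)=\mathcal A(|\xi|)\xi$ need not be smooth at $\xi=0$ (the term $a_3|\xi|\xi$ is not $C^2$ there), but you only need $\Phi(|\cdot|)\in C^1$ with gradient $\aaa$, and this holds because $\Phi(t)=O(t^2)$ near $0$ and $\mathcal A$ is continuous on $[0,\sg)$.
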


\begin{proof}
(i)--(iii) are immediate from \eqref{eq:AphiPhi} and $a_{p}\ge0$, $a_{2}>0$;
note $\varphi'(t)=\sum_{p}(p-1)a_{p}t^{p-2}\ge a_{2}$, and that
$\Phi(|\xi|)$ is convex as a nondecreasing convex function of the norm, strictly
so because $\Phi''\ge a_{2}$. For (iv),
$\Phi(\sg)=\sum_{p}\frac{a_{p}}{p}\sg^{p}\le\frac12\sum_{p}a_{p}\sg^{p}
=\frac{\sg\Lam}{2}$, since $p\ge2$.
\end{proof}

\begin{figure}[ht]
\centering
%---------------------------------------------------------------- (H1)
\begin{tikzpicture}
\begin{axis}[width=.335\textwidth,height=.28\textwidth,
  xmin=0,xmax=1.34,ymin=0,ymax=1.55,
  xtick={1},xticklabels={$\sg$},ytick=\empty,
  xlabel={$t$},title={(H1)\quad$\Phi(\sg)=\infty$}]
\fill[Shade] (axis cs:1,0) rectangle (axis cs:1.34,1.55);
\draw[gray!55,dashed] (axis cs:1,0)--(axis cs:1,1.55);
\addplot[BrickRed,domain=0:0.95,samples=140,smooth] {-ln(1-x)-x};
\node[BrickRed,fig label,anchor=east] at (axis cs:0.86,1.30) {$\Phi$};
\end{axis}
\end{tikzpicture}\hspace{\figgap}
%---------------------------------------------------------------- (H2)
\begin{tikzpicture}
\begin{axis}[width=.335\textwidth,height=.28\textwidth,
  xmin=0,xmax=1.34,ymin=0,ymax=1.55,
  xtick={1},xticklabels={$\sg$},ytick=\empty,
  xlabel={$t$},title={(H2)\quad$\Phi(\sg)<\infty=\Lam$}]
\fill[Shade] (axis cs:1,0) rectangle (axis cs:1.34,1.55);
\draw[gray!55,dashed] (axis cs:1,0)--(axis cs:1,1.55);
\addplot[RoyalBlue,domain=0:0.99999,samples=200,smooth] {1-sqrt(1-x*x)};
\addplot[RoyalBlue,only marks,mark=*,mark size=1.1] coordinates {(1,1)};
\draw[gray!55,dashed] (axis cs:0,1)--(axis cs:1,1);
\node[gray!70,fig label,anchor=south west] at (axis cs:0.02,1.02) {$\Phi(\sg)$};
\node[RoyalBlue,fig label,anchor=north west] at (axis cs:0.60,0.62) {$\Phi$};
\end{axis}
\end{tikzpicture}\hspace{\figgap}
%---------------------------------------------------------------- (H3)
\begin{tikzpicture}
\begin{axis}[width=.335\textwidth,height=.28\textwidth,
  xmin=0,xmax=1.34,ymin=0,ymax=1.55,
  xtick={1},xticklabels={$\sg$},ytick=\empty,
  xlabel={$t$},title={(H3)\quad$\Lam<\infty$}]
\fill[Shade] (axis cs:1,0) rectangle (axis cs:1.34,1.55);
\draw[gray!55,dashed] (axis cs:1,0)--(axis cs:1,1.55);
\addplot[ForestGreen,domain=0:0.9999,samples=200,smooth]
  {x^2-0.25+(1-x)^2/4-(1-x)^2/2*ln(1-x)};
\draw[ForestGreen,dashed,line width=.5pt]
  (axis cs:0.7,0.15)--(axis cs:1.30,1.35);
\addplot[ForestGreen,only marks,mark=*,mark size=1.1] coordinates {(1,0.75)};
\node[ForestGreen,fig label,anchor=east] at (axis cs:0.96,1.25)
  {slope $\Lam$};
\node[ForestGreen,fig label,anchor=south east] at (axis cs:0.60,0.34) {$\Phi$};
\end{axis}
\end{tikzpicture}
\caption{The trichotomy \eqref{eq:trichotomy}, read off the energy density
$\Phi$ near the constraint; the shaded band is the inadmissible range
$t>\sg$. In (H1) the energy is infinite at $\sg$ and the constraint cannot be
attained on a set of positive measure. In (H2), the Born--Infeld and Minkowski
case, the energy is finite but the slope $\Lam=\Phi'(\sg^{-})$ is infinite, and
no obstruction to solvability arises. In (H3) the slope is finite, and this is
the only regime in which solutions cease to exist for large data.}
\label{fig:trichotomy}
\end{figure}

The same trichotomy is displayed in Figure \ref{fig:flux} in terms of the flux
$\varphi$, which is the form in which it will be used throughout: what matters
is whether $\varphi$ blows up at $\sg$ or reaches a finite value $\Lam$.

\begin{figure}[ht]
\centering
\begin{tikzpicture}
\begin{axis}[width=.62\textwidth,height=.40\textwidth,
  xmin=0,xmax=1.18,ymin=0,ymax=6.3,
  xtick={0,0.5,1},xticklabels={$0$,,$\sg$},
  ytick={2},yticklabels={$\Lam$},
  xlabel={$t$},ylabel={$\varphi(t)$},
  legend style={at={(0.035,0.965)},anchor=north west},legend cell align=left]
\fill[Shade] (axis cs:1,0) rectangle (axis cs:1.18,6.3);
\draw[gray!55,dashed] (axis cs:1,0)--(axis cs:1,6.3);
\draw[gray!55,dashed] (axis cs:0,2)--(axis cs:1,2);
\addplot[BrickRed,domain=0:0.8572,samples=200,smooth] {x/(1-x)};
\addlegendentry{(H1)\ \ $t/(1-t)$}
\addplot[RoyalBlue,domain=0:0.98639,samples=300,smooth] {x/sqrt(1-x*x)};
\addlegendentry{(H2)\ \ $t/\sqrt{1-t^{2}}$}
\addplot[ForestGreen,domain=0:0.9999,samples=300,smooth]
  {2*x+(1-x)*ln(1-x)};
\addlegendentry{(H3)\ \ $2t+(1-t)\log(1-t)$}
\addplot[ForestGreen,only marks,mark=*,mark size=1.1,forget plot]
  coordinates {(1,2)};
\end{axis}
\end{tikzpicture}
\caption{The flux $\varphi$ for the three model sequences: $a_{p}\equiv1$ (H1);
the Minkowski sequence $a_{2k+2}=4^{-k}\binom{2k}{k}$ (H2); and the logarithmic
sequence of Example \ref{ex:log} (H3), the only one of the three for which
$\varphi(\sg^{-})=\Lam$ is finite; the first two leave the picture from above.}
\label{fig:flux}
\end{figure}

It is convenient to encode the constraint in the integrand. We set
\begin{equation}\label{eq:G}
G(\xi)=
\begin{cases}
\Phi(|\xi|), & |\xi|\le\sg,\\
+\infty, & |\xi|>\sg,
\end{cases}
\qquad \xi\in\R^{N},
\end{equation}
with $\Phi(\sg)$ understood as in \eqref{eq:LamPhi} (possibly $+\infty$). By
Lemma \ref{lem:elementary}, $G:\R^{N}\to[0,+\infty]$ is convex, lower
semicontinuous, even, and strictly convex on the interior of its domain.

\begin{remark}\label{rem:extreme}
If $\sg=0$ the constraint forces $\nb u=0$, so that \eqref{eq:main} admits a
solution only if $g$ is constant and $f=0$: no nontrivial solution exists,
which is the exact analogue of \cite[Remark 2.5]{P2010}. If $\sg=+\infty$
there is no gradient constraint and $\Lam=+\infty$; then $\Phi$ is an
$N$-function which typically fails the $\Delta_{2}$ condition (e.g.\
$a_{p}=1/p!$ gives $\Phi(t)\sim \frac{e^{t}}{t}$), so that the natural functional
framework is a nonreflexive Orlicz--Sobolev space; we refer to
\cite{Gossez,ACCZG} and we do not pursue the analysis of this case here, except to note that
all the results of Section \ref{sec:var} extend, with a different compactness argument,  with $S_{g}$ replaced by
$\{v:\ F(v)<\infty,\ v-\bar g\in W^{1,1}_{0}\}$.
\end{remark}

\subsection{Main instances of the operators involved}

It is natural to ask which quasilinear operators $\dive(\mathcal A(|\nb
u|)\nb u)$ can be written in the form \eqref{eq:main}. The answer is a
classical one.

{Let}  $\mathcal A:[0,\sg)\to\R$ {be a  real analytic function. Then} $\mathcal A$ is the sum of
a power series with nonnegative coefficients if and only if $\mathcal A$ is
\emph{absolutely monotone} on $[0,\sg)$, i.e.\ $\mathcal A^{(k)}(t)\ge0$ for
every $k\ge0$ and every $t\in[0,\sg)$; the coefficients are then
$a_{p+2}=\mathcal A^{(p)}(0)/p!$.

This is Bernstein's characterization of absolutely monotone functions. All the
model operators \eqref{eq:model} are of this type, because
$(1-z^{q})^{-\alpha}$ has nonnegative Taylor coefficients for every
$\alpha>0$, $q\in\N$. On the other hand:

\medskip
The \emph{Euclidean} mean curvature operator, $\mathcal
A(t)=(1+t^{2})^{-1/2}$, is \emph{not} of this form: its Taylor coefficients
have alternating signs. Thus the family \eqref{eq:main} interpolates between
the Laplacian and Lorentzian, gradient constrained operators, and not between
the Laplacian and the minimal surface operator. This is consistent with the
fact that the Euclidean operator has a bounded flux with $\sg=+\infty$, a
combination which \eqref{eq:H} excludes; see however Remark
\ref{rem:euclcheck}.
 \medskip 

\begin{remark}\label{rem:noninteger}
Nothing in the sequel uses that the exponents are integers. All the results
below hold, with identical proofs, for operators of the form
$-\sum_{j}a_{j}\Delta_{p_{j}}u$ with $2\le p_{1}<p_{2}<\cdots\to\infty$ real, undes suitable compatilitity assumptions (e.g. $\log j = o(p_j)$), 
upon replacing \eqref{eq:sigma} by
$\sg^{-1}=\limsup_{j}a_{j}^{1/p_{j}}$, aand even for continuous superpositions
\[
-\int_{2}^{\infty}\Delta_{p}u\,d\mu(p)=-\dive\Big(\Big[\int_{2}^{\infty}|\nb
u|^{p-2}d\mu(p)\Big]\nb u\Big)
\]
with $\mu$ a nonnegative measure on $[2,\infty)$ such that $\mu(\{2\})>0$, upon
replacing \eqref{eq:sigma} by
$$\sg:=\sup\{t\ge0:\ \int_{2}^{\infty}t^{p}\,d\mu(p)<\infty\}.$$  The class of coefficients $\mathcal A$ so obtained, namely the functions
$t\mapsto\int_{2}^{\infty}t^{p-2}\,d\mu(p)$ (Mellin transforms of nonnegative
measures), is strictly larger than the one satisfying Bernstein's criterion, and
contains for instance
\[
\mathcal A(t)=1+t^{r}(1-t^{q})^{-\alpha},\qquad r>0\ \text{noninteger},\ q\in\N,\ \alpha>0,
\]
which corresponds to $\mu=\delta_{2}+\sum_{k\ge0}c_{k}\,\delta_{2+r+qk}$, with
$c_{k}\ge0$ the Taylor coefficients of $(1-z)^{-\alpha}$, and for which $\sg=1$
and $\mathcal A(0)=\mu(\{2\})=1>0$.

\smallskip 
Superpositions of infinitely many operators have recently been the object of a
systematic study in the fractional setting, where it is the \emph{order} of the
operators, rather than their growth exponent, that varies: see \cite{DPSV} for
the superposition $\int_{[0,1]}(-\Delta)^{s}u\,d\mu(s)$ of fractional
Laplacians modulated by a finite measure, \cite{DPLSV} for maximum principles
and spectral analysis, also in the presence of a signed measure, and
\cite{DPLSV2} for the $(s,p)$-superposition of fractional $p$-Laplacians, which
is the closest in spirit to the present paper. The mechanism there is, however,
structurally different from ours: the superposition is governed by the
dominance of the highest order appearing in the support of $\mu$, and no
phenomenon of the type of the radius of convergence \eqref{eq:sigma}{, hence
no gradient constraint, occurs}.
\end{remark}

\subsection{The constraint, the admissible set and the notion of solution}

The following elementary proposition is the reason why the problem is a
constrained one; it is the exact counterpart of \cite[Proposition 2.3]{P2010},
with $u\le\sg$ replaced by $|\nb u|\le\sg$.

\begin{proposition}\label{prop:constraint}
Let $u\in W^{1,1}_{\rm loc}(\Om)$ be such that
$\sum_{p\ge2}a_{p}\int_{\Om}|\nb u|^{p}<\infty$. Then $\|\nb
u\|_{L^{\infty}(\Om)}\le\sg$.
\end{proposition}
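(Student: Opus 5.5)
We argue by contradiction, using only the definition \eqref{eq:sigma} of $\sg$ and the fact that the summability hypothesis makes the series of integrals finite. Since every term is nonnegative, the hypothesis gives, for every $p$ with $a_p>0$,
\[
a_p\int_\Om|\nb u|^p\le M:=\sum_{q\ge2}a_q\int_\Om|\nb u|^q<\infty .
\]
If $\sg=+\infty$ there is nothing to prove. So assume $\sg<\infty$, and suppose that $\|\nb u\|_{L^\infty(\Om)}>\sg$. Then there exist $\tau>\sg$ and a measurable set $E\subset\Om$ with $|E|>0$ on which $|\nb u|\ge\tau$.

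Restricting the integral to $E$ gives the key estimate
\[
a_p\,\tau^p\,|E|\le a_p\int_E|\nb u|^p\le M\qquad\text{for every }p\ge2 .
\]
Taking $p$-th roots yields $a_p^{1/p}\le M^{1/p}\,|E|^{-1/p}\,\tau^{-1}$. Both $M^{1/p}$ and $|E|^{-1/p}$ tend to $1$ as $p\to\infty$; this needs $M>0$, which holds because $a_2>0$ and $\nb u\neq0$ on $E$. Hence
\[
\limsup_{p\to\infty}a_p^{1/p}\le\tau^{-1}<\sg^{-1},
\]
which contradicts \eqref{eq:sigma}. Therefore $|\nb u|\le\sg$ almost everywhere. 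If $\sg=0$, the same argument with any $\tau>0$ contradicts $\limsup_p a_p^{1/p}=+\infty$, so $\nb u=0$ almost everywhere.

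There is no real obstacle. The only points needing care are the following.
\begin{itemize}
\item The hypothesis controls each term separately, by nonnegativity of the $a_p$.
\item The set $E$ must have positive measure; this is exactly what $\|\nb u\|_{L^\infty(\Om)}>\sg$ means.
\item The factors $M^{1/p}$ and $|E|^{-1/p}$ disappear in the limit. This requires $|E|<\infty$, which holds since $\Om$ is bounded; it also follows from $M<\infty$.
\end{itemize}
The local integrability assumption $u\in W^{1,1}_{\rm loc}(\Om)$ is used only to give meaning to $\nb u$.
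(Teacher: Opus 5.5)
Your proof is correct and uses the same mechanism as the paper's. The paper bounds $\|\nb u\|_{L^{p_j}(\Om)}\le (C/a_{p_j})^{1/p_j}$ along a subsequence realizing the $\limsup$ in \eqref{eq:sigma} and then invokes $\|\nb u\|_{L^{p}(\Om)}\to\|\nb u\|_{L^{\infty}(\Om)}$; you instead argue by contradiction and use the Chebyshev bound $a_p\tau^p|E|\le M$ on a superlevel set, which is exactly the elementary half of that limit that is needed, and this also handles $\sg=0$ and $\sg=\infty$ explicitly without selecting a subsequence.
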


\begin{proof}
Let $C=\sum_{p\ge2}a_{p}\int_{\Om}|\nb u|^{p}<\infty$ denote the value of the series. For every $p$ with $a_{p}>0$ we have
$\|\nb u\|_{L^{p}(\Om)}\le(C/a_{p})^{1/p}$. Choose a subsequence $p_{j}\to
\infty$ realizing the $\limsup$ in \eqref{eq:sigma}, i.e.\
$a_{p_{j}}^{1/p_{j}}\to1/\sg$. Then
$\|\nb u\|_{L^{p_{j}}(\Om)}\le C^{1/p_{j}}a_{p_{j}}^{-1/p_{j}}\to\sg$, and
since $\|\nb u\|_{L^{p}(\Om)}\to\|\nb u\|_{L^{\infty}(\Om)}$ as $p\to\infty$, the conclusion follows.
\end{proof}

Let now $g\in C^{0,1}(\partial\Om)$ and let
\begin{equation}\label{eq:L}
L:=\Lip(g,\partial\Om)=\sup_{x\neq y\in\partial\Om}\frac{|g(x)-g(y)|}{|x-y|} .
\end{equation}
The McShane extension $\bar g(x)=\min_{y\in\partial\Om}\{g(y)+L|x-y|\}$
satisfies $\bar g=g$ on $\partial\Om$ and $\Lip(\bar g,\R^{N})=L$; one may
also use the absolutely minimizing Lipschitz extension of \cite{ACJ04}. We set
$S_{g}$ as the convex set
\begin{equation}\label{eq:Sg}
S_{g}:=\Big\{v\in C^{0,1}(\overline\Om):\ \|\nb v\|_{L^{\infty}(\Om)}\le\sg,
\quad v=g\ \text{on }\partial\Om\Big\},
\end{equation}
which is nonempty as soon as $L\le\sg$, and we observe that for
every $v\in S_{g}$
\begin{equation}\label{eq:Linfbound}
\|v\|_{L^{\infty}(\Om)}\le M_{0}:=\|g\|_{L^{\infty}(\partial\Om)}+\sg\,
d_{\Om},
\end{equation}
$d_{\Om}$ being the intrinsic diameter of $\Om$. Compatibility of the boundary
datum with the constraint, i.e.\ $L\le\sg$, is a genuine restriction; in the
Minkowski setting ($\sg=1$) it is precisely the requirement that  guarantees that $g$ admits a weakly spacelike extension, cf.\ \cite{BS1982}.

\begin{definition}\label{def:weak}
A function $u\in S_{g}$ is a \emph{weak solution} of \eqref{eq:main} if
\begin{equation}\label{eq:weak}
\sum_{p=2}^{\infty}a_{p}\int_{\Om}|\nb u|^{p-2}\nb u\cdot\nb\psi
=\int_{\Om}f\psi,
\end{equation}
\text{for every }$\psi\in W^{1,\infty}_{0}(\Om)$ such that all the terms are absolutely convergent.
\end{definition}

\begin{remark}{\label{rem:quantifier}
The class of test functions in Definition \ref{def:weak} depends on $u$ itself.
This is deliberate: by Remark \ref{rem:welldef} below the restriction is empty
both when $\|\nb u\|_{\infty}<\sg$ and when $\Lam<\infty$, which are the only two
situations in which Definition \ref{def:weak} is used in the sequel, while in the
intermediate regime (H2) it avoids requiring a convergence that no a priori
estimate guarantees.}
\end{remark}

\begin{remark}\label{rem:welldef}
If $\|\nb u\|_{\infty}\le\gamma<\sg$ then \eqref{eq:weak} is automatically
absolutely convergent, since
$\sum_{p}a_{p}\int|\nb u|^{p-1}|\nb\psi|\le|\Om|\,\|\nb\psi\|_{\infty}\varphi(\gamma)<\infty$.
If $\Lam<\infty$, then \eqref{eq:weak} is absolutely convergent for every
$u\in S_{g}$, because $|\aaa(\nb u)|=\varphi(|\nb u|)\le\Lam$ a.e. It is only
in the regime (H2), $\Phi(\sg)<\infty=\Lam$, that the left hand side of
\eqref{eq:weak} may fail to make sense for some $u\in S_{g}$.
\end{remark}

\section{The variational problem}\label{sec:var}

Throughout this section $f\in L^{1}(\Om)$ and $g\in C^{0,1}(\partial\Om)$ with
$L\le\sg$. We consider the energy
\begin{equation}\label{eq:F}
F(v)=\int_{\Om}G(\nb v)-\int_{\Om}fv
=\sum_{p=2}^{\infty}\frac{a_{p}}{p}\int_{\Om}|\nb v|^{p}-\int_{\Om}fv,
\qquad v\in S_{g},
\end{equation}
whose formal Euler--Lagrange equation is precisely \eqref{eq:main}. Because of
\eqref{eq:Linfbound}, the linear part of $F$ is finite for every $f\in
L^{1}(\Om)$ (indeed for every Radon measure), and $F$ is bounded from below, i.e. 
\begin{equation}\label{eq:below}
F(v)\ \ge\ -\|f\|_{L^{1}(\Om)}M_{0}\qquad\text{for every }v\in S_{g}.
\end{equation}
  The only nontrivial requirement is
that $F$ be not identically $+\infty$ on $S_{g}$, that is 

\begin{assumption}\label{ass:A}
There exists $\bar g\in S_{g}$ with $F(\bar g)<+\infty$, i.e.\ with
$\int_{\Om}\Phi(|\nb\bar g|)<+\infty$.
\end{assumption}

\begin{proposition}\label{prop:Aholds}
Assumption \ref{ass:A} holds in each of the following cases:
\begin{enumerate}
\item[(i)] $L<\sg$ (any Lipschitz extension of $g$ with Lipschitz constant $L$ will do the job, since
then $\int_{\Om}\Phi(|\nb\bar g|)\le|\Om|\Phi(L)<\infty$);
\item[(ii)] $L\le\sg$ and $\Phi(\sg)<\infty$ (regimes (H2) and (H3));
\item[(iii)] $L=\sg$, $\Phi(\sg)=\infty$, and there exist an extension
$\bar g$ of $g$ with $\|\nb\bar g\|_{\infty}\le\sg$, a modulus
$\omega$ with $|\{|\nb\bar g|>\delta\}|\le\omega(\sg-\delta)$, and a sequence
$\delta_{p}\uparrow\sg$ such that
\begin{equation}\label{eq:deltap}
\sum_{p\ge2}\frac{a_{p}}{p}\Big[\delta_{p}^{p}\,|\Om|
+\sg^{p}\,\omega(\sg-\delta_{p})\Big]<\infty .
\end{equation}
\end{enumerate}
\end{proposition}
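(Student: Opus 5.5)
The three cases are checked separately. In each one I exhibit an explicit $\bar g\in S_g$ and bound $\int_\Om\Phi(|\nb\bar g|)$ using only that $\Phi$ is nondecreasing on $[0,\sg]$ (Lemma \ref{lem:elementary}(ii), with the value at $\sg$ given by monotone convergence as in \eqref{eq:LamPhi}). The linear term $\int_\Om f\bar g$ is finite in every case by \eqref{eq:Linfbound}, since $f\in L^1(\Om)$. So it suffices to show $\int_\Om\Phi(|\nb\bar g|)<\infty$.

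\emph{Case (i).} Let $\bar g$ be the McShane extension restricted to $\overline\Om$. Then $\bar g=g$ on $\partial\Om$ and $|\nb\bar g|\le L<\sg$ a.e., so $\bar g\in S_g$. Monotonicity of $\Phi$ gives $\int_\Om\Phi(|\nb\bar g|)\le|\Om|\Phi(L)$, which is finite because $L$ lies strictly inside the radius of convergence.

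\emph{Case (ii).} The same McShane extension works. Now $|\nb\bar g|\le L\le\sg$ a.e., so $\bar g\in S_g$ (nonempty since $L\le\sg$), and $\int_\Om\Phi(|\nb\bar g|)\le|\Om|\Phi(\sg)<\infty$ by hypothesis. That $\Phi(\sg)<\infty$ in both (H2) and (H3) follows from Lemma \ref{lem:elementary}(iv).

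\emph{Case (iii).} This is the only case needing care, since $\Phi(\sg)=\infty$ and the uniform bound is useless. Take the given extension $\bar g$. By Tonelli, applied to the nonnegative series,
\[
\int_\Om\Phi(|\nb\bar g|)=\sum_{p\ge2}\frac{a_p}{p}\int_\Om|\nb\bar g|^p .
\]
For each $p$, split $\Om$ into $\{|\nb\bar g|\le\delta_p\}$ and $\{|\nb\bar g|>\delta_p\}$. On the first set the integrand is at most $\delta_p^p$, contributing at most $\delta_p^p|\Om|$. On the second set $|\nb\bar g|\le\sg$, and its measure is at most $\omega(\sg-\delta_p)$ by the modulus hypothesis, contributing at most $\sg^p\,\omega(\sg-\delta_p)$. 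Summing over $p$ bounds the energy by the series in \eqref{eq:deltap}, which is finite.

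The main subtlety is exactly this step: the threshold $\delta_p$ must be allowed to depend on $p$. One cannot first split the domain and then sum the series on each piece, because the series $\sum_p\frac{a_p}{p}\sg^p$ diverges. Interchanging sum and integral via Tonelli before splitting is what makes the $p$-dependent split legitimate.
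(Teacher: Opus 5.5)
Your proposal is correct and follows the paper's own argument. Cases (i) and (ii) follow from the monotonicity of $\Phi$ applied to a rank-$L$ extension. Case (iii) uses the same termwise expansion of $\Phi$ followed by the $p$-dependent splitting of $\Omega$ into $\{|\nabla\bar g|>\delta_p\}$ and its complement.
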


\begin{proof}
Only (iii) requires a comment: splitting $\Om$ into
$A_{p}=\{|\nb\bar g|>\delta_{p}\}$ and its complement, we have
\[
\int_{\Om}\Phi(|\nb\bar g|)
=\sum_{p}\frac{a_{p}}{p}\Big[\int_{A_{p}}|\nb\bar g|^{p}
+\int_{\Om\setminus A_{p}}|\nb\bar g|^{p}\Big]
\le\sum_{p}\frac{a_{p}}{p}\big[\sg^{p}|A_{p}|+\delta_{p}^{p}|\Om|\big],
\]
which is finite by \eqref{eq:deltap}.
\end{proof}

\begin{remark}\label{rem:Afails}
Assumption \ref{ass:A} may genuinely fail in the borderline case $L=\sg$,
$\Phi(\sg)=\infty$. Take $N=1$, $\Om=(0,1)$, $g(0)=0$, $g(1)=\sg$: then
$L=\sg$ and $S_{g}$ reduces to the single function $v(x)=\sg x$, for which
$F(v)=\Phi(\sg)-\int fv=+\infty$. Hence no minimizer exists. This shows that
some hypothesis of the type of Proposition \ref{prop:Aholds}(iii) is
unavoidable, and that the geometry of the set where the extension saturates
the constraint is what matters.
\end{remark}

\begin{theorem}\label{thm:min}
Let \eqref{eq:H} hold with $0<\sg<\infty$, let $f\in L^{1}(\Om)$ and let
Assumption \ref{ass:A} be satisfied. Then $F$ has a unique minimizer $u$ in
$S_{g}$.
\end{theorem}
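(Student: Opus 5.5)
We argue by the direct method of the calculus of variations on the convex set $S_{g}$. By Assumption \ref{ass:A} the infimum $m:=\inf_{S_{g}}F$ satisfies $m\le F(\bar g)<\infty$, and by \eqref{eq:below} we have $m\ge-\|f\|_{L^{1}(\Om)}M_{0}>-\infty$. We fix a minimizing sequence $\{v_{n}\}\subset S_{g}$ with $F(v_{n})\to m$. Every $v_{n}$ is $\sg$-Lipschitz on each segment contained in $\Om$ and satisfies the uniform bound \eqref{eq:Linfbound}. Since $\Om$ is a bounded Lipschitz domain, the bound $\|\nb v_{n}\|_{\infty}\le\sg$ gives a uniform Lipschitz (hence equicontinuous) bound on $\overline\Om$, with constant depending on $\Om$. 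Ascoli--Arzel\`a then provides a subsequence with $v_{n}\to u$ uniformly on $\overline\Om$. Moreover $\nb v_{n}\rightharpoonup^{*}\nb u$ weakly-$*$ in $L^{\infty}(\Om)$, because the gradients are bounded in $L^\infty$ and their distributional limit is $\nb u$.

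We next check that $u\in S_{g}$. Uniform convergence preserves the boundary values, so $u=g$ on $\partial\Om$. The ball $\{\|\xi\|_{L^{\infty}}\le\sg\}$ is weakly-$*$ closed (it is convex and strongly closed), so $\|\nb u\|_{\infty}\le\sg$.

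Lower semicontinuity of $F$ comes in two parts. The linear part converges: $v_{n}\to u$ uniformly and $f\in L^{1}(\Om)$, so $\int_{\Om} fv_{n}\to\int_{\Om} fu$. For the energy part, $G$ is convex, lower semicontinuous and nonnegative (Lemma \ref{lem:elementary} and the discussion after \eqref{eq:G}). Hence $\xi\mapsto\int_{\Om}G(\xi)$ is convex and strongly lower semicontinuous on $L^{1}(\Om;\R^{N})$, the latter by Fatou along a.e.\ convergent subsequences. It is therefore weakly lower semicontinuous in $L^{1}$, and a fortiori along $\nb v_{n}\rightharpoonup^{*}\nb u$. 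An alternative that avoids the abstract argument is to use the termwise representation $\int_{\Om} G(\nb v)=\sum_{p}\frac{a_{p}}{p}\int_{\Om}|\nb v|^{p}$: each $L^{p}$ norm is weakly lower semicontinuous, and Fatou's lemma for series (all terms nonnegative) handles the infinite sum. Either way $F(u)\le\liminf F(v_{n})=m$, so $u$ is a minimizer.

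For uniqueness, suppose $u_{1},u_{2}\in S_{g}$ are both minimizers. Then $F(u_{i})=m<\infty$, so $|\nb u_{i}|\le\sg$ a.e. and $\Phi(|\nb u_{i}|)<\infty$ a.e. The midpoint $w=(u_{1}+u_{2})/2$ lies in $S_{g}$ by convexity, and by convexity of $G$ it satisfies $F(w)\le m$. Equality must then hold in $\int G(\nb w)\le\frac12\int G(\nb u_{1})+\frac12\int G(\nb u_{2})$, i.e.\ pointwise a.e.

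The main obstacle is strict convexity of $G$, which the paper states only on the open ball $\{|\xi|<\sg\}$, whereas the gradients may lie on the sphere $|\xi|=\sg$. We handle this by a direct argument. $\Phi$ is strictly convex and increasing on $[0,\sg]$: it extends continuously to $\sg$ in regimes (H2) and (H3), and is finite there when $\Phi(\sg)<\infty$. The norm is convex. Suppose $G\bigl(\tfrac{\xi+\eta}{2}\bigr)=\tfrac12 G(\xi)+\tfrac12 G(\eta)$ with $|\xi|,|\eta|\le\sg$. Then the chain
\[
\Phi\Bigl(\Bigl|\tfrac{\xi+\eta}2\Bigr|\Bigr)\le\Phi\Bigl(\tfrac{|\xi|+|\eta|}2\Bigr)\le\tfrac12\Phi(|\xi|)+\tfrac12\Phi(|\eta|)
\]
must consist of equalities. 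Strict convexity of $\Phi$ forces $|\xi|=|\eta|$. Since $\Phi$ is strictly increasing, the first equality forces $|\xi+\eta|=|\xi|+|\eta|$, so $\xi,\eta$ are parallel with the same orientation. Together these give $\xi=\eta$. In regime (H1) the sphere carries infinite energy, so $|\nb u_{i}|<\sg$ a.e.\ and the paper's strict convexity suffices. Hence $\nb u_{1}=\nb u_{2}$ a.e., and since $u_{1}=u_{2}$ on $\partial\Om$ we conclude $u_{1}=u_{2}$.
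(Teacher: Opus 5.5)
Your proof is correct and follows the same route as the paper. The existence part is the direct method: Ascoli--Arzel\`a, weak-$*$ convergence of the gradients, and lower semicontinuity via termwise weak lower semicontinuity of $\int_\Om|\nb v|^p$ plus Fatou in $p$ (your abstract convex-l.s.c.\ alternative also works). Uniqueness comes from the chain $\Phi(|\tfrac{\xi+\eta}{2}|)\le\Phi(\tfrac{|\xi|+|\eta|}{2})\le\tfrac12\Phi(|\xi|)+\tfrac12\Phi(|\eta|)$, and your explicit treatment of the sphere $|\xi|=\sg$ spells out what the paper asserts in one line.

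One small slip: convexity plus norm-closedness does not by itself give weak-$*$ closedness in $L^\infty$. The conclusion $\|\nb u\|_\infty\le\sg$ still holds, because $\{|\xi|\le\sg \text{ a.e.}\}$ is a closed ball of a dual norm; equivalently, you can pass through weak convergence in $L^1$, as you do later.
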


\begin{proof}
  By \eqref{eq:below} and Assumption \ref{ass:A} we have
$-\infty<\inf_{S_{g}}F\le F(\bar g)<+\infty$. Let $\{u_{n}\}\subset S_{g}$ be
a minimizing sequence. Since $\|\nb u_{n}\|_{\infty}\le\sg$ and, by
\eqref{eq:Linfbound}, $\|u_{n}\|_{\infty}\le M_{0}$, the sequence is bounded
and equi-Lipschitz; by the Ascoli--Arzel\`a theorem, up to a subsequence,
$u_{n}\to u$ uniformly on $\overline\Om$ and $\nb u_{n}\rightharpoonup\nb u$
weakly-$*$ in $L^{\infty}(\Om;\R^{N})$. Consequently $u=g$ on $\partial\Om$
and $\|\nb u\|_{\infty}\le\liminf\|\nb u_{n}\|_{\infty}\le\sg$, that is $u\in
S_{g}$. For every fixed $p$, by weak lower semicontinuity of the convex
functional $v\mapsto\int_{\Om}|\nb v|^{p}$,
\[
\int_{\Om}|\nb u|^{p}\le\liminf_{n}\int_{\Om}|\nb u_{n}|^{p} ,
\]
so that, by Fatou's lemma applied with respect to the counting measure in the
variable $p$,
\[
\sum_{p}\frac{a_{p}}{p}\int_{\Om}|\nb u|^{p}
\le\sum_{p}\liminf_{n}\frac{a_{p}}{p}\int_{\Om}|\nb u_{n}|^{p}
\le\liminf_{n}\sum_{p}\frac{a_{p}}{p}\int_{\Om}|\nb u_{n}|^{p} .
\]
Since $u_{n}\to u$ uniformly and $f\in L^{1}$, we get $\int fu_{n}\to\int fu$
and therefore $F(u)\le\liminf_{n}F(u_{n})=\inf_{S_{g}}F$.

Uniqueness follows by contradiction from the fact that the map
$\xi\mapsto G(\xi)$ is strictly convex on $\{|\xi|\le\sg\}$: for $\xi\neq\eta$
one has $|\tfrac{\xi+\eta}{2}|\le\tfrac{|\xi|+|\eta|}{2}$, with strict
inequality unless $\xi$ and $\eta$ are parallel and equally oriented, and one
concludes by the strict convexity and the strict monotonicity of $\Phi$. Two
distinct minimizers of finite energy would then satisfy
$F(\tfrac{u+v}{2})<\inf_{S_{g}}F$.
\end{proof}

\begin{remark}
The proof used neither the sign of $f$ nor any summability beyond $L^{1}$, and
it did not use Assumption \ref{ass:A} except to guarantee
$\inf_{S_{g}}F<\infty$; in particular it covers all three regimes
\eqref{eq:trichotomy} simultaneously. Compare with  \cite{BS1982}, where the variational problem is set among weakly spacelike functions, while smooth solutions are obtained in  \cite[Theorem 3.6]{BS1982} under a strictly spacelike extension of the datum. 
\end{remark}

\subsection{The variational inequality}

The next results aim to  give a  general statement relating  the minimizer to
problem \eqref{eq:main} and a variational inequality that  holds in all regimes and requires no a priori
estimate whatsoever. First we need the following technical lemma:  

\begin{lemma}[the pointwise directional derivative]\label{lem:dirder}
Let $\xi,\zeta\in\R^{N}$ with $|\xi|\le\sg$, $|\zeta|\le\sg$ and $G(\xi)<\infty$ (where $G$ is defined in \eqref{eq:G}),
and set $\eta=\zeta-\xi$. Then $t\mapsto t^{-1}\big[G(\xi+t\eta)-G(\xi)\big]$ is
nondecreasing on $(0,1]$ and its limit as $t\downarrow0$, which we denote by
$\delta(\xi,\zeta)\in[-\infty,+\infty)$, is given by
\[
\delta(\xi,\zeta)=
\begin{cases}
\aaa(\xi)\cdot\eta, & |\xi|<\sg,\\[1mm]
\Lam\,\sg^{-1}\,\xi\cdot\eta, & |\xi|=\sg,\ \Lam<\infty,\\[1mm]
0, & |\xi|=\sg,\ \Lam=\infty,\ \zeta=\xi,\\[1mm]
-\infty, & |\xi|=\sg,\ \Lam=\infty,\ \zeta\neq\xi.
\end{cases}
\]
Moreover $2\,\xi\cdot\eta\le-|\eta|^{2}$ when $|\xi|=\sg$, so that
$\delta(\xi,\zeta)\le0$ there.
\end{lemma}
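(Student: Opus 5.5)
Since $G$ is convex on $\R^N$ with values in $[0,+\infty]$ and $G(\xi)<\infty$, and since $\xi+t\eta=(1-t)\xi+t\zeta$ lies in the closed ball $\{|\cdot|\le\sg\}$ for $t\in[0,1]$, the function $h(t)=G(\xi+t\eta)$ is convex on $[0,1]$. It is finite for $t<1$: if $|\xi+t\eta|<\sg$ this is clear, and in the degenerate case where the whole segment stays on the sphere we must have $\zeta=\xi$ by strict convexity of the ball. Monotonicity of the difference quotient $t^{-1}[h(t)-h(0)]$ is then the standard slope property of convex functions, and it gives existence of the limit $\delta(\xi,\zeta)\in[-\infty,+\infty)$. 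It is bounded above by $h(1)-h(0)$, which may be $+\infty$ only at $t=1$, but the limit is certainly $<+\infty$ from any $t<1$.

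I would first prove the final inequality, since it drives the boundary cases. If $|\xi|=\sg\ge|\zeta|$, then $|\xi+\eta|^2\le|\xi|^2$ expands to $2\xi\cdot\eta+|\eta|^2\le0$. Hence $|\xi+t\eta|^2=\sg^2+2t\xi\cdot\eta+t^2|\eta|^2\le\sg^2-t(1-t)|\eta|^2<\sg^2$ for $t\in(0,1)$ whenever $\eta\ne0$. So the open segment lies in the region where $G=\Phi(|\cdot|)$ is smooth.

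Case $|\xi|<\sg$: $G$ is differentiable at $\xi$ with gradient $\aaa(\xi)$ by \eqref{eq:field} (a convergent power series in $|\xi|$, smooth inside the disk), so $\delta=\aaa(\xi)\cdot\eta$.

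Case $|\xi|=\sg$, $\zeta=\xi$: trivially $\delta=0$.

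Case $|\xi|=\sg$, $\eta\neq0$: set $r(t)=|\xi+t\eta|$, so $r(t)<\sg$ for $t\in(0,1)$ and $r(t)\to\sg$. Write
\[
\frac{\Phi(r(t))-\Phi(\sg)}{t}=\frac{\Phi(r(t))-\Phi(\sg)}{r(t)-\sg}\cdot\frac{r(t)-\sg}{t}.
\]
The second factor tends to $r'(0^+)=\sg^{-1}\xi\cdot\eta$, which is $\le -|\eta|^2/(2\sg)<0$. For the first factor, the mean value theorem together with monotonicity of $\varphi$ and Abel's theorem ($\varphi(s)\uparrow\Lam$ as $s\uparrow\sg$) show that it tends to $\Phi'(\sg^-)=\Lam$. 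One uses that $\Phi$ is continuous up to $\sg$ from below, which holds by monotone convergence since $\Phi(\sg)<\infty$ here. If $\Lam<\infty$ the product tends to $\Lam\sg^{-1}\xi\cdot\eta$. If $\Lam=\infty$ it is $(+\infty)\cdot(\text{negative})=-\infty$. The strict negativity of $\xi\cdot\eta$ coming from the preliminary inequality is exactly what makes the $\Lam=\infty$ case legitimate rather than an indeterminate $\infty\cdot0$.

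The main obstacle is this boundary case: one must justify the one-sided derivative of $\Phi$ at $\sg$ from the left, including the possibly infinite value, and ensure $r(t)\ne\sg$ so that the quotient is defined. Both points are handled by the preliminary inequality and the monotone (Abel) limit of $\varphi$. Finally, $\delta\le0$ on $|\xi|=\sg$ follows from $\xi\cdot\eta\le0$ in each of the three boundary formulas.
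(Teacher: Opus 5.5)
Your proof is correct and follows essentially the same route as the paper. Monotonicity of the quotient comes from convexity of $G$, and inside the ball $G$ is differentiable with $\nb G=\aaa$. On the sphere, the inequality $2\xi\cdot\eta\le-|\eta|^{2}$ keeps the open segment in $\{|\cdot|<\sg\}$, and this is combined with $\Phi'(\sg^{-})=\Lam$. The only difference is that you handle both boundary subcases at once through the factorization with $r'(0^{+})=\sg^{-1}\xi\cdot\eta<0$. The paper instead uses the chain rule when $\Lam<\infty$, and when $\Lam=\infty$ the one-sided bound $|\xi+t\eta|\le\sg-ct/2$ together with monotonicity of $\Phi$.
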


\begin{proof}
Monotonicity of the difference {quotient} readily follows by  convexity of $G$. If $|\xi|<\sg$ then
$G$ is differentiable at $\xi$ with $\nb G(\xi)=\aaa(\xi)$, whence the first
case. Let $|\xi|=\sg$; from $|\zeta|^{2}=|\xi|^{2}+2\xi\cdot\eta+|\eta|^{2}
\le\sg^{2}=|\xi|^{2}$ we get $2\xi\cdot\eta+|\eta|^{2}\le0$, and therefore, for
$t\in(0,1)$,
\[
|\xi+t\eta|^{2}=\sg^{2}+t\big(2\xi\cdot\eta+t|\eta|^{2}\big)
\le\sg^{2}-t(1-t)|\eta|^{2},
\qquad\text{so}\qquad
|\xi+t\eta|\le\sg-\frac{t(1-t)|\eta|^{2}}{2\sg} .
\]
In particular $|\xi+t\eta|<\sg$ for $0<t<1$ and $\eta\neq0$, so that
$G(\xi+t\eta)=\Phi(|\xi+t\eta|)$ and the quotient is
$t^{-1}[\Phi(|\xi+t\eta|)-\Phi(\sg)]$. If $\Lam<\infty$ then
$\Phi\in C^{1}([0,\sg])$ with $\Phi'(\sg)=\Lam$, and the chain rule gives
$\delta=\Lam\,\frac{d}{dt}|\xi+t\eta|\big|_{t=0^{+}}=\Lam\,\sg^{-1}\xi\cdot\eta$.
If $\Lam=\infty$ and $\eta\neq0$, put $c=|\eta|^{2}/(2\sg)>0$; for $t\le\frac12$
the previous  bound gives $|\xi+t\eta|\le\sg-ct/2$, hence
\[
\frac{\Phi(|\xi+t\eta|)-\Phi(\sg)}{t}
\le-\frac{c}{2}\cdot\frac{\Phi(\sg)-\Phi(\sg-ct/2)}{ct/2}
\ \longrightarrow\ -\frac{c}{2}\,\Phi'(\sg^{-})=-\infty .
\]
The case $\eta=0$ is trivial.
\end{proof}

\begin{theorem}\label{thm:VI}
Under the assumptions of Theorem \ref{thm:min}, let $u$ be the minimizer and set
$\Sigma=\{x\in\Om:\ |\nb u(x)|=\sg\}$. Let $v\in S_{g}$ with $F(v)<\infty$ and
write $\delta_{v}(x)=\delta\big(\nb u(x),\nb v(x)\big)$ as in Lemma
\ref{lem:dirder}. Then:
\begin{enumerate}
\item[(i)] if $\Lam=\infty$, then $\nb v=\nb u$ a.e.\ on $\Sigma$;
\item[(ii)] $\delta_{v}^{-}\in L^{1}(\Om)$ and
$\displaystyle\int_{\Om}\delta_{v}\ \ge\ \int_{\Om}f\,(v-u)$;
\item[(iii)] the partial sums of the series in \eqref{eq:VI} converge in
$(-\infty,+\infty]$ to $\int_{\Om}\delta_{v}$, so that
\begin{equation}\label{eq:VI}
\sum_{p=2}^{\infty}a_{p}\int_{\Om}|\nb u|^{p-2}\nb u\cdot\nb(v-u)\ \ge\
\int_{\Om}f\,(v-u).
\end{equation}
\end{enumerate}
\end{theorem}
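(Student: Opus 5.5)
The idea is to compare $u$ with the convex combinations $u_t=u+t(v-u)=(1-t)u+tv$, $t\in(0,1]$. These belong to $S_{g}$, since $S_g$ is convex, and by convexity of $G$ they satisfy
\[
F(u_t)\le(1-t)F(u)+tF(v)<\infty .
\]
Minimality of $u$ gives $F(u_t)-F(u)\ge0$, that is,
\[
\int_\Om \frac{G(\nb u+t(\nb v-\nb u))-G(\nb u)}{t}\,dx\ \ge\ \int_\Om f(v-u)\,dx .
\]
Denote the integrand on the left by $q_t(x)$. By Lemma \ref{lem:dirder}, applied pointwise with $\xi=\nb u(x)$ and $\zeta=\nb v(x)$, we have $q_t\downarrow\delta_v$ as $t\downarrow0$. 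We have $G(\nb u)<\infty$ a.e. because $F(u)<\infty$. Moreover $q_1=G(\nb v)-G(\nb u)\in L^1$, since both energies are finite.

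\emph{Monotone convergence.} Since $q_t\le q_1$ and $q_t$ is monotone, the monotone convergence theorem applies to the decreasing family, with $q_1$ as an integrable upper bound. It gives
\[
\int_\Om q_t\to\int_\Om\delta_v\in[-\infty,\infty).
\]
Passing to the limit in the inequality above yields $\int_\Om\delta_v\ge\int_\Om f(v-u)>-\infty$, using $f\in L^1$ and the uniform bounds on $u$ and $v$. As $\delta_v\le q_1$, we get $\delta_v^+\in L^1$; combined with $\int\delta_v>-\infty$, this gives $\delta_v^-\in L^1$. This proves (ii).

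\emph{Part (i).} Assume $\Lam=\infty$. Lemma \ref{lem:dirder} gives $\delta_v=-\infty$ on the set $\{x\in\Sigma:\nb v\ne\nb u\}$. Since $\delta_v^-\in L^1$, this set must be null.

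\emph{Part (iii).} Write $\eta=\nb v-\nb u$ and
\[
S_n=\sum_{p\le n}a_p\int_\Om|\nb u|^{p-2}\nb u\cdot\eta .
\]
We need $S_n\to\int\delta_v$. We split $\Om$ into $\Om\setminus\Sigma$ and $\Sigma$ and argue on each piece.

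\emph{On $\Sigma$.}
\begin{itemize}
\item If $\Lam=\infty$, then $\eta=0$ a.e. on $\Sigma$ by (i), so every term vanishes there, and $\delta_v=0$ there as well.
\item If $\Lam<\infty$, the partial sums of $a_p|\nb u|^{p-2}\nb u\cdot\eta$ are dominated by $\Lam|\eta|\le2\sg\Lam$. By dominated convergence their integrals converge to $\int_\Sigma\Lam\sg^{-1}\nb u\cdot\eta=\int_\Sigma\delta_v$. The same domination handles the whole domain when $\Lam<\infty$.
\end{itemize}

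\emph{On $\Om\setminus\Sigma$ when $\Lam=\infty$ (main obstacle).} Here the pointwise partial sums converge to $\aaa(\nb u)\cdot\eta=\delta_v$, but no dominating function is available a priori. The key observation is a sign. Where $\nb u\cdot\eta\ge0$, the partial sums increase in $n$, and monotone convergence applies. Where $\nb u\cdot\eta<0$, they decrease, and each partial sum is bounded below by the full sum $\delta_v$, whose negative part is integrable by (ii). So monotone convergence applies again, from above, with an integrable lower bound.

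Adding the two pieces, the integrals converge to $\int\delta_v^+$-type and $-\int\delta_v^-$ contributions respectively. The sum converges in $(-\infty,+\infty]$, and in fact to the finite value $\int\delta_v$, since both pieces are controlled. Together with (ii), this gives \eqref{eq:VI}.

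The delicate point is making sure the termwise sum over $p$ can be exchanged with the integral without any Lipschitz bound below $\sg$. The splitting by the sign of $\nb u\cdot\eta$, together with the integrability of $\delta_v^-$ obtained in (ii), is exactly what resolves it.
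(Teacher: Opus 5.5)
Your proof is correct and follows essentially the same route as the paper: you use the difference quotients $q_t$ and monotone convergence against the integrable bound $q_1$ to get (ii), and (i) follows at once from $\delta_v^-\in L^1$. In (iii) you also split by the sign of $\nb u\cdot\nb(v-u)$; the only deviation is that you handle $\Lam<\infty$ separately by dominated convergence with bound $2\sg\Lam$, whereas the paper applies the sign-split monotone convergence uniformly in both regimes (on the negative part the partial sums lie between $\delta_v$ and $0$) and identifies the limit with $\delta_v$ on $\Sigma$ afterwards.
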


\begin{proof}
Write $w=v-u$ and $u_{t}=u+tw\in S_{g}$, $t\in[0,1]$.

\emph{Step 1 (minimality).} The function $t\mapsto F(u_{t})$ is convex on
$[0,1]$, finite at $t=0$ and $t=1$, and attains its minimum at $t=0$. Hence, for
every $t\in(0,1]$,
\begin{equation}\label{eq:step1}
\int_{\Om}Q_{t}\ \ge\ \int_{\Om}fw,\qquad
Q_{t}:=\frac{G(\nb u+t\nb w)-G(\nb u)}{t},
\end{equation}
where $\int_{\Om}fw$ is finite because $f\in L^{1}(\Om)$ and
$\|w\|_{\infty}\le2M_{0}$ by \eqref{eq:Linfbound}.

\emph{Step 2 (passage to the limit).} By Lemma \ref{lem:dirder}, applied
pointwise with $\xi=\nb u(x)$ and $\zeta=\nb v(x)$, the family $Q_{t}$ is
nondecreasing in $t$ and $Q_{t}\downarrow\delta_{v}$ a.e.\ as $t\downarrow0$;
moreover $Q_{t}\le Q_{1}=G(\nb v)-G(\nb u)\in L^{1}(\Om)$, since
$F(u),F(v)<\infty$. Applying the monotone convergence theorem to the
nonnegative nondecreasing family $Q_{1}-Q_{t}$ we obtain
\[
\int_{\Om}Q_{t}\ \longrightarrow\ \int_{\Om}\delta_{v}\ \in[-\infty,+\infty)
\qquad\text{as }t\downarrow0 .
\]

\emph{Step 3 (conclusion, and integrability of the negative part).} Passing to
the limit in \eqref{eq:step1} gives
$\int_{\Om}\delta_{v}\ge\int_{\Om}fw>-\infty$. Since
$\delta_{v}^{+}\le Q_{1}^{+}\in L^{1}(\Om)$, this is precisely the statement
that $\delta_{v}^{-}\in L^{1}(\Om)$, and (ii) is proved.  

\emph{Step 4 (proof of (i)).} If $\Lam=\infty$ and the set
$\{x\in\Sigma:\nb v(x)\neq\nb u(x)\}$ had positive measure, then
$\delta_{v}=-\infty$ there by Lemma \ref{lem:dirder}, whence
$\int_{\Om}\delta_{v}=-\infty$, contradicting (ii).

\emph{Step 5 (identification of the series).} All the terms of the series
vanish on $\{\nb u=0\}$. On $\{\nb u\neq0\}$ set
$s=|\nb u|^{-1}\,\nb u\cdot\nb w$, so that the $n$-th partial sum equals
$\int_{\Om}\varphi_{n}(|\nb u|)\,s$ with
$\varphi_{n}(t)=\sum_{p\le n}a_{p}t^{p-1}\uparrow\varphi(t)$. On $\{s\ge0\}$ the
integrands increase and on $\{s<0\}$ they decrease and are bounded above by $0$;
by monotone convergence on each of the two sets separately, the partial sums
converge to $\int_{\Om}\varphi(|\nb u|)\,s$, the limit being $>-\infty$ by (ii).
Finally, $\varphi(|\nb u|)s=\delta_{v}$ a.e.: this is clear off $\Sigma$, on
$\Sigma$ it is the second case of Lemma \ref{lem:dirder} when $\Lam<\infty$, and
when $\Lam=\infty$ one has $s=0$ a.e.\ on $\Sigma$ by (i), so that both sides
vanish there.
\end{proof}

\begin{corollary}\label{cor:VI2PDE}
Let $u$ be the minimizer. If $\|\nb u\|_{L^{\infty}(\Om)}<\sg$, then $u$ is a
weak solution of \eqref{eq:main} in the sense of Definition \ref{def:weak}.
Conversely, if $u\in S_{g}$ is a weak solution of \eqref{eq:main} and
$\int_{\Om}\aaa(\nb u)\cdot\nb(v-u)$ is well defined for every $v\in S_{g}$
with $F(v)<\infty$, then $u$ is the minimizer of $F$ on $S_{g}$.
\end{corollary}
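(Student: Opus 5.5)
For the first assertion I take $\|\nb u\|_\infty=\gamma<\sg$ and fix $\psi\in W^{1,\infty}_0(\Om)$. By Remark \ref{rem:welldef} every series in \eqref{eq:weak} converges absolutely, so there is no restriction on the test functions. I then set $v_{\pm}=u\pm\eps\psi$ with $0<\eps\le(\sg-\gamma)/\|\nb\psi\|_\infty$ (any $\eps>0$ if $\nb\psi=0$). These satisfy $\|\nb v_\pm\|_\infty\le\sg$ and $v_\pm=g$ on $\partial\Om$, so $v_\pm\in S_g$.

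Next I check that $F(v_\pm)<\infty$. This needs $\Phi(\sg)<\infty$ when $\eps$ reaches the maximal value. To avoid the issue I take $\eps$ strictly smaller, say $\eps\|\nb\psi\|_\infty\le(\sg-\gamma)/2$. Then $|\nb v_\pm|\le(\sg+\gamma)/2<\sg$, which gives $F(v_\pm)\le|\Om|\Phi((\sg+\gamma)/2)+\|f\|_1M_0<\infty$.

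Applying Theorem \ref{thm:VI}(iii) to $v_+$ and to $v_-$, and dividing by $\eps$, gives
\[
\pm\sum_p a_p\int_\Om|\nb u|^{p-2}\nb u\cdot\nb\psi\ \ge\ \pm\int_\Om f\psi .
\]
The sum is linear in $\nb(v-u)=\pm\eps\nb\psi$, and this linearity is legitimate because the series converges absolutely. The two inequalities together give \eqref{eq:weak}.

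For the converse I let $u\in S_g$ be a weak solution and take $v\in S_g$ with $F(v)<\infty$. I first have to check that $F(u)<\infty$. In regimes (H2) and (H3) this is automatic because $\Phi(\sg)<\infty$. In regime (H1), $\Lam=\infty$, and I would argue as follows. Weak solutions are implicitly of finite energy, and the hypothesis that $\int\aaa(\nb u)\cdot\nb(v-u)$ is well defined, together with Proposition \ref{prop:constraint}, gives $|\Sigma|$ small in the appropriate sense. I expect this to be the main technical nuisance; if needed, I would read finite energy as part of the notion of solution. Granting $F(u)<\infty$, convexity of $G$ gives pointwise
\[
G(\nb v)-G(\nb u)\ \ge\ \delta(\nb u,\nb v),
\]
by the monotonicity of difference quotients in Lemma \ref{lem:dirder} evaluated at $t=1$. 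Off $\Sigma$ the right-hand side equals $\aaa(\nb u)\cdot\nb(v-u)$. On $\Sigma$ with $\Lam<\infty$ it is again $\aaa(\nb u)\cdot\nb(v-u)$, since $\aaa(\xi)=\Lam\sg^{-1}\xi$ at $|\xi|=\sg$.

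Integrating, I get
\[
F(v)-F(u)\ \ge\ \int_\Om\aaa(\nb u)\cdot\nb(v-u)-\int_\Om f(v-u).
\]
The right-hand side vanishes once I use $\psi=v-u\in W^{1,\infty}_0(\Om)$ as a test function in \eqref{eq:weak}; this is allowed precisely because the integral is assumed well defined. Hence $F(v)\ge F(u)$, and the uniqueness part of Theorem \ref{thm:min} identifies $u$ as the minimizer.

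The delicate point in the converse is the set $\Sigma$ when $\Lam=\infty$. There $\aaa(\nb u)$ is undefined, so I would use the well-definedness assumption to force $|\Sigma|=0$ or $\nb v=\nb u$ a.e.\ on $\Sigma$, and then the pointwise inequality holds a.e.
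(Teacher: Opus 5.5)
Your route is essentially the paper's. For the converse the paper uses the same pointwise convexity inequality $G(\nb v)\ge G(\nb u)+\aaa(\nb u)\cdot\nb(v-u)$, integrated and combined with \eqref{eq:weak} for $\psi=v-u$. For the direct part the paper differentiates $t\mapsto F(u+t\psi)$ at $t=0$ under the integral sign, using that $G$ is $C^{1}$ with $|\nb G|\le\varphi(\frac{\gamma+\sg}{2})$ on $\{|\xi|\le\frac{\gamma+\sg}{2}\}$. You obtain the same two-sided first variation by inserting $v=u\pm\eps\psi$ into Theorem \ref{thm:VI}(iii). That is equally valid and simply reuses the convergence work already done there.

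The one step you leave open, $F(u)<\infty$ in regime (H1), is not settled by what you wrote. Definition \ref{def:weak} carries no energy bound. Proposition \ref{prop:constraint} presupposes finite energy and only returns $\|\nb u\|_{\infty}\le\sg$, which is already part of $u\in S_{g}$. Putting finite energy into the notion of solution would weaken the statement. No extra input is needed, though:
\begin{enumerate}
\item[(a)] Read ``well defined'' as $\aaa(\nb u)\cdot\nb(v-u)\in L^{1}(\Om)$, which is what makes the test $\psi=v-u$ meaningful. Take any $v\in S_{g}$ with $F(v)<\infty$; one exists by Assumption \ref{ass:A}.
\item[(b)] In (H1) this forces $|\nb v|<\sg$ a.e. So on $\Sigma$ the vector $\eta=\nb(v-u)$ is nonzero, and $\nb u\cdot\eta\le-\frac12|\eta|^{2}<0$ by Lemma \ref{lem:dirder}. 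Hence $\aaa(\nb u)\cdot\eta=\sum_{p}a_{p}\sg^{p-2}\,\nb u\cdot\eta=-\infty$ there, and integrability gives $|\Sigma|=0$.
\item[(c)] Then $G(\nb u)<\infty$ a.e., so Lemma \ref{lem:dirder} applies at a.e.\ point. Integrating $0\le G(\nb u)\le G(\nb v)-\aaa(\nb u)\cdot\nb(v-u)$ yields $\int_{\Om}G(\nb u)<\infty$.
\end{enumerate}
So finite energy is an output of the convexity inequality you already use, not a prerequisite. The same computation gives $\nb v=\nb u$ a.e.\ on $\Sigma$ in (H2), as you indicate. The paper's two-line argument passes over both points silently, so your caution was warranted; the step just closes this way rather than by assumption.
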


\begin{proof}
If $\gamma:=\|\nb u\|_{\infty}<\sg$, then for $\psi\in W^{1,\infty}_{0}(\Om)$
and $|t|$ small the function $u+t\psi$ still belongs to $S_{g}$; since $G$ is
$C^{1}$ on $\{|\xi|\le\frac{\gamma+\sg}{2}\}$ with
$|\nb G|=\varphi\le\varphi(\frac{\gamma+\sg}{2})<\infty$ there, one may
differentiate under the integral sign and obtain \eqref{eq:weak}; absolute
convergence is given by Remark \ref{rem:welldef}. Conversely, by convexity of $G$,
\[
F(v)-F(u)\ \ge\ \int_{\Om}\aaa(\nb u)\cdot\nb(v-u)-\int_{\Om}f(v-u)=0
\]
for every admissible $v$, using \eqref{eq:weak} with $\psi=v-u\in
W^{1,\infty}_{0}(\Om)$.
\end{proof}

\begin{remark}\label{rem:H1ae}
In the regime (H1), $\Phi(\sg)=\infty$, the minimizer satisfies
\begin{equation}\label{eq:ae}
|\nb u|<\sg\quad\text{a.e. in }\Om ,
\end{equation}
since otherwise $F(u)=+\infty$, contradicting $F(u)\le F(\bar g)<\infty$. We
stress that \eqref{eq:ae} is \emph{strictly weaker} than
$\|\nb u\|_{\infty}<\sg$ and is not sufficient to apply Corollary
\ref{cor:VI2PDE}: for instance $u(x)=\sg\int_{0}^{x}(1-s)^{1/2}\,ds$ on
$(0,1)$ satisfies $|u'|<\sg$ {at every point of $(0,1)$, the value $\sg$ being
approached only as $x\to0^{+}$,} but $\|u'\|_{\infty}=\sg$. Obtaining a
quantitative bound $\|\nb u\|_{\infty}\le\gamma<\sg$ is the only missing step
in the whole theory when $\Lam=\infty$; this is the object of the next
section.
\end{remark}

We also  have the following consequence which is relevant in the Born--Infeld regime (H2): 

\begin{corollary}\label{cor:H2nosat}
Assume $\Lam=\infty$ and $L<\sg$, and let $u$ be the minimizer. Then
\[
|\nb u|<\sg\qquad\text{a.e. in }\Om .
\]
More generally, without assuming $L<\sg$, one has $\nb u=\nb v$ a.e.\ on
$\Sigma$ for every $v\in S_{g}$ with $F(v)<\infty$; in particular $\nb u=\nb\bar
g$ a.e.\ on $\Sigma$ for every admissible extension $\bar g$ of the boundary
datum.
\end{corollary}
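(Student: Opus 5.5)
The plan is to read the result off Theorem \ref{thm:VI}(i), applied to a well-chosen competitor $v$.

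The general statement is exactly Theorem \ref{thm:VI}(i): when $\Lam=\infty$, every $v\in S_{g}$ with $F(v)<\infty$ satisfies $\nb v=\nb u$ a.e.\ on $\Sigma=\{|\nb u|=\sg\}$. Taking $v=\bar g$ from Assumption \ref{ass:A} (or any extension in $S_g$ of finite energy) gives the final clause $\nb u=\nb\bar g$ a.e.\ on $\Sigma$. I would simply record this, recalling that Step 4 of the proof of Theorem \ref{thm:VI} is where $\delta_v=-\infty$ on a set of positive measure contradicts the integrability of $\delta_v^-$.

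For the first claim, assume $L<\sg$ and choose as competitor the McShane extension $\bar g$ with $\Lip(\bar g)=L<\sg$. By Proposition \ref{prop:Aholds}(i) it lies in $S_g$ and has $F(\bar g)\le |\Om|\Phi(L)+\|f\|_1M_0<\infty$. The previous paragraph then yields $\nb u=\nb\bar g$ a.e.\ on $\Sigma$. But $|\nb\bar g|\le L<\sg$ a.e., whereas $|\nb u|=\sg$ on $\Sigma$. Hence $\Sigma$ must be negligible, that is, $|\nb u|<\sg$ a.e.

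There is no genuine obstacle here; the only point needing care is making sure the competitor is admissible in Theorem \ref{thm:VI}, namely that $F(\bar g)<\infty$. This holds in case (i) because $\Phi(L)<\infty$ for $L<\sg$. In the general case, without $L<\sg$, one cannot choose a competitor strictly inside the constraint, which is why only the weaker identification of gradients on $\Sigma$ survives.
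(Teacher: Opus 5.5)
Your proposal is correct and is essentially the paper's proof: the general identity $\nb u=\nb v$ a.e.\ on $\Sigma$ is Theorem~\ref{thm:VI}(i), and the first claim follows by applying it to a Lipschitz extension $\bar g$ of rank $L<\sg$. That extension has $F(\bar g)\le|\Om|\Phi(L)+\|f\|_{L^{1}(\Om)}M_{0}<\infty$ and satisfies $|\nb\bar g|\le L<\sg=|\nb u|$ on $\Sigma$, which forces $|\Sigma|=0$; your closing aside about the case $L=\sg$ is only heuristic and slightly too strong (in a nonconvex domain a competitor with $\|\nb v\|_{\infty}<\sg$ may exist even when $L=\sg$, since $L$ is measured with the Euclidean distance), but it plays no role in the argument.
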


\begin{proof}
The second assertion is Theorem \ref{thm:VI}(i). For the first, let $\bar g$ be
a Lipschitz extension of $g$ of rank $L$, so that $|\nb\bar g|\le L<\sg$ a.e.\
and $F(\bar g)\le|\Om|\Phi(L)+\|f\|_{L^{1}}M_{0}<\infty$. On $\Sigma$ one has
$|\nb u|=\sg>L\ge|\nb\bar g|$, hence $\nb u\neq\nb\bar g$ there; by Theorem
\ref{thm:VI}(i) applied to $v=\bar g$ this forces $|\Sigma|=0$.
\end{proof}

\begin{remark}\label{rem:rigidity}
Corollary \ref{cor:H2nosat} is the analogue, in the regime (H2), of Remark
\ref{rem:H1ae}, and the two have different mechanisms: in (H1) the constraint
cannot be attained on a set of positive measure because the energy would be
infinite there, whereas in (H2) the energy at the constraint is finite and it is
the \emph{infinite slope} $\Phi'(\sg^{-})=\Lam=\infty$ that forbids it.
Both statements give $|\nb u|<\sg$ a.e.\ and neither gives
$\|\nb u\|_{\infty}<\sg$, which remains out of reach in general (Remark
\ref{rem:H1ae}).

 {The mechanism behind Corollary \ref{cor:H2nosat} is the one of Step 4 in
the proof of Theorem \ref{thm:VI}: if the constraint were active on a set of
positive measure, moving from $u$ towards any admissible competitor $v$ would
decrease the energy at rate $\int_{\Om}\delta_{v}=-\infty$, which is
incompatible with minimality. It breaks down precisely when $\Lam<\infty$,
because there the directional derivative on $\Sigma$ equals
$\Lam\sg^{-1}\nb u\cdot\nb(v-u)$ by Lemma \ref{lem:dirder}, a finite and
nonpositive quantity: no contradiction arises, and indeed saturation does occur
in the regime (H3), see Theorem \ref{thm:radial}. It also breaks down in the
borderline case $L=\sg$, where every admissible competitor may have the same
gradient as $u$ on $\Sigma$, so that $\delta_{v}=0$ there; this is exactly the
regime in which the maximal surfaces of \cite{BS1982} are allowed to contain
light rays.}\end{remark}

\section{Lipschitz estimates and existence of solutions when
\texorpdfstring{$\Lam=\infty$}{Lambda=infinity}}\label{sec:existence}

\subsection{A comparison lemma}

The a priori bound $\|\nb u\|_{\infty}\le\gamma<\sg$ cannot be obtained from the
regularity theory for functionals with $(p,q)$ or nonstandard growth
\cite{M2006,M2020,ELM2004,CM2015,BCM2018,L1991}, which requires a $\Delta_{2}$
condition on the integrand: here $G$ is not even finite outside
$\{|\xi|\le\sg\}$. We use instead the following comparison principle, which
relies only on convexity and on the lattice identity, and is therefore
insensitive to the growth of $G$.

\begin{lemma}\label{lem:comparison}
Let $\omega\subset\R^{N}$ be a bounded open set, $\lambda\in\R$, and let
$u,w\in C^{0,1}(\overline\omega)$ be such that each of them minimizes
\[
\mathcal F_{\omega}(v)=\int_{\omega}G(\nb v)-\lambda\int_{\omega}v
\]
among the functions of $C^{0,1}(\overline\omega)$ with its own boundary values,
with $\mathcal F_{\omega}(u),\mathcal F_{\omega}(w)<\infty$. Then
\[
\max_{\overline\omega}(w-u)=\max_{\partial\omega}(w-u).
\]
\end{lemma}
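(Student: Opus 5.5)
The plan is to run the classical lattice (truncation) argument, using strict convexity of $G$ in place of any regularity. Put $M:=\max_{\partial\omega}(w-u)$ and suppose, by contradiction, that $\max_{\overline\omega}(w-u)>M$. Fix any $k>M$ and consider
\[
U:=\max(u,w-k),\qquad W:=\min(w,u+k).
\]
Both functions belong to $C^{0,1}(\overline\omega)$. Since $w-k<u$ on $\partial\omega$, they have the right boundary values: $U=u$ and $W=w$ on $\partial\omega$.

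The key steps are the two pointwise lattice identities. Let $E:=\{w-k>u\}$. A.e.\ on $E$ we have $\nabla U=\nabla w$ and $\nabla W=\nabla u$. A.e.\ off $E$ we have $\nabla U=\nabla u$ and $\nabla W=\nabla w$; on the level set $\{w-k=u\}$ this uses $\nabla w=\nabla u$ a.e. Hence, a.e.\ in $\omega$,
\[
G(\nabla U)+G(\nabla W)=G(\nabla u)+G(\nabla w),\qquad U+W=u+w .
\]
Two consequences follow. First, $|\nabla U|,|\nabla W|\le\sg$ a.e., so $U$ and $W$ are admissible competitors. Second, $\mathcal F_\omega(U)+\mathcal F_\omega(W)=\mathcal F_\omega(u)+\mathcal F_\omega(w)<\infty$, and the integrals are finite because $G\ge0$ and all four functions are bounded. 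Minimality of $u$ and of $w$ gives $\mathcal F_\omega(U)\ge\mathcal F_\omega(u)$ and $\mathcal F_\omega(W)\ge\mathcal F_\omega(w)$. Both inequalities must therefore be equalities. In particular $U$ is a minimizer of finite energy with the boundary values of $u$.

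Next I would invoke uniqueness exactly as in the proof of Theorem \ref{thm:min}. If $\nabla U\neq\nabla u$ on a set of positive measure, strict convexity of $G$ on $\{|\xi|\le\sg\}$ (Lemma \ref{lem:elementary}) gives $\mathcal F_\omega(\frac{U+u}{2})<\mathcal F_\omega(u)$, which is impossible. So $\nabla(U-u)=0$ a.e. Then $U-u$ is constant on each connected component of $\omega$. It is also continuous up to $\overline\omega$ and vanishes on $\partial\omega$, and every component of a bounded open set has nonempty boundary contained in $\partial\omega$. Hence $U\equiv u$, that is $w-k\le u$ on $\overline\omega$. Since $k>M$ was arbitrary, letting $k\downarrow M$ yields $w-u\le M$, which contradicts the assumption.

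I expect the main obstacle to be the uniqueness step on a general bounded open $\omega$, possibly disconnected and with no regularity of $\partial\omega$. The issue is to make sure the strict convexity argument genuinely uses only finiteness of the energy. It must also cover points with $|\nabla u|=\sg$, where $G$ is finite in regimes (H2) and (H3) but not differentiable. The argument above is purely convexity-based, so it should not need the Euler--Lagrange equation there. A lesser point to check is the a.e.\ identity $\nabla w=\nabla u$ on $\{w-k=u\}$, which is the standard fact for Lipschitz (Sobolev) functions.
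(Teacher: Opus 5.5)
Your proof is correct and follows the paper's argument. The paper sets $\tilde w=w-c$ with $c=\max_{\partial\omega}(w-u)$ and forms $\min(u,\tilde w)$ and $\max(u,\tilde w)$. It then uses the same a.e.\ relabeling of gradients to turn the two minimality inequalities into equalities, and concludes $\tilde w\le u$ by uniqueness via strict convexity of $G$.

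Your deviations are cosmetic. Taking $k>M$ and letting $k\downarrow M$ is unnecessary, since $k=M$ already gives $U=u$, $W=w$ on $\partial\omega$, and the contradiction framing is not needed either. Your explicit justification, via connected components, of the passage from $\nabla U=\nabla u$ a.e.\ to $U\equiv u$ is a useful addition; the paper leaves that step implicit in the uniqueness part of Theorem~\ref{thm:min}.
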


\begin{proof}
Let $c=\max_{\partial\omega}(w-u)$ and $\tilde w=w-c$, which still minimizes
$\mathcal F_{\omega}$ among functions with its own boundary values (adding a
constant shifts $\mathcal F_{\omega}$ by the constant $-\lambda c|\omega|$ for
all competitors). Set
\[
m(x)=\min(u,\tilde w),\qquad M(x)=\max(u,\tilde w) \ \ \text{for }\ x\in\omega.  
\]
Both belong to $C^{0,1}(\overline\omega)$, and since $\tilde w\le u$ on
$\partial\omega$ we have $m=\tilde w$ and $M=u$ on $\partial\omega$. Hence $M$
is admissible in the  variational  problem solved by $u$, and $m$ is admissible in
the one solved by $\tilde w$:
\begin{equation}\label{mM}
\mathcal F_{\omega}(M)\ge\mathcal F_{\omega}(u),\qquad
\mathcal F_{\omega}(m)\ge\mathcal F_{\omega}(\tilde w) .
\end{equation}
Observe that, setting $P=\{u>\tilde w\}$ one has $(m,M)=(\tilde w,u)$ on $P$
and $(m,M)=(u,\tilde w)$ on $\omega\setminus P${, so that $m$ and $M$ are a
pointwise relabeling of the pair $(u,\tilde w)$. The same relabeling holds
a.e.\ for the gradients: on the open sets $\{u>\tilde w\}$ and $\{u<\tilde w\}$
this is clear, while on the contact set $\{u=\tilde w\}$ one has
$\nb u=\nb\tilde w$ a.e., since a Sobolev function has vanishing gradient a.e.\
on any of its level sets, applied here to $u-\tilde w\in W^{1,\infty}(\omega)$}.
Splitting each integral over $P$
and its complement, and using $m+M=u+\tilde w$ pointwise for the lower order
term, we thus obtain
\[
\mathcal F_{\omega}(m)+\mathcal F_{\omega}(M)
=\mathcal F_{\omega}(u)+\mathcal F_{\omega}(\tilde w),
\]
 {an identity which uses no property of $G$ whatsoever, neither convexity nor
finiteness, but only the additivity of the integral. Hence the inequalities} \eqref{mM} are
equalities. By the uniqueness of the
minimizer with given boundary values (strict convexity of $G$, as in the proof
of Theorem \ref{thm:min}) we conclude $M=u$, i.e.\ $\tilde w\le u$ in
$\omega$, which implies the claim.
\end{proof}

\begin{remark}{\label{rem:comparisonSobolev}
Let us stress that the proof of Lemma \ref{lem:comparison} uses only two facts: that $m$ and $M$ are
admissible competitors for the two minimum problems, and the additivity of the
integral. Both are available in a Sobolev setting, so that the statement holds
verbatim for $u,w\in W^{1,r}(\omega)$, $r\ge1$, minimizing among the functions
with the same trace, provided the maximum over $\partial\omega$ is understood as
$\inf\{c\in\R:(w-u-c)^{+}\in W^{1,r}_{0}(\omega)\}$ and the maximum over
$\overline\omega$ as an essential supremum. We shall use the lemma in this form
for the truncated problems of Theorem \ref{thm:bsc}, whose minimizers are not
known to be Lipschitz beforehand.}
\end{remark}

\begin{remark}\label{rem:cellina}
It is worth making explicit where exactly the constraint obstructs the passage
from the minimizer to the equation.

Encoding the constraint in the integrand, as we did in \eqref{eq:G}, is not a
technical device: as observed by Cellina \cite{Cellina}, a gradient constrained
problem is \emph{equivalent} to an unconstrained minimization for an
extended-real-valued integrand, and it is in this form that it should be read.
Writing $G=\Phi(|\cdot|)+I_{B_{\sg}}$, with $I_{B_{\sg}}$ the indicator function
of the ball $\{|\xi|\le\sg\}$, one has
\[
\partial G(\xi)=\aaa(\xi)+N_{B_{\sg}}(\xi),
\]
$N_{B_{\sg}}$ being the normal cone. Away from the constraint the second term
vanishes and $\partial G(\xi)=\{\aaa(\xi)\}$ is a single vector; on the
constraint, $|\xi|=\sg$, the normal cone is the outward ray $\{s\xi:s\ge0\}$
and
\[
\partial G(\xi)=\Big\{t\,\xi\ :\ t\ge\frac{\Lam}{\sg}\Big\}
\quad\text{if }\Lam<\infty,
\qquad
\partial G(\xi)=\emptyset\quad\text{if }\Lam=\infty,
\]
the latter because the graph of $\Phi$ reaches $\sg$ with vertical tangent and
admits no supporting hyperplane there.

A formal consequence is the following. Optimality of $u$ means that some measurable
selection $z(x)\in\partial G(\nb u(x))$ satisfies $-\dive z=f$, and off the
saturation set $\Sigma=\{|\nb u|=\sg\}$ the selection is forced to be
$z=\aaa(\nb u)$. On $\Sigma$, instead, $z=(\Lam/\sg+s)\nb u$ with a free
multiplier $s\ge0$, whereas Definition \ref{def:weak} prescribes the specific
field $\aaa(\nb u)$, that is the choice $s\equiv0$. \emph{The minimizer solves
the equation precisely when the multiplier can be taken to vanish on all of
$\Sigma$}, which is why Corollary \ref{cor:VI2PDE} assumes
$\|\nb u\|_{\infty}<\sg$, and which is the gap between the variational
inequality \eqref{eq:VI} and the equation \eqref{eq:weak}. It also explains how
a minimizer can exist while a solution does not: on $\Sigma$ the optimal field
has $|z|=\Lam+s\sg\ge\Lam$, so it may exceed the maximal flux that Theorem
\ref{thm:nonexistence} allows a solution to have.  

Finally, none of this discussion affects Lemma \ref{lem:comparison}, which uses no
differentiability of $G$ whatsoever, and this is what makes it available in a
setting where the regularity theory for $(p,q)$ and nonstandard growth
\cite{M2006,M2020,ELM2004,CM2015,BCM2018,L1991} is not.
\end{remark}

\subsection{The global Lipschitz constant is a boundary quantity}

\begin{theorem}\label{thm:haarrado}
Let $f\equiv\lambda$ be constant, let Assumption \ref{ass:A} hold and let $u$
be the minimizer of $F$ on $S_{g}$. Define the \emph{boundary--to--interior
slope}
\begin{equation}\label{eq:S}
S:=\sup\Big\{\frac{|u(x)-g(y)|}{|x-y|}\ :\ x\in\Om,\ y\in\partial\Om,\
x\neq y\Big\} .
\end{equation}
Then,
\[
\Lip(u,\Om)=S .
\]
In particular, if $S<\sg$ then $u$ is a weak solution of \eqref{eq:main}.
\end{theorem}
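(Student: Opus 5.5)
This is the classical Haar--Radò translation argument, made available here by the comparison Lemma \ref{lem:comparison}. The inequality $S\le\Lip(u,\Om)$ is immediate. Indeed, $u$ is Lipschitz on $\overline\Om$ with $u=g$ on $\partial\Om$, so for $x\in\Om$ and $y\in\partial\Om$ the quotient $|u(x)-g(y)|/|x-y|$ is a difference quotient of $u$ itself. One caveat: if $\Om$ is not convex, $\Lip(u,\Om)$ should be read as the sup of $|u(x)-u(z)|/|x-z|$ over pairs in $\Om$, not as $\|\nb u\|_\infty$; I would note this, then prove $|u(x)-u(z)|\le S|x-z|$ for all $x,z\in\Om$, from which $\|\nb u\|_\infty\le S$ follows.

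For the reverse inequality, fix $\tau\in\R^N$ and set $\omega=\Om\cap(\Om-\tau)$, assuming it is nonempty. Put $w(x)=u(x+\tau)$ for $x\in\overline\omega$. Since the integrand $G(\nb v)-\lambda v$ is translation invariant ($f\equiv\lambda$ is constant), $w$ minimizes $\mathcal F_\omega$ among Lipschitz functions on $\overline\omega$ with its own boundary values. For $u$ itself I would argue by localization: if some $v$ with $v=u$ on $\partial\omega$ had $\mathcal F_\omega(v)<\mathcal F_\omega(u)$, then extending $v$ by $u$ on $\Om\setminus\omega$ gives an element of $S_g$ with lower $F$, provided $\|\nb v\|_\infty\le\sg$. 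Competitors with $|\nb v|>\sg$ on a positive-measure set have $\mathcal F_\omega=+\infty$, so they are harmless. The extension is Lipschitz because $v$ and $u$ agree on $\partial\omega$; both gradients are bounded by $\sg$, though on nonconvex domains the glued function's global Lipschitz constant needs a little care. Similarly, $w$ is a minimizer by translating the minimality of $u$ on $\omega+\tau\subset\Om$. Both energies are finite because $F(u)<\infty$. Lemma \ref{lem:comparison} then gives $\max_{\overline\omega}(w-u)=\max_{\partial\omega}(w-u)$, and applying it with the roles exchanged gives the same for $u-w$.

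It remains to bound the boundary values. If $x\in\partial\omega$, then either $x\in\partial\Om$ or $x+\tau\in\partial\Om$. In the first case $u(x)=g(x)$ and $x+\tau\in\overline\Om$, so $|w(x)-u(x)|=|u(x+\tau)-g(x)|\le S|\tau|$. This uses the definition of $S$ together with continuity up to the boundary, and, when $x+\tau\in\partial\Om$ as well, the bound $L\le S$. That bound holds because $g$ restricted to $\partial\Om$ can be approximated from interior points, i.e.\ via $S$ by continuity. The second case is symmetric. Hence $|u(x+\tau)-u(x)|\le S|\tau|$ for every $x$ with $x,x+\tau\in\Om$, which is exactly $\Lip(u,\Om)\le S$.

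Finally, if $S<\sg$ then $\|\nb u\|_\infty\le S<\sg$, and Corollary \ref{cor:VI2PDE} shows that $u$ is a weak solution. The main obstacle I expect is justifying that $u|_{\omega}$ and $w$ are minimizers of $\mathcal F_\omega$ in the class required by Lemma \ref{lem:comparison}. This involves the gluing and Lipschitz-admissibility issue on possibly irregular open sets $\omega$, and I would handle it with the Sobolev version in Remark \ref{rem:comparisonSobolev}, where gluing along $\partial\omega$ is automatic in $W^{1,\infty}$.
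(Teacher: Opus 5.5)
Your proposal is correct and follows the paper's proof step by step. It compares $u$ with $u(\cdot+\tau)$ on $\Om\cap(\Om-\tau)$ via Lemma \ref{lem:comparison}, uses the boundary dichotomy ($x\in\partial\Om$ or $x+\tau\in\partial\Om$) together with $L\le S$, and invokes Corollary \ref{cor:VI2PDE} for the last assertion. Your explicit justification of the paper's ``by locality'' needs no Sobolev detour: a competitor glued to $u$ along $\partial\omega$ is Euclidean-Lipschitz on $\overline\Om$ by the triangle inequality through a point of $\partial\omega$ on the segment, and $S_{g}$ only constrains $\|\nb v\|_{\infty}$.
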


\begin{proof}
The inequality $S\le\Lip(u,\Om)$ is trivial, since $u=g$ on $\partial\Om$.
For the converse, fix $\tau\in\R^{N}\setminus\{0\}$ and set
$\Om_{\tau}=\Om-\tau$, $\Om'=\Om\cap\Om_{\tau}$, which is nonempty precisely
when $\tau=y-x$ for some $x,y\in\Om$, and $u_{\tau}(x)=u(x+\tau)$. By locality, $u$ minimizes
$\mathcal F_{\Om'}$ among functions with its own boundary values on $\Om'$;
by the translation invariance of $G$ and of the constant $\lambda$, the same is
true of $u_{\tau}$. If $x\in\partial\Om'$, then either $x\in\partial\Om$, and
then $u_{\tau}(x)-u(x)=u(x+\tau)-g(x)\le S|\tau|$ by \eqref{eq:S}; or
$x+\tau\in\partial\Om$, and then $u_{\tau}(x)-u(x)=g(x+\tau)-u(x)\le S|\tau|$,
again by \eqref{eq:S}; note that $S\ge L$, since every point of $\partial\Om$ is
a limit of points of $\Om$, so that \eqref{eq:S} also bounds
$|g(x+\tau)-g(x)|$ by $S|\tau|$ when both points lie on $\partial\Om$. Lemma
\ref{lem:comparison} then gives
\[
u(x+\tau)-u(x)\le S|\tau|\qquad\text{for every }x\in\Om',
\]
that is, for every $x$ such that both $x$ and $x+\tau$ belong to $\Om$.
Exchanging the roles of $u$ and $u_{\tau}$ yields the same bound for the
absolute value, and letting $\tau=y-x$ we obtain $|u(y)-u(x)|\le S|y-x|$ for
all $x,y\in\Om$. The last assertion follows from Corollary \ref{cor:VI2PDE}.
\end{proof}

\begin{figure}[ht]
\centering
\begin{tikzpicture}[scale=1.5]
\def\bloba{(0,0) circle (1.12)}
\def\blobb{(-0.92,-0.46) circle (1.12)}
\begin{scope}
  \clip \bloba;
  \fill[RoyalBlue!14] \blobb;
\end{scope}
\draw[black,line width=.7pt] \bloba;
\draw[gray!70,dashed,line width=.6pt] \blobb;
\draw[-{Stealth[length=5pt]},BrickRed,line width=.7pt] (-0.92,-0.46) -- (0,0);
\node[BrickRed,fig label,anchor=north west] at (-0.62,-0.12) {$\tau$};
\node[fig label,anchor=south west] at (0.70,0.72) {$\Om$};
\node[gray!70,fig label,anchor=north] at (-1.35,-1.42) {$\Om-\tau$};
\node[RoyalBlue!75!black,fig label] at (-0.46,-0.66) {$\Om'$};
\fill (0,0) circle (.7pt); \fill[gray!70] (-0.92,-0.46) circle (.7pt);
\end{tikzpicture}
\caption{The comparison with translates in the proof of Theorem
\ref{thm:haarrado}. On the overlap $\Om'=\Om\cap(\Om-\tau)$ both $u$ and
$u_{\tau}=u(\cdot+\tau)$ minimize the same functional, and every point of
$\partial\Om'$ has either itself or its translate on $\partial\Om$, since
$\partial\Om'\subset\partial\Om\cup\partial(\Om-\tau)$; this is what turns the
boundary--to--interior slope $S$ into a global Lipschitz bound.
No convexity of $\Om$ is used.}
\label{fig:translates}
\end{figure}

Theorem \ref{thm:haarrado} is a Haar--Rado type statement (see
\cite{ACJ04,Cellina,MT} for the classical theory of Lipschitz regularity of
minimizers of functionals of the gradient) and it reduces the whole existence
question, in the regime $\Lam=\infty$, to a \emph{boundary} gradient estimate.
This is exactly the structure of the theory of \cite{BS1982} for the Minkowski
mean curvature operator, where the global gradient bound (\cite[Theorem 3.5]{BS1982}) is
obtained from a boundary estimate through the special structure of the
operator. Note that no convexity of $\Om$ is required; on the other hand, in a
non-convex domain the quantity $S$ may be much larger than $L$, since two points
which are close in $\R^{N}$ may be far apart within $\Om$, so that the criterion
$S<\sg$ becomes correspondingly restrictive. This is the same phenomenon as the
\emph{spacelike displacing} condition of \cite{BCP2021} recalled in
Section \ref{sec:applications}.

\subsection{The bounded slope condition}

Recall that $g$ satisfies the \emph{bounded slope condition} (BSC) of rank
$K>0$ on $\partial\Om$ if for every $y\in\partial\Om$ there exist affine
functions $\ell^{\pm}_{y}$ with
\[
\ell^{-}_{y}(y)=g(y)=\ell^{+}_{y}(y),\qquad
\ell^{-}_{y}\le g\le\ell^{+}_{y}\ \text{ on }\partial\Om,\qquad
|\nb\ell^{\pm}_{y}|\le K .
\]
Unless $g$ is affine, the BSC forces $\Om$ to be convex, and it is classically
satisfied with a
computable $K=K(\Om,g)$ whenever $\Om$ is uniformly convex with $C^{2}$
boundary and $g\in C^{1,1}(\partial\Om)$; see \cite{Cellina,MT}.

\begin{theorem}\label{thm:bsc}
Let $f\equiv0$ and let $g$ satisfy the BSC of rank $K<\sg$ on $\partial\Om$, so
that in particular $\Om$ is convex.
Let $u$ be the minimizer of $F$ on $S_{g}$ and, for $n\ge2$, let $u_{n}$ be
the minimizer of the truncated energy
\begin{equation}\label{eq:Fn}
F_{n}(v)=\sum_{p=2}^{n}\frac{a_{p}}{p}\int_{\Om}|\nb v|^{p}-\int_{\Om}fv
\end{equation}
among $v\in W^{1,n}(\Om)$ with $v-\bar g\in W^{1,n}_{0}(\Om)$. Then
\begin{equation}\label{eq:uniformK}
\|\nb u\|_{L^{\infty}(\Om)}\le K<\sg
\qquad\text{and}\qquad
\|\nb u_{n}\|_{L^{\infty}(\Om)}\le K<\sg\quad\text{for every }n\ge2 .
\end{equation}
In particular $u$ is a weak solution of \eqref{eq:main} and $u_{n}$ is a weak
solution of the truncated problem.
\end{theorem}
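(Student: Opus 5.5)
The plan is the classical Hilbert--Haar argument: affine functions are barriers, the comparison principle yields a boundary slope bound, and the translation argument of Theorem \ref{thm:haarrado} propagates it to the interior. The key observation is that with $f\equiv0$ every affine function $\ell$ with $|\nb\ell|\le K<\sg$ minimizes both $F$ and every $F_{n}$ with its own boundary values. Indeed, $\nb\ell$ is constant and lies in the interior of the domain of $G$. By convexity of $G$ (resp.\ of $\xi\mapsto\sum_{p\le n}\frac{a_{p}}{p}|\xi|^{p}$), for any competitor $v$ with $v-\ell$ vanishing on $\partial\Om$ we have
\[
\int_{\Om}G(\nb v)\ \ge\ \int_{\Om}G(\nb\ell)+\aaa(\nb\ell)\cdot\int_{\Om}\nb(v-\ell)=\int_{\Om}G(\nb\ell),
\]
since $\int_{\Om}\nb(v-\ell)=0$ by the divergence theorem. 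The same computation works for $F_{n}$ with the truncated field $\aaa_{n}$.

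\emph{Boundary estimate.} Fix $y\in\partial\Om$ and let $\ell^{\pm}_{y}$ be the BSC affine functions. On $\partial\Om$ we have $\ell^{-}_{y}\le g\le\ell^{+}_{y}$. Lemma \ref{lem:comparison}, applied with $\omega=\Om$ and $\lambda=0$ to the pairs $(u,\ell^{+}_{y})$ and $(\ell^{-}_{y},u)$, gives $\ell^{-}_{y}\le u\le\ell^{+}_{y}$ in $\Om$. Hence, for $x\in\Om$,
\[
|u(x)-g(y)|\le\max_{\pm}|\ell^{\pm}_{y}(x)-\ell^{\pm}_{y}(y)|\le K|x-y|.
\]
So the boundary--to--interior slope satisfies $S\le K$, and Theorem \ref{thm:haarrado} gives $\Lip(u,\Om)\le K$. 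Since $K<\sg$, Corollary \ref{cor:VI2PDE} then shows that $u$ is a weak solution.

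\emph{Truncated problems.} The minimizer $u_{n}$ exists and is unique by the direct method in $W^{1,n}$, since the integrand has $n$-growth and is strictly convex. We use the Sobolev version of Lemma \ref{lem:comparison} from Remark \ref{rem:comparisonSobolev}, with $G$ replaced by the truncated integrand; the proof there uses only admissibility of the lattice competitors $m,M$, additivity of the integral, and uniqueness. This gives $\ell^{-}_{y}\le u_{n}\le\ell^{+}_{y}$ a.e. We then rerun the translation argument of Theorem \ref{thm:haarrado} on $\Om'=\Om\cap(\Om-\tau)$. Because $\Om$ is convex, $\Om'$ is convex, hence Lipschitz, so traces make sense there. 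The boundary comparison $u_{n}(x+\tau)-u_{n}(x)\le K|\tau|$ on $\partial\Om'$ must be understood in the trace sense of Remark \ref{rem:comparisonSobolev}. The conclusion is $|u_{n}(x+\tau)-u_{n}(x)|\le K|\tau|$ a.e., hence $\|\nb u_{n}\|_{\infty}\le K$. Since $u_{n}$ is then a Lipschitz minimizer of a $C^{1}$ convex energy, it is a weak solution of the truncated equation by standard differentiation under the integral sign.

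\emph{Main obstacle.} The delicate point is the boundary step for $u_{n}$, which is not known a priori to be continuous. We must make sense of the statement that the trace of $(u_{n}(\cdot+\tau)-u_{n}-K|\tau|)^{+}$ vanishes on $\partial\Om'$. The portion of $\partial\Om'$ lying on $\partial\Om$ is handled by the barrier inequality $\ell^{-}_{y}\le u_{n}\le\ell^{+}_{y}$. The difficulty is turning this pointwise a.e.\ barrier bound into a trace statement near each boundary point, and doing so uniformly in $y$. I would first try the following route. For a.e.\ $x$ and each $y\in\partial\Om$, the barriers give $|u_{n}(x)-g(y)|\le K|x-y|$. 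The function $x\mapsto\sup_{y}(g(y)+K|x-y|)$ type envelope (the McShane-type bounds) is Lipschitz, and $u_{n}$ is squeezed between two such Lipschitz functions that agree with $g$ on $\partial\Om$. This should yield the required trace inequality by comparison with continuous functions.
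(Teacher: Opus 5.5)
Your proposal is correct and follows the paper's own proof. The BSC affine barriers minimize with their own boundary values, Lemma~\ref{lem:comparison} traps $u$ between $\ell^{-}_{y}$ and $\ell^{+}_{y}$ so that $S\le K$, Theorem~\ref{thm:haarrado} upgrades this to $\Lip(u,\Om)\le K$, and the truncated case runs the same argument with $G_{n}$ via Remark~\ref{rem:comparisonSobolev}. Where the paper just says the proof for $u_{n}$ is identical, you check the trace condition on $\partial\Om'$. Your squeezing route for this works: $\sup_{y}(g(y)-K|x-y|)\le u_{n}(x)\le\inf_{y}(g(y)+K|x-y|)$, with the infimum, not the supremum, for the upper envelope. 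This yields a Lipschitz function on $\overline{\Om'}$, nonpositive on $\partial\Om'$, that dominates $u_{n}(\cdot+\tau)-u_{n}-K|\tau|$, which is exactly what the Sobolev comparison needs.
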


\begin{proof}
We give the proof for $u$; the one for $u_{n}$ is identical, with $G$ replaced
by $G_{n}(\xi)=\sum_{p\le n}\frac{a_{p}}{p}|\xi|^{p}$, which is convex, finite
and strictly convex on the whole of $\R^{N}$, so that Lemma
\ref{lem:comparison} applies verbatim; note that the rank $K$ does not depend
on $n$.

Fix $y\in\partial\Om$. The affine function $\ell^{+}_{y}$ satisfies
$|\nb\ell^{+}_{y}|\le K<\sg$ and is a weak solution of $\dive(\aaa(\nb v))=0$
(the field $\aaa(\nb\ell^{+}_{y})$ being constant), hence by Corollary
\ref{cor:VI2PDE} it is the minimizer of $F$ among the functions of
$S_{\ell^{+}_{y}|_{\partial\Om}}$. Since $u=g\le\ell^{+}_{y}$ on
$\partial\Om$, Lemma \ref{lem:comparison} (applied with $\lambda=0$, $w=u$ and
$u$ replaced by $\ell^{+}_{y}$) gives $u\le\ell^{+}_{y}$ in $\Om$; symmetrically
$u\ge\ell^{-}_{y}$. Therefore, for every $x\in\Om$,
\[
|u(x)-g(y)|=|u(x)-\ell^{\pm}_{y}(y)|\le K|x-y| ,
\]
so that $S\le K$ where $S$ is as in  \eqref{eq:S}, and Theorem \ref{thm:haarrado}
gives $\Lip(u,\Om)\le K$.
\end{proof}

\begin{remark}
Theorem \ref{thm:bsc} is the first instance in which we produce solutions of
\eqref{eq:main}, and it does so in all three regimes \eqref{eq:trichotomy}
simultaneously{, consistently with the fact that, when $f=0$, the}
obstruction \eqref{eq:giustiintro} of the next section is empty. Its real
strength, however, is the uniformity in $n$ of \eqref{eq:uniformK}, which is
what makes the truncation scheme converge; see Theorem \ref{thm:trunc} below.
For $f\ne0$ the comparison argument of Theorem
\ref{thm:haarrado} still applies whenever $f$ is constant, and only the
boundary estimate $S<\sg$ is missing: see Open Problem \ref{op:lip}.
\end{remark}

\begin{remark} \label{rem:hilberthaar}
Theorem \ref{thm:haarrado} and Theorem \ref{thm:bsc} belong to the classical
Hilbert--Haar theory of Lipschitz regularity for minimizers of functionals of
the gradient under the bounded slope condition, going back to Hartman and
Stampacchia and developed, without any growth or smoothness assumption on the
integrand, in \cite{Cellina,MT,Clarke}. In particular \cite{MT} treats
functionals of the form $\int_{\Om}[F(\nb v)+g(v)]$, with an \emph{autonomous}
lower order term, under the sole assumption of strict convexity, and derives a
Lipschitz regularity result for constrained minima. Since a constant datum
corresponds to $g(v)=-\lambda v$, this should provide the boundary estimate
$S<\sg$ missing in Theorem \ref{thm:haarrado} and thus extend Theorem
\ref{thm:bsc} to constant data; we have not carried out the details, and we
stress that the case of a datum $f=f(x)$ genuinely depending on $x$ is not
covered by that theory, nor by the comparison with translates of Theorem
\ref{thm:haarrado}, which requires translation invariance.
\end{remark}

\subsection{Convergence of the truncated problems}

Throughout this subsection we take $n>N$, which is no restriction since we are
interested in the limit $n\to\infty$; then $W^{1,n}(\Om)\hookrightarrow
C(\overline\Om)$, so that the functional $F_{n}$ of \eqref{eq:Fn} is well
defined and finite on $\bar g+W^{1,n}_{0}(\Om)$ for every $f\in L^{1}(\Om)$.

\begin{theorem}\label{thm:trunc}
Assume that $f\in L^{1}(\Om)$ and that Assumption \ref{ass:A} holds. Let $u$ be the
minimizer of $F$ on $S_{g}$ and let $u_{n}$ be the minimizer of $F_{n}$ as in
\eqref{eq:Fn}. Assume the uniform bound
\begin{equation}\label{eq:unifbound}
\|\nb u_{n}\|_{L^{\infty}(\Om)}\le K<\sg\qquad\text{for every }n\, { >N}.
\end{equation}
Then $u_{n}\to u$ uniformly on $\overline\Om$, $\|\nb u\|_{\infty}\le K$, $u$
is a weak solution of \eqref{eq:main}, and the convergence is geometric:
for every $\theta\in(K/\sg,1)$ there is $C>0$ such that
\begin{equation}\label{eq:rate}
\frac{a_{2}}{2}\int_{\Om}|\nb u_{n}-\nb u|^{2}\ \le\ F(u_{n})-F(u)\ \le\
|\Om|\sum_{p>n}\frac{a_{p}}{p}K^{p}\ \le\ C\theta^{\,n} .
\end{equation}
\end{theorem}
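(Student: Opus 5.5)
The plan is to compare energies in both directions, using $u_n$ as a competitor for $F$ and $u$ as a competitor for $F_n$, and then to exploit the uniform strong convexity coming from $a_2>0$.

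\emph{Step 1: admissibility and the upper bound.} By \eqref{eq:unifbound}, $u_n$ is Lipschitz with $\|\nb u_n\|_\infty\le K<\sg$, and it has the boundary values of $\bar g$, i.e.\ $u_n=g$ on $\partial\Om$. Hence $u_n\in S_g$, and
\[
F(u_n)=F_n(u_n)+\sum_{p>n}\frac{a_p}{p}\int_\Om|\nb u_n|^p\le F_n(u_n)+|\Om|\sum_{p>n}\frac{a_p}{p}K^p .
\]
Since $u\in S_g$ is Lipschitz, it belongs to $\bar g+W^{1,n}_0(\Om)$, so minimality of $u_n$ gives $F_n(u_n)\le F_n(u)\le F(u)$, the tail terms being nonnegative. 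This yields the middle inequality of \eqref{eq:rate}. The final bound $\le C\theta^n$ follows from $\limsup a_p^{1/p}=1/\sg$: for $p$ large, $a_pK^p\le (K/(\sg\theta'))^p$ with $K/\sg<\theta'<1$ chosen appropriately. One can then take $\theta$ between $K/\sg$ and $1$ and sum the geometric tail.

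\emph{Step 2: the lower bound via strong convexity.} Write $G=\frac{a_2}{2}|\xi|^2+\tilde G$, where $\tilde G$ is convex. The function $w_t=u+t(u_n-u)$ lies in $S_g$ by convexity of the constraint. By minimality, $F(u)\le F(w_t)$. By the parallelogram identity for the quadratic part and convexity of $\tilde G$,
\[
F(w_t)\le (1-t)F(u)+tF(u_n)-\frac{a_2}{2}t(1-t)\int_\Om|\nb u_n-\nb u|^2 .
\]
Note that the linear term in $f$ is affine in $t$. Substituting $F(u)\le F(w_t)$, dividing by $t$ and letting $t\downarrow0$ gives
\[
\frac{a_2}{2}\int_\Om|\nb(u_n-u)|^2\le F(u_n)-F(u).
\]
Both energies are finite, by Assumption \ref{ass:A} for $u$ and by the bound $K<\sg$ for $u_n$.

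\emph{Step 3: convergence and identification of the limit.} Since $u_n-u\in W^{1,2}_0$ and its gradient tends to $0$ in $L^2$, Poincar\'e gives $u_n\to u$ in $L^2$. The $u_n$ are equi-Lipschitz with fixed boundary values, so Ascoli--Arzel\`a upgrades this to uniform convergence of the whole sequence, because every subsequential limit must equal $u$. Weak-$*$ lower semicontinuity of the $L^\infty$ norm of the gradients gives $\|\nb u\|_\infty\le K<\sg$. Corollary \ref{cor:VI2PDE} then shows that $u$ is a weak solution.

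The main obstacle is minor bookkeeping: checking that $u$ is an admissible competitor for $F_n$, and that $u_n\in S_g$ (in particular that it is continuous up to the boundary with value $g$, which holds because $n>N$). The only genuinely quantitative input is the root-test estimate in Step 1 for the tail $\sum_{p>n}\frac{a_p}{p}K^p$.
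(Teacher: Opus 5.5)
Your proof is correct. It differs from the paper's in the order of the argument and in how the lower bound is obtained. The upper bound in \eqref{eq:rate} is the same splitting the paper uses; you also check that $u\in\bar g+W^{1,n}_0(\Omega)$, which the paper takes for granted.

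The paper proves the qualitative part first. Ascoli--Arzel\`a gives a uniformly convergent subsequence. Lower semicontinuity of each $F_m$, together with monotone convergence in $m$, shows that the limit minimizes $F$ on $S_g$, and uniqueness identifies it with $u$. This yields $\|\nabla u\|_\infty\le K$ before any rate is computed. The paper then gets the lower bound from strong convexity in subgradient form at $\nabla u$, namely $G(\zeta)\ge G(\xi)+\mathbf{a}(\xi)\cdot(\zeta-\xi)+\frac{a_2}{2}|\zeta-\xi|^2$. It disposes of the first-order term with the variational inequality of Theorem~\ref{thm:VI}. This step uses that $\mathbf{a}(\nabla u)\cdot\nabla(u_n-u)$ is the directional derivative of $G$ at $\nabla u$, which is where the previously established bound $|\nabla u|\le K<\sigma$ enters.

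You instead use the derivative-free chord inequality along $u+t(u_n-u)\in S_g$, together with minimality alone. This needs neither Theorem~\ref{thm:VI} nor any information on $\nabla u$. In fact it gives $\frac{a_2}{2}\int_\Omega|\nabla v-\nabla u|^2\le F(v)-F(u)$ for every $v\in S_g$ of finite energy, in all three regimes. You then let the two-sided estimate drive the convergence:
\begin{itemize}
\item $\nabla u_n\to\nabla u$ in $L^2$;
\item Poincar\'e plus the equi-Lipschitz bounds give uniform convergence of the whole sequence;
\item $\|\nabla u\|_\infty\le K$ follows from an a.e.\ convergent subsequence of gradients.
\end{itemize}

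Your route is shorter and more self-contained for the statement as given. The paper's compactness-plus-uniqueness argument is more robust for the qualitative convergence alone, since it would still work if strong convexity were weakened to strict convexity.
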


\begin{proof}
By \eqref{eq:unifbound} we have $u_{n}\in S_{g}$ for every $n$, so that
$F_{n}(u_{n})\le F_{n}(v)\le F(v)$ for every $v\in S_{g}$. As in the proof of
Theorem \ref{thm:min}, a subsequence of $u_{n}$ converges uniformly to some
$u^{*}\in S_{g}$ with $\|\nb u^{*}\|_{\infty}\le K$. For fixed $m\le n$ we
have $F_{m}(u_{n})\le F_{n}(u_{n})$, hence by lower semicontinuity
$F_{m}(u^{*})\le\liminf_{n}F_{n}(u_{n})\le F(v)$, and letting $m\to\infty$ by
monotone convergence $F(u^{*})\le F(v)$ for every $v\in S_{g}$. By uniqueness
$u^{*}=u$, and the whole sequence converges. Since $\|\nb u\|_{\infty}\le
K<\sg$, Corollary \ref{cor:VI2PDE} applies.

For the rate, using $F_{n}(u_{n})\le F_{n}(u)$,
\[
\begin{array}{l}
\displaystyle  F(u_{n})-F(u)=\big[F(u_{n})-F_{n}(u_{n})\big]+\big[F_{n}(u_{n})-F_{n}(u)\big]
-\big[F(u)-F_{n}(u)\big] \\\\ \displaystyle   \le\sum_{p>n}\frac{a_{p}}{p}\int_{\Om}|\nb u_{n}|^{p}
\le|\Om|\sum_{p>n}\frac{a_{p}}{p}K^{p} .\end{array}
\]
By \eqref{eq:sigma}, for every $\theta\in(K/\sg,1)$ one has $a_{p}K^{p}\le
C\theta^{p}$ for $p$ large, whence the last inequality in \eqref{eq:rate}.
Finally, $\xi\mapsto G(\xi)-\frac{a_{2}}{2}|\xi|^{2}$ is still convex, so that
$G$ is $a_{2}$-strongly convex and, $u$ being the minimizer,
\[\begin{array}{l}
 \displaystyle  F(u_{n})\ \ge\ F(u)+\int_{\Om}\big[\aaa(\nb u)\cdot\nb(u_{n}-u)-f(u_{n}-u)\big]
+\frac{a_{2}}{2}\int_{\Om}|\nb u_{n}-\nb u|^{2}
\  \\ \\  \displaystyle \ge\ F(u)+\frac{a_{2}}{2}\int_{\Om}|\nb u_{n}-\nb u|^{2},\end{array}
\]
where the bracket is nonnegative by Theorem \ref{thm:VI}.
\end{proof}

\begin{remark}\label{rem:rate}
Estimate \eqref{eq:rate} says that the truncation of the series is a
\emph{geometrically convergent} scheme, with a rate governed by the distance
of the solution from the constraint: the closer $K$ to $\sg$,
the slower the convergence, and the rate degenerates exactly at the threshold
identified in Section \ref{sec:nonexistence}.
\end{remark}

\section{Nonexistence for large data when
\texorpdfstring{$\Lam<\infty$}{Lambda<infinity}}\label{sec:nonexistence}

We come to the main obstruction. The mechanism is transparent: if $\Lam<\infty$
the flux field $\aaa(\nb u)$ is \emph{bounded} on the whole admissible set, and
a bounded flux can only support a limited amount of source.

\begin{theorem}\label{thm:nonexistence}
Assume $\Lam<\infty$ and let $u$ be a weak solution of \eqref{eq:main} in the
sense of Definition \ref{def:weak}, with $f\in L^{1}(\Om)$. Then
\begin{equation}\label{eq:giusti}
\Big|\int_{E}f\Big|\ \le\ \Lam\,P(E)
\qquad\text{for every set of finite perimeter } E\Subset\Om .
\end{equation}
\end{theorem}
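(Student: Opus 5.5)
The plan is to test the weak formulation \eqref{eq:weak} against a Lipschitz approximation of the characteristic function of $E$ and to bound the flux term by $\Lam$ times the total variation of $\chi_E$. First, since $\Lam<\infty$, Remark \ref{rem:welldef} shows that \eqref{eq:weak} holds for \emph{every} $\psi\in W^{1,\infty}_{0}(\Om)$. Moreover the field $z:=\aaa(\nb u)$ satisfies $|z|=\varphi(|\nb u|)\le\varphi(\sg^-)=\Lam$ a.e., because $|\nb u|\le\sg$ and $\varphi$ is nondecreasing with supremum $\Lam$ (Lemma \ref{lem:elementary}(i)). Absolute convergence also lets us write the left side of \eqref{eq:weak} as $\int_\Om z\cdot\nb\psi$ by dominated convergence: the partial sums are bounded by $\Lam|\nb\psi|$.

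Next fix $E\Subset\Om$ of finite perimeter and choose an open $\Om'$ with $E\Subset\Om'\Subset\Om$. Mollify: $\psi_\eps=\chi_E*\rho_\eps$ for $\eps<\dist(E,\partial\Om')$. Then $\psi_\eps\in C^\infty_c(\Om)\subset W^{1,\infty}_0(\Om)$, $0\le\psi_\eps\le1$, $\psi_\eps\to\chi_E$ a.e. (in $L^1$), and $\int|\nb\psi_\eps|\le|D\chi_E|(\R^N)=P(E)$, by the standard property of mollified BV functions. Plugging $\psi_\eps$ into \eqref{eq:weak} gives
\[
\Big|\int_\Om f\psi_\eps\Big|=\Big|\int_\Om z\cdot\nb\psi_\eps\Big|\le\Lam\int_\Om|\nb\psi_\eps|\le\Lam\,P(E).
\]
Finally, let $\eps\to0$. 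Since $|f\psi_\eps|\le|f|\in L^1$ and $\psi_\eps\to\chi_E$ a.e., dominated convergence yields $\int f\psi_\eps\to\int_E f$, which is \eqref{eq:giusti}.

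The only delicate point is the approximation step. We need test functions in $W^{1,\infty}_0(\Om)$ whose gradient $L^1$-norms do not exceed $P(E)$, with no loss, and we need a.e. convergence so that $f\in L^1$ alone suffices. Mollification gives both, thanks to $\|\nb(\chi_E*\rho_\eps)\|_{L^1}\le|D\chi_E|(\R^N)$ and the compact containment $E\Subset\Om$. A secondary point is confirming that the test class in Definition \ref{def:weak} is unrestricted here, and Remark \ref{rem:welldef} settles that. The corollary for $f\equiv\lambda$, namely $\lambda\le\Lam P(E)/|E|$ for all admissible $E$, follows by taking the infimum over $E$.
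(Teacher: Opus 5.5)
Your proof is correct and follows the paper's argument. The paper also bounds $|\aaa(\nb u)|$ by $\Lam$ a.e., tests \eqref{eq:weak} with $\chi_E*\rho_\eps$, and passes to the limit in $\int f\psi_\eps$ by dominated convergence. The only harmless differences are that you use the one-sided estimate $\int|\nb(\chi_E*\rho_\eps)|\le|D\chi_E|(\R^N)=P(E)$ where the paper invokes convergence of $\int|\nb\psi_\eps|$ to $P(E)$, and that you state explicitly, via Remark \ref{rem:welldef}, that the test class is unrestricted when $\Lam<\infty$.
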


\begin{proof}
Since $u\in S_{g}$ we have $|\nb u|\le\sg$ a.e., hence
\[
|\aaa(\nb u)|=\varphi(|\nb u|)\le\varphi(\sg^{-})=\Lam\qquad\text{a.e. in }\Om .
\]
Therefore, by \eqref{eq:weak}, for every $\psi\in C^{\infty}_{c}(\Om)$,
\begin{equation}\label{eq:fluxineq}
\Big|\int_{\Om}f\psi\Big|=\Big|\int_{\Om}\aaa(\nb u)\cdot\nb\psi\Big|
\le\Lam\int_{\Om}|\nb\psi| .
\end{equation}
Let now $E\Subset\Om$ have finite perimeter and let
$\psi_{\eps}=\chi_{E}*\rho_{\eps}$ with $\rho_{\eps}$ a standard mollifier and
$\eps<\dist(E,\partial\Om)$, so that $\psi_{\eps}\in C^{\infty}_{c}(\Om)$ and
$0\le\psi_{\eps}\le1$. Then $\psi_{\eps}\to\chi_{E}$ in $L^{1}$ and a.e., so
that $\int f\psi_{\eps}\to\int_{E}f$ by dominated convergence, while
$\int_{\Om}|\nb\psi_{\eps}|\to P(E)$ by the very definition of the perimeter.
Passing to the limit in \eqref{eq:fluxineq} gives \eqref{eq:giusti}.
\end{proof}

\begin{corollary}\label{cor:cheeger}
Assume $\Lam<\infty$.
\begin{enumerate}
\item[(i)] If $f\equiv\lambda$ with $|\lambda|>\Lam\,h(\Om)$, then
\eqref{eq:main} has no weak solution, whatever the boundary datum $g$.
\item[(ii)] More generally, given $0\le f\in L^{1}(\Om)$, $f\not\equiv0$, set
\begin{equation}\label{eq:Lambdaf}
\Lam_{f}:=\Lam\cdot\inf\Big\{\frac{\int_{\Om}|\nb\psi|}{\int_{\Om}f\psi}\ :\
\psi\in C^{\infty}_{c}(\Om),\ \psi\ge0,\ \int_{\Om}f\psi>0\Big\} .
\end{equation}
Then the problem $-\sum_{p}a_{p}\Delta_{p}u=\mu f$ in $\Om$, $u=g$ on
$\partial\Om$, admits no weak solution for $\mu>\Lam_{f}$.
\item[(iii)] In either case the minimizer $u$ of $F$ given by Theorem
\ref{thm:min} exists but satisfies $\|\nb u\|_{L^{\infty}(\Om)}=\sg$: the
gradient constraint is active.
\end{enumerate}
\end{corollary}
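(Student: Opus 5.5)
All three parts will be derived from Theorem \ref{thm:nonexistence}, Corollary \ref{cor:VI2PDE} and Theorem \ref{thm:min}; no new estimate is needed.

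For (i), I argue by contradiction. Suppose $u$ is a weak solution with $f\equiv\lambda$, and first take $\lambda>\Lam h(\Om)$. By the definition of the Cheeger constant there is a set $E\Subset\Om$ of finite perimeter with $0<|E|<\infty$ and $P(E)/|E|<\lambda/\Lam$, that is, $\lambda|E|>\Lam P(E)$. Since $\int_E f=\lambda|E|$, this violates \eqref{eq:giusti}. The case $\lambda<-\Lam h(\Om)$ is identical, because \eqref{eq:giusti} carries an absolute value. Nothing in this argument involves $g$, which gives the phrase ``whatever the boundary datum''.

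For (ii), I use the intermediate inequality \eqref{eq:fluxineq} directly instead of passing to sets. If $u$ solves the problem with datum $\mu f$, then for every admissible $\psi\ge0$ in \eqref{eq:Lambdaf} we have $\mu\int f\psi\le\Lam\int|\nb\psi|$. This gives $\mu\le\Lam\int|\nb\psi|/\int f\psi$, and taking the infimum yields $\mu\le\Lam_f$. Hence there is no solution for $\mu>\Lam_f$. It should also be checked that $\Lam_f$ is a genuine threshold, i.e.\ that the admissible class is nonempty. This holds because $f\ge0$, $f\not\equiv0$, so some nonnegative bump $\psi$ has $\int f\psi>0$. Test functions in $C^\infty_c$ are legitimate since $\Lam<\infty$ and Remark \ref{rem:welldef} applies.

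For (iii), the minimizer exists by Theorem \ref{thm:min}. Assumption \ref{ass:A} holds automatically by Proposition \ref{prop:Aholds}(ii), since $\Lam<\infty$ implies $\Phi(\sg)<\infty$ by Lemma \ref{lem:elementary}(iv), provided $L\le\sg$; this compatibility assumption is implicit throughout the section. If we had $\|\nb u\|_\infty<\sg$, then Corollary \ref{cor:VI2PDE} would make $u$ a weak solution, contradicting (i) or (ii). Hence $\|\nb u\|_\infty=\sg$.

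The only delicate point is the strict inequality in (i). Because $h(\Om)$ is defined as an infimum, it may not be attained, so I must produce a set with $\lambda|E|>\Lam P(E)$ strictly. This is exactly what choosing $E$ with ratio below $\lambda/\Lam$ achieves, and it is possible precisely because the inequality $|\lambda|>\Lam h(\Om)$ is strict.
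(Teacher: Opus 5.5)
Your proposal is correct and follows the paper's route: (i) and (ii) come straight from the flux bound \eqref{eq:fluxineq} (for (i) you go through its set version \eqref{eq:giusti}, which is the same thing), and (iii) is the contradiction argument via Corollary \ref{cor:VI2PDE}. Your extra remarks on the strict inequality, on the nonemptiness of the class in \eqref{eq:Lambdaf}, and on the compatibility $L\le\sg$ needed for the minimizer to exist are useful details that the paper's two-line proof leaves implicit.
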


\begin{proof}
(i) and (ii) are immediate from \eqref{eq:fluxineq} and the definitions of
$h(\Om)$ and of $\Lam_{f}$; (iii) can be deduced by contradiction using  Corollary
\ref{cor:VI2PDE}.
\end{proof}

\begin{remark}{
The implication in Corollary \ref{cor:cheeger}(iii) cannot be reversed:
saturation of the gradient and nonexistence are two distinct events. Indeed on
$\Sigma_{u}$ the flux $\aaa(\nb u)$ is still well defined, of modulus $\Lam$, so
that the weak formulation \eqref{eq:weak} retains a meaning there; what fails,
in general, is only the possibility of choosing the Lagrange multiplier of the
constraint equal to zero, see Remark \ref{rem:cellina}. Theorem
\ref{thm:radial}(i) below exhibits, at the critical value $\lambda=\Lam h(B_{R})$, a
weak solution whose gradient attains $\sg$.}
\end{remark}

\begin{remark}\label{rem:comparisons}
Three comments are in order.

\emph{(a) The $1$-Laplacian replaces the $2$-Laplacian.} In \cite[Theorem
2.6]{P2010} the threshold for the zeroth order problem is expressed through
the first eigenvalue $\lambda_{1}(A,f)$ of the weighted $2$-Laplacian. Here
the same proof, testing the equation against a suitable positive function
and using the bound on the nonlinearity, produces the quantity
\eqref{eq:Lambdaf}, which is $\Lam$ times the first eigenvalue of the
$1$-Laplacian relative to the weight $f$ (\cite{KaSch,BSS2026}), and reduces to $\Lam h(\Om)$ for
$f\equiv1$ by the coarea formula. This is the precise sense in which the
present problem is the ``gradient'' counterpart of \cite{P2010}.

\emph{(b) Giusti's condition.} Inequality \eqref{eq:giusti} is formally
identical to the classical necessary condition
$|\int_{E}H|\le P(E)$ for the solvability of the prescribed mean curvature
equation $\dive(\nb u/\sqrt{1+|\nb u|^{2}})=H$ in Euclidean space, where the
flux is bounded by $1$ \cite{Giusti78,Giusti,MassariMiranda}. 

 The same argument, which only uses the flux bound, gives it as the case $\Lambda = 1$ (the Euclidean operator is outside our class, but see Remark \ref{rem:euclcheck} below)

\emph{(c) Gauss' law.} In the electrostatic case 
one has $\mathbf E=-\nb u$, $\mathbf D=\aaa(\nb u)$ and $f=\rho$, and
\eqref{eq:giusti} is nothing but $\int_{E}\rho=\oint_{\partial E}\mathbf
D\cdot\nu\le\|\mathbf D\|_{\infty}P(E)$: a constitutive law with a
\emph{saturating displacement} can only support charge distributions obeying
an isoperimetric bound (see Section \ref{sec:applications} below).
\end{remark}

\begin{remark}[Fenchel--Legendre duality]\label{rem:legendre}
The energy densities of the two mean curvature operators,
$\Phi_{E}(t)=\sqrt{1+t^{2}}-1$ and $\Phi_{M}(t)=1-\sqrt{1-t^{2}}$, are
Fenchel--Legendre conjugate. As is well known, this is equivalent to  the fact that $\varphi_{M}=\varphi_{E}^{-1}$. Conjugation therefore exchanges the
two structural constants (the saturation constant $\sigma$ and the total flux constant $\Lambda$) of the pair,
\[
(\sg,\Lam)(\Phi_{E})=(\infty,1),\qquad (\sg,\Lam)(\Phi_{M})=(1,\infty),
\]
that is, it exchanges a constraint on the gradient with a constraint on the flux,
and this is precisely why exactly one of the two operators carries the
obstruction of Theorem \ref{thm:nonexistence}.

We stress that this exchange is a feature of the pair above and not a general
rule. If $\sg(\Phi)<\infty$ and $\Phi(\sg)<\infty$, then
$\Phi^{*}(y)=\sup_{t\le\sg}(ty-\Phi(t))$ is finite for every $y$, so that
$\sg(\Phi^{*})=+\infty$ whatever the value of $\Lam(\Phi)$; for the logarithmic
operator of Example \ref{ex:log}, for instance, $\Phi^{*}$ is finite on the whole
of $\R$ and affine of slope $\sg=1$ for $y\ge\Lam=2$. The involution
$(\sg,\Lam)\mapsto(\Lam,\sg)$ holds between the family $\{\sg<\infty,\
\Lam=\infty\}$, which contains the regime (H2), and the family
$\{\sg=\infty,\ \Lam<\infty\}$, which contains the Euclidean operator, and it is
in this sense that the Minkowski and the Euclidean mean curvature operators are
dual to one another. The exchange is displayed in Figure \ref{fig:legendre}.
\end{remark}

\begin{figure}[ht]
\centering
\begin{tikzpicture}
\begin{axis}[width=.46\textwidth,height=.31\textwidth,
  xmin=0,xmax=2.6,ymin=0,ymax=1.75,
  xtick={1},xticklabels={$\sg=1$},ytick=\empty,
  xlabel={$t$},title={\normalfont$\Phi_{M}(t)=1-\sqrt{1-t^{2}}$}]
\fill[Shade] (axis cs:1,0) rectangle (axis cs:2.6,1.75);
\draw[gray!55,dashed] (axis cs:1,0)--(axis cs:1,1.75);
\addplot[RoyalBlue,domain=0:0.99999,samples=250,smooth] {1-sqrt(1-x*x)};
\addplot[RoyalBlue,only marks,mark=*,mark size=1.1] coordinates {(1,1)};
\node[RoyalBlue,fig label,anchor=west,align=left] at (axis cs:1.08,1.0)
  {$\Lam=+\infty$:\\ vertical tangent};
\end{axis}
\end{tikzpicture}\hspace{\figgap}
\begin{tikzpicture}
\begin{axis}[width=.46\textwidth,height=.31\textwidth,
  xmin=0,xmax=2.6,ymin=0,ymax=1.75,
  xtick={1},xticklabels={$1$},ytick=\empty,
  xlabel={$y$},title={\normalfont$\Phi_{E}(y)=\sqrt{1+y^{2}}-1$}]
\draw[gray!55,dashed] (axis cs:0,0)--(axis cs:2.6,1.6);
\addplot[BrickRed,domain=0:2.6,samples=200,smooth] {sqrt(1+x*x)-1};
\node[gray!70,fig label,anchor=north west] at (axis cs:1.62,0.62)
  {slope $\Lam=1$};
\node[BrickRed,fig label,anchor=north west,align=left] at (axis cs:0.06,1.70)
  {$\sg=+\infty$:\\ no constraint};
\end{axis}
\end{tikzpicture}
\caption{Fenchel--Legendre duality between the two mean curvature energies,
$\Phi_{E}^{*}=\Phi_{M}$. The energy of the Minkowski operator is finite on a
bounded interval and reaches its endpoint with a vertical tangent; its
conjugate, the energy of the Euclidean operator, is finite everywhere and has
the finite recession slope $1$. Conjugation exchanges the endpoint of the
domain with the recession slope, that is $(\sg,\Lam)=(1,\infty)$ with
$(\sg,\Lam)=(\infty,1)$: a constraint on the gradient becomes a constraint on
the flux, and only the latter obstructs solvability.}
\label{fig:legendre}
\end{figure}

\begin{example}[the logarithmic operator: the model case of (H3)]\label{ex:log}
The model family \eqref{eq:model} never realizes the regime (H3), since
$\varphi(t)=t(1-t^{q})^{-\alpha}\to+\infty$ as $t\to1^{-}$ for every
$\alpha>0$. A model case for (H3), which is the exact counterpart of
\cite[Example 2]{P2010}, is obtained by taking
\[
a_{2}=1,\qquad a_{p}=\frac{1}{(p-1)(p-2)}\quad (p\ge3) .
\]
Then $\sg=1$ and, using $\sum_{k\ge2}\frac{t^{k}}{k(k-1)}=t+(1-t)\log(1-t)$,
\begin{equation}\label{eq:logmodel}\begin{array}{l} \displaystyle 
\varphi(t)=2t+(1-t)\log(1-t),\qquad
\mathcal A(t) =2+\frac{(1-t)\log(1-t)}{t},\qquad \\ \\  \displaystyle 
\Phi(t)=t^{2}-\frac{1}{4}+\frac{(1-t)^{2}}{4}-\frac{(1-t)^{2}}{2}\log(1-t),\end{array}
\end{equation}
so that
\[
\Lam=\varphi(1^{-})=2<\infty,\qquad \Phi(1)=\tfrac34<\infty .
\]
The corresponding equation,
\[
-\dive\Big(\Big[2+\frac{(1-|\nb u|)\log(1-|\nb u|)}{|\nb u|}\Big]\nb u\Big)=f,
\qquad |\nb u|\le1,
\]
is uniformly elliptic on the admissible set, $1\le\mathcal A\le2$,   and
nevertheless, by Corollary \ref{cor:cheeger}, it has no weak solution as soon
as $f\equiv\lambda>2h(\Om)$. The obstruction is not due to a degeneracy of the
operator but to the finiteness of the total flux.
\end{example}

\subsection{The saturation region}

When $\Lam<\infty$ and the datum is large, the minimizer survives but the
equation does not, and the difference between the two is concentrated on
\begin{equation}\label{eq:satset}
\Sigma_{u}:=\{x\in\Om:\ |\nb u(x)|=\sg\},
\end{equation}
which we call the \emph{saturation region}. On $\Om\setminus\Sigma_{u}$ the
variational inequality \eqref{eq:VI} is an equation, while on $\Sigma_{u}$ the
constraint carries a Lagrange multiplier. This is the exact analogue of the
flat zones $\{v=\sg\}$ of \cite[\S5]{P2010}, and it is a familiar object in
several applied contexts: the \emph{light rays} along which a maximal
spacelike hypersurface becomes lightlike \cite{BS1982}, the \emph{plastic
regions} of elastic--plastic torsion, where the gradient constrained problem
is classically reformulated as an obstacle problem with obstacle
$\sg\dist(x,\partial\Om)$ \cite{BrezisSibony}, and the saturation zones of
Born--Infeld electrostatics near concentrated charges. In Section \ref{sec:radial}
we compute $\Sigma_{u}$ explicitly in the radial case.

\section{The radial problem: explicit solution and sharpness of the
threshold}\label{sec:radial}

Let $\Om=B_{R}\subset\R^{N}$, $g\equiv0$ and let $f=f(|x|)\ge0$ be radial and
bounded. Set
\begin{equation}\label{eq:calF}
\mathcal F(s):=\frac{1}{s^{N-1}}\int_{0}^{s}f(r)\,r^{N-1}\,dr ,
\qquad s\in(0,R),
\end{equation}
which for $f\equiv\lambda$ reduces to $\mathcal F(s)=\lambda s/N$.

\begin{theorem}\label{thm:radial}
Under the above assumptions the minimizer $u$ of $F$ on $S_{0}$ is radial,
$u(x)=U(|x|)$, and is given explicitly by
\begin{equation}\label{eq:radialsol}
U(r)=\int_{r}^{R}\varphi^{-1}\big(\min\{\mathcal F(s),\Lam\}\big)\,ds ,
\qquad 0\le r\le R,
\end{equation}
with the convention $\varphi^{-1}(\Lam)=\sg$. Consequently:
\begin{enumerate}
\item[(i)] if $\mathcal F\le\Lam$ a.e.\ in $(0,R)$, then $u$ is the unique
weak solution of \eqref{eq:main} and
$\|\nb u\|_{\infty}=\varphi^{-1}\big(\sup\mathcal F\big)$, which is $<\sg$ if and
only if $\sup_{(0,R)}\mathcal F<\Lam$;
\item[(ii)] if $\mathcal F(s)>\Lam$ on a set of positive measure, then
\eqref{eq:main} has no weak solution and the saturation region is
\[
\Sigma_{u}=\{x:\ \mathcal F(|x|)\ge\Lam\},
\]
a spherical shell of positive measure;
\item[(iii)] for $f\equiv\lambda>0$ a weak solution exists if and only if
\[
\lambda\le\Lam\,h(B_{R})=\frac{N\Lam}{R},
\]
and for $\lambda>N\Lam/R$ the saturation region is the shell
$\{N\Lam/\lambda<|x|<R\}$. In particular the threshold of Corollary
\ref{cor:cheeger} is sharp.
\end{enumerate}
\end{theorem}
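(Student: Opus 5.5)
Radiality comes first. Since $f$ and $g\equiv0$ are rotation invariant and $G$ depends only on $|\xi|$, the composition $u\circ R$ is a minimizer in $S_{0}$ for every rotation $R$. Uniqueness (Theorem \ref{thm:min}) then gives $u\circ R=u$, so $u(x)=U(|x|)$ with $U$ Lipschitz, $|U'|\le\sg$ and $U(R)=0$.

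To identify $U$, I will avoid solving the radial Euler--Lagrange equation directly and instead verify minimality through convexity, using a calibration field. Set $z(x)=-\min\{\mathcal F(|x|),\Lam\}\,x/|x|$ and let $U$ be defined by \eqref{eq:radialsol}. On $\{\mathcal F<\Lam\}$ we have $U'=-\varphi^{-1}(\mathcal F)$, so $z=\aaa(\nb u)$ there. On $\{\mathcal F\ge\Lam\}$ we have $|\nb u|=\sg$ and $z=\Lam\,\nb u/\sg$, which lies in $\partial G(\nb u)$ by Remark \ref{rem:cellina}, at least when $\Lam<\infty$. In either case $G(\nb v)\ge G(\nb u)+z\cdot\nb(v-u)$ for every $v\in S_{0}$; on the saturated set this inequality is Lemma \ref{lem:dirder} combined with convexity.

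If we had $-\dive z=f$, then integrating against $v-u\in W^{1,\infty}_{0}$ would give $F(v)\ge F(u)$. This holds exactly where $\mathcal F\le\Lam$, because $-\dive\big(\mathcal F(|x|)\,x/|x|\big)=-f$ in the sign convention above; indeed $(s^{N-1}\mathcal F)'=f s^{N-1}$. The main obstacle is the truncation: the field with modulus $\min\{\mathcal F,\Lam\}$ has divergence smaller than $f$ where $\mathcal F>\Lam$. To handle this I will use the multiplier idea of Remark \ref{rem:cellina}. On $\Sigma_{u}$, take $z=-\mathcal F\,x/|x|$, which has $|z|=\mathcal F\ge\Lam$ and still lies in $\partial G(\nb u)=\{t\nb u:\ t\ge\Lam/\sg\}$. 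This field has $-\dive z=f$ exactly and is a subgradient selection everywhere, so the convexity argument gives $F(v)\ge F(u)$ for all $v$. When $\Lam=\infty$ the truncation is never active and the simple formula already works.

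This establishes \eqref{eq:radialsol}. To complete the sign bookkeeping, I need $U$ to be decreasing and to check that the boundary term vanishes; both follow because $v-u$ vanishes on $\partial B_R$ and $z$ is bounded, since $f$ is bounded.

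The consequences then follow from the formula.

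\emph{(i)} If $\mathcal F\le\Lam$ a.e., then $z=\aaa(\nb u)$ a.e. and $-\dive z=f$, so $u$ is a weak solution. Uniqueness follows from the converse part of Corollary \ref{cor:VI2PDE}, which applies because the flux $\aaa(\nb u)$ is bounded by $\sup\mathcal F$. The identity $\|\nb u\|_{\infty}=\varphi^{-1}(\sup\mathcal F)$ holds by monotonicity of $\varphi^{-1}$ (Lemma \ref{lem:elementary}).

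\emph{(ii)} If $\mathcal F>\Lam$ on a set of positive measure, suppose a weak solution $w$ existed. Since $\Lam<\infty$ here, Corollary \ref{cor:VI2PDE} shows that $w$ is the minimizer, so $w=u$ and $w$ is radial. Testing the weak formulation with radial functions yields $\varphi(|U'|)=\mathcal F$ a.e., which is impossible on the set where $\mathcal F>\Lam$. Alternatively, apply Theorem \ref{thm:nonexistence} to $E=B_{s}$, which gives $|\int_{B_s}f|=|\partial B_1|\,s^{N-1}\mathcal F(s)\le\Lam P(B_s)$, contradicting $\mathcal F(s)>\Lam$. The identification of $\Sigma_{u}$ is read off from \eqref{eq:radialsol}; it is a shell when $\mathcal F$ is monotone, as it is for constant $f$.

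\emph{(iii)} For $f\equiv\lambda$ we have $\mathcal F(s)=\lambda s/N$. Thus $\sup\mathcal F=\lambda R/N$, and $\mathcal F\ge\Lam$ exactly when $s\ge N\Lam/\lambda$. Together with $h(B_{R})=N/R$ this gives the stated threshold and shows it is sharp.
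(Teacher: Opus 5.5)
Your proof is correct, but its central step takes a different route from the paper's.

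\textbf{The paper's route.} The paper first uses uniqueness to get radiality, then writes $u=U(|x|)$ and $v=-U'$. By Fubini it rewrites the energy on radial competitors as $\omega_{N-1}\int_0^R s^{N-1}\big[\Phi(v(s))-\mathcal F(s)v(s)\big]\,ds$. The problem then decouples pointwise in $s$. Minimizing the convex function $v\mapsto\Phi(v)-\mathcal F(s)v$ over $[0,\sg]$ produces \eqref{eq:radialsol} directly, and saturation appears as a boundary minimum whenever $\mathcal F(s)\ge\Lam$.

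\textbf{Your route.} You guess the candidate and certify it by a calibration, using the untruncated field $z=-\mathcal F(|x|)\,x/|x|$. This field is bounded and satisfies $-\dive z=f$ in $B_R$. It is also an a.e.\ selection of $\partial G(\nb u)$: off $\Sigma_u$ it equals $\aaa(\nb u)$, and on $\Sigma_u$ it equals $(\mathcal F/\sg)\nb u$ with $\mathcal F\ge\Lam$. There the subgradient inequality follows from Lemma \ref{lem:dirder} together with $\nb u\cdot\nb(v-u)\le0$. Hence $F(v)\ge F(u)$ for \emph{every} $v\in S_0$, by convexity and one integration by parts.

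\textbf{What each buys.} The paper's argument is shorter and constructive. It needs neither the subdifferential at the constraint nor a divergence computation. Yours needs no radial reduction at all, so your first paragraph becomes redundant. It handles non-radial competitors directly. It also makes explicit the Lagrange multiplier of Remark \ref{rem:cellina}, namely $s=(\mathcal F(|x|)-\Lam)/\sg\ge0$ on $\Sigma_u$. This shows concretely that the optimal flux has modulus $\mathcal F(|x|)$, which exceeds $\Lam$ on $\{\mathcal F>\Lam\}$; that is precisely the mechanism behind (ii).

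For (ii), your second argument (Theorem \ref{thm:nonexistence} applied to $E=B_s$) is the paper's. Your first argument, testing with radial functions to force $\varphi(|U'|)=\mathcal F$ a.e., is a valid alternative.

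Two small remarks.
\begin{enumerate}
\item In (i), the converse part of Corollary \ref{cor:VI2PDE} must be applied to the competing weak solution $w$. So what is needed is that $\int\aaa(\nb w)\cdot\nb(v-w)$ be well defined; boundedness of $\aaa(\nb u)$ is not the relevant hypothesis. This holds automatically when $\Lam<\infty$ by Remark \ref{rem:welldef}. When $\Lam=\infty$, the paper appeals to the corollary just as briefly as you do.
\item Your caveat on the shape of $\Sigma_u$ is well taken. For $N\ge2$ and general radial $f$, $\mathcal F$ need not be monotone. For example, $f=M\chi_{B_\rho}$ can give a shell detached from $\partial B_R$, and adding a second bump of $f$ further out can make $\{\mathcal F\ge\Lam\}$ a union of several disjoint annuli. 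In that case ``a spherical shell'' is only accurate for monotone $\mathcal F$, such as constant $f$.
\end{enumerate}
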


\begin{proof}
Since $F$ and $S_{0}$ are invariant under rotations and the minimizer is
unique, $u$ is radial. Write $u(x)=U(|x|)$ with $U(R)=0$ and set
$v(s)=-U'(s)$, so that $U(r)=\int_{r}^{R}v$ and, by Fubini,
\[\begin{array}{l} \displaystyle 
\int_{B_{R}}fu=\omega_{N-1}\int_{0}^{R}f(r)r^{N-1}\!\!\int_{r}^{R}\!v(s)\,ds\,dr
\\ \\  \displaystyle =\omega_{N-1}\int_{0}^{R}v(s)\Big(\int_{0}^{s}\!f r^{N-1}dr\Big)ds
=\omega_{N-1}\int_{0}^{R}s^{N-1}\mathcal F(s)v(s)\,ds ,\end{array}
\]
$\omega_{N-1}$ denoting the surface measure of the unit sphere. Therefore
\[
F(u)=\omega_{N-1}\int_{0}^{R}s^{N-1}\Big[\Phi(v(s))-\mathcal F(s)v(s)\Big]ds ,
\]
and the minimization \emph{decouples pointwise} in $s$ over the set
$\{0\le v\le\sg\}$ (that $v\ge0$ at the minimum is clear since $\mathcal
F\ge0$). The function $v\mapsto\Phi(v)-\mathcal F(s)v$ is strictly convex with
derivative $\varphi(v)-\mathcal F(s)$; hence its minimum over $[0,\sg]$ is
attained at $v=\varphi^{-1}(\mathcal F(s))$ if $\mathcal F(s)<\Lam$, and at
$v=\sg$ if $\mathcal F(s)\ge\Lam$, which is \eqref{eq:radialsol}.

 {For (i), the field $\aaa(\nb u)$ has modulus $\varphi(|\nb u|)=\mathcal
F(|x|)\le\Lam$ and satisfies $-\dive\aaa(\nb u)=f$ in the weak sense by
construction, so that $u$ is a weak solution; it is the unique one because any
weak solution is the minimizer, by the converse part of Corollary
\ref{cor:VI2PDE}. Note that the strict bound $\|\nb u\|_{\infty}<\sg$ is
\emph{not} needed here: the constraint may be attained, provided the flux stays
below $\Lam$.} 

For (ii), no weak solution can exist: by Theorem \ref{thm:nonexistence}
applied with $E=B_{s}$, since $\int_{B_{s}}f=\omega_{N-1}s^{N-1}\mathcal F(s)$
and $P(B_{s})=\omega_{N-1}s^{N-1}$, condition \eqref{eq:giusti} reads exactly
$\mathcal F(s)\le\Lam$, which fails for $s$ in a set of positive measure. The
minimizer \eqref{eq:radialsol} then satisfies $|\nb u|=\sg$ precisely on
$\{\mathcal F\ge\Lam\}$. 

Finally, (iii) is the case $\mathcal F(s)=\lambda s/N$,
together with $h(B_{R})=N/R$.
\end{proof}

\begin{remark}
The proof shows that in the radial case the necessary condition
\eqref{eq:giusti}, tested only on concentric balls, is also \emph{sufficient}.
It also exhibits, for every $N$ and every operator in the
regime (H3), an explicit minimizer with a saturation region of positive
measure, displayed in Figure \ref{fig:radialsat}.
\end{remark}

\begin{figure}[ht]
\centering
\begin{tabular}{@{}c@{\hspace{2.6em}}c@{}}
\begin{tikzpicture}[scale=1.55,baseline=(current bounding box.center)]
\begin{scope}
  \clip (0,0) circle (1.3);
  \fill[BrickRed!16] (0,0) circle (1.3);
  \fill[white] (0,0) circle (1.04);
\end{scope}
\draw[black,line width=.7pt] (0,0) circle (1.3);
\draw[BrickRed,line width=.6pt,dashed] (0,0) circle (1.04);
\draw[-{Stealth[length=4pt]},gray!70,line width=.4pt] (0,0)--(1.04,0);
\node[gray!70,fig label,anchor=north] at (0.52,-0.03) {$r_{*}$};
\draw[-{Stealth[length=4pt]},gray!70,line width=.4pt] (0,0)--(-1.3,0);
\node[gray!70,fig label,anchor=north] at (-0.65,-0.03) {$R$};
\node[BrickRed,fig label] at (0.66,0.93) {$\Sigma_{u}$};
\fill (0,0) circle (.8pt);
\end{tikzpicture}
&
\begin{tikzpicture}[baseline=(current bounding box.center)]
\begin{axis}[width=.50\textwidth,height=.34\textwidth,
  xmin=0,xmax=1.02,ymin=0,ymax=1.22,
  xtick={0,0.8,1},xticklabels={$0$,$r_{*}$,$R$},
  ytick={1},yticklabels={$\sg$},
  xlabel={$r$},ylabel={$|U'(r)|$}]
\fill[BrickRed!16] (axis cs:0.8,0) rectangle (axis cs:1,1.22);
\draw[gray!55,dashed] (axis cs:0,1)--(axis cs:1.02,1);
\addplot[BrickRed,smooth] coordinates {(0.0000,0.0000) (0.0125,0.0308) (0.0250,0.0606) (0.0375,0.0896) (0.0500,0.1178) (0.0625,0.1452) (0.0750,0.1718) (0.0875,0.1978) (0.1000,0.2230) (0.1125,0.2477) (0.1250,0.2717) (0.1375,0.2951) (0.1500,0.3180) (0.1625,0.3403) (0.1750,0.3622) (0.1875,0.3835) (0.2000,0.4043) (0.2125,0.4246) (0.2250,0.4445) (0.2375,0.4640) (0.2500,0.4830) (0.2625,0.5017) (0.2750,0.5199) (0.2875,0.5377) (0.3000,0.5552) (0.3125,0.5723) (0.3250,0.5890) (0.3375,0.6053) (0.3500,0.6214) (0.3625,0.6370) (0.3750,0.6524) (0.3875,0.6674) (0.4000,0.6822) (0.4125,0.6966) (0.4250,0.7107) (0.4375,0.7245) (0.4500,0.7380) (0.4625,0.7512) (0.4750,0.7641) (0.4875,0.7768) (0.5000,0.7891) (0.5125,0.8012) (0.5250,0.8130) (0.5375,0.8246) (0.5500,0.8358) (0.5625,0.8468) (0.5750,0.8576) (0.5875,0.8680) (0.6000,0.8782) (0.6125,0.8881) (0.6250,0.8978) (0.6375,0.9072) (0.6500,0.9163) (0.6625,0.9251) (0.6750,0.9337) (0.6875,0.9420) (0.7000,0.9499) (0.7125,0.9576) (0.7250,0.9650) (0.7375,0.9720) (0.7500,0.9786) (0.7625,0.9849) (0.7750,0.9906) (0.7875,0.9958) (0.8000,1.0000) (0.8125,1.0000) (0.8250,1.0000) (0.8375,1.0000) (0.8500,1.0000) (0.8625,1.0000) (0.8750,1.0000) (0.8875,1.0000) (0.9000,1.0000) (0.9125,1.0000) (0.9250,1.0000) (0.9375,1.0000) (0.9500,1.0000) (0.9625,1.0000) (0.9750,1.0000) (0.9875,1.0000) (1.0000,1.0000)};
\node[fig label,anchor=south east] at (axis cs:0.78,0.30)
  {$\varphi^{-1}(\lambda r/N)$};
\end{axis}
\end{tikzpicture}
\\[4pt]
{\footnotesize (a)} & {\footnotesize (b)}
\end{tabular}
\caption{The saturation region of Theorem \ref{thm:radial} for the logarithmic
operator of Example \ref{ex:log} in $B_{1}\subset\R^{2}$: (a) the saturation
shell $\Sigma_{u}$; (b) the modulus of the gradient along a radius. Here $\sg=1$,
$\Lam=2$ and the threshold is $\Lam h(B_{1})=4$; we take $\lambda=5$, so that
the inner radius of the shell is $r_{*}=N\Lam/\lambda=0.8$. On $\{r<r_{*}\}$ the equation is satisfied and the
gradient is strictly below the constraint; on the shell $\Sigma_{u}$ the
gradient saturates, the flux is stuck at $\Lam$, and no weak solution of
\eqref{eq:main} exists.}
\label{fig:radialsat}
\end{figure}

\begin{remark}[Born--Infeld]
For the Minkowski operator \eqref{eq:mink}, $\varphi(t)=t/\sqrt{1-t^{2}}$ and
$\varphi^{-1}(y)=y/\sqrt{1+y^{2}}$, so that \eqref{eq:radialsol} gives, for
$f\equiv\lambda$, the classical explicit radial solution
\[
U(r)=\int_{r}^{R}\frac{\lambda s/N}{\sqrt{1+(\lambda s/N)^{2}}}\,ds
=\frac{N}{\lambda}\Big[\sqrt{1+\Big(\frac{\lambda R}{N}\Big)^{2}}
-\sqrt{1+\Big(\frac{\lambda r}{N}\Big)^{2}}\Big],
\]
which exists for every $\lambda>0$, in agreement with $\Lam=+\infty$; its
gradient tends to $1$ as $\lambda\to\infty$ but never reaches it.
\end{remark}

\begin{remark}[a check outside the class]\label{rem:euclcheck}
The proof of Theorem \ref{thm:radial} only uses that $\varphi$ is an increasing
homeomorphism of $[0,\sg)$ onto $[0,\Lam)$, and it therefore applies verbatim to
operators which are not series of $p$-Laplacians. Consider the \emph{Euclidean}
mean curvature operator: there $\varphi(y)=y/\sqrt{1+y^{2}}$, so that
$\sg=+\infty$ but $\Lam=\varphi(+\infty)=1$. Formula \eqref{eq:radialsol} then
predicts that the radial Dirichlet problem in $B_{R}$ with $f\equiv\lambda$ is
solvable if and only if
\[
\lambda<\Lam\,h(B_{R})=\frac{N}{R};
\]
note that here the inequality is strict, precisely because $\sg=+\infty$: at
$\lambda=N/R$ formula \eqref{eq:radialsol} would produce a function with
unbounded gradient, whereas for $\sg<\infty$ the endpoint is admissible and
Theorem \ref{thm:radial}(iii) holds with a non-strict inequality. For $R=1$ the
above is exactly the sharp condition $|H|<N$ obtained in
\cite[Theorem 2.2 and the subsequent remark]{BJM2009b} by a fixed point
argument, together with the explicit solution
$u(r)=\frac{N}{H}\big(\sqrt{1-H^{2}/N^{2}}-\sqrt{1-H^{2}r^{2}/N^{2}}\big)$.
This is an independent confirmation that the relevant quantity is the
saturation flux $\Lam$, here finite because the displacement saturates even
though the gradient is unconstrained, and not the energy, and it makes
precise the observation of \cite[Introduction]{Bayard} that euclidean
obstructions disappear in the lorentzian context: they disappear exactly when
$\Lam=+\infty$.
\end{remark}

\section{The one dimensional problem}\label{sec:1d}

In dimension one everything can be computed. Let $\Om=(d,e)$, $u(d)=a$,
$u(e)=b$, $f\in L^{1}(d,e)$ and let
\[
\hat\varphi(s)=\operatorname{sign}(s)\,\varphi(|s|)
\]
be the odd extension of the flux, an increasing homeomorphism of $(-\sg,\sg)$
onto $(-\Lam,\Lam)$. Set $\mathcal I(x)=\int_{d}^{x}f$.

\begin{proposition}\label{prop:1d}
A function $u$ is a weak solution of \eqref{eq:main} on $(d,e)$ if and only if
there exists $c\in\R$ with
\begin{equation}\label{eq:1dsol}
u'(x)=\hat\varphi^{-1}\big(c-\mathcal I(x)\big)\ \text{ a.e.},\qquad
\|c-\mathcal I\|_{L^{\infty}(d,e)}{\le}\Lam,\qquad
\Theta(c):=\int_{d}^{e}\hat\varphi^{-1}\big(c-\mathcal I\big)=b-a ,
\end{equation}
 {with the convention $\hat\varphi^{-1}(\pm\Lam)=\pm\sg$.}
The map $\Theta$ is continuous and strictly increasing on the interval
$I:={\big[}\max\mathcal I-\Lam,\ \min\mathcal I+\Lam{\big]}$, which is
nonempty if and only if
\begin{equation}\label{eq:osc}
\operatorname{osc}_{[d,e]}\mathcal I{\le}2\Lam .
\end{equation}
Consequently the problem is solvable if and only if \eqref{eq:osc} holds and
$b-a\in\Theta(I)$, and the solution is then unique.
\end{proposition}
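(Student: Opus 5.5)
The proof reduces the weak formulation in one dimension to the statement that the flux is an absolutely continuous primitive of $-f$. Let $u\in S_g$ be a weak solution. In the regimes where Definition \ref{def:weak} is meaningful, the flux $z:=\hat\varphi(u')$ should be in $L^\infty(d,e)$, with $|z|\le\Lam$ when $\Lam<\infty$. In the regime $\Lam=\infty$ the test class shrinks, and I would argue that $z\in L^1_{\rm loc}$; this is the first place where care is needed.

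The identity $\int z\psi'=\int f\psi$ for all $\psi\in C^\infty_c(d,e)$ says that $z'=-f$ in $\mathcal D'$. Therefore $z+\mathcal I$ has zero distributional derivative and equals a constant $c$ a.e. Thus $z=c-\mathcal I$, which is continuous and bounded on $[d,e]$. Since $|z|=\varphi(|u'|)\le\Lam$, we get $\|c-\mathcal I\|_\infty\le\Lam$. Inverting the odd homeomorphism gives $u'=\hat\varphi^{-1}(c-\mathcal I)$ a.e., with the stated convention at $\pm\Lam$: when $\Lam<\infty$, $|z|=\Lam$ forces $|u'|=\sg$. Integrating from $d$ to $e$ gives $\Theta(c)=b-a$.

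For the converse, take $c$ with $\|c-\mathcal I\|_\infty\le\Lam$ and define $u(x)=a+\int_d^x\hat\varphi^{-1}(c-\mathcal I)$. Then $|u'|\le\sg$, $u(e)=b$ by $\Theta(c)=b-a$, and $\hat\varphi(u')=c-\mathcal I$ is bounded. Hence every $\psi\in W^{1,\infty}_0$ is an admissible test, and integrating by parts (using $\mathcal I'=f$ and $\psi(d)=\psi(e)=0$) gives \eqref{eq:weak}. So $u$ is a weak solution.

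It remains to treat $\Theta$ and the interval $I$. The condition $\|c-\mathcal I\|_\infty\le\Lam$ is equivalent to $\max\mathcal I-\Lam\le c\le\min\mathcal I+\Lam$, so $c\in I$, and $I$ is nonempty if and only if $\operatorname{osc}\mathcal I\le2\Lam$. Here $\mathcal I$ is continuous on the compact interval $[d,e]$, so the max and min are attained. On $I$, the extended inverse $\hat\varphi^{-1}:[-\Lam,\Lam]\to[-\sg,\sg]$ is continuous and strictly increasing (for $\Lam=\infty$ the interval is all of $\R$). By dominated convergence with bound $\sg$, $\Theta$ is continuous. It is strictly increasing because $c_1<c_2$ gives a pointwise strict inequality everywhere.

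Solvability is then equivalent to $I\neq\emptyset$ and $b-a\in\Theta(I)$. Uniqueness follows because $c$ is determined by strict monotonicity, and $u$ is determined by $c$ and $u(d)=a$.

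The main obstacle is the derivation of $z'=-f$ when $\Lam=\infty$, since Definition \ref{def:weak} only tests against $\psi$ for which the series converges absolutely. I would handle this by first using $C^\infty_c$ tests supported where $|u'|$ stays below $\sg$. Alternatively, I would observe that the set of admissible tests contains all $\psi$ with $\psi'$ supported where $\varphi(|u'|)$ is integrable, conclude that $z$ is locally a primitive there, and use continuity of $c-\mathcal I$ to bootstrap to boundedness of $z$ on all of $(d,e)$.
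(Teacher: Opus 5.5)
Your treatment of the regime $\Lam<\infty$ is correct and is the paper's own argument (integrate once, invert $\hat\varphi$, dominated convergence for $\Theta$). The same holds for the ``if'' direction, where $c-\mathcal I$ is bounded and so every $\psi\in W^{1,\infty}_0$ is admissible, and for your analysis of $\Theta$ and $I$. The gap is exactly where you flagged it: the ``only if'' direction for $\Lam=\infty$. Your bootstrap cannot close it. Let $U$ be the largest open set on which $\varphi(|u'|)\in L^1_{\rm loc}$. Testing with $\psi\in C^\infty_c(U)$ only gives $z=c_J-\mathcal I$ on each component $J$ of $U$. Nothing in Definition~\ref{def:weak} ties these constants together across $(d,e)\setminus U$. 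That set can have positive measure, or be all of $(d,e)$, and continuity of $c_J-\mathcal I$ on each $J$ says nothing there.

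In fact the claim is false in that regime. Take $\Lam=\infty$, $f\equiv0$, $a=b=0$, and the tent $u(x)=\sg\min\{x-d,e-x\}\in S_g$, with $m=\frac{d+e}{2}$. For $\psi\in W^{1,\infty}_0(d,e)$, the $p$-th term of \eqref{eq:weak} is $a_p\sg^{p-1}\big(\int_d^m\psi'-\int_m^e\psi'\big)=2a_p\sg^{p-1}\psi(m)$. Since $\sum_pa_p\sg^{p-1}=\Lam=\infty$, absolute convergence forces $\psi(m)=0$, and then both sides of \eqref{eq:weak} vanish. If absolute convergence is read as $\sum_pa_p\int|u'|^{p-1}|\psi'|<\infty$, it even forces $\psi\equiv0$. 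So $u$ is a weak solution. Yet $|u'|\equiv\sg$ is not of the form $\hat\varphi^{-1}(c-\mathcal I)$ for any $c\in\R$, and uniqueness fails because $u\equiv0$ is also a solution. The affine function that Corollary~\ref{cor:1dLam} itself accepts as a solution when $L=\sg$ and $f\equiv0$ is another instance.

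The paper's one-line proof (``integrating the equation once'') makes the same tacit assumption you do. What can actually be proved is the statement for $\Lam<\infty$ or, when $\Lam=\infty$, for weak solutions with $\hat\varphi(u')\in L^1_{\rm loc}(d,e)$. A posteriori this is equivalent to $\|u'\|_\infty<\sg$, which is the only situation besides $\Lam<\infty$ in which Remark~\ref{rem:quantifier} says the definition is used. Under that hypothesis your proof is complete.
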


\begin{proof}
Integrating the equation once gives $\hat\varphi(u')=c-\mathcal I$ for some
constant $c$; the constraint $|u'|{\le}\sg$ is equivalent to
$|c-\mathcal I|{\le}\Lam$, and the boundary conditions to $\Theta(c)=b-a$.
Strict monotonicity and continuity of $\Theta$ follow from those of
$\hat\varphi^{-1}$ and dominated convergence.
\end{proof}

Note that \eqref{eq:osc} is precisely condition \eqref{eq:giusti} tested on
intervals $E=(x_{1},x_{2})\Subset(d,e)$, for which $P(E)=2$: {in dimension
one the necessary condition of Theorem \ref{thm:nonexistence} is therefore also
sufficient for homogeneous boundary data}.

\begin{corollary}\label{cor:1dLam}
If $\Lam=+\infty$ (regimes (H1) and (H2)) then $I=\R$,
$\Theta(\R)=(-\sg(e-d),\sg(e-d))$, and the one dimensional problem is solvable
for \emph{every} $f\in L^{1}(d,e)$ if and only if
\[
L=\frac{|b-a|}{e-d}<\sg .
\]
If $L=\sg$ the only element of $S_{g}$ is the affine function $u(x)=a+L(x-d)$,
which solves \eqref{eq:main} if and only if $f\equiv0$.
\end{corollary}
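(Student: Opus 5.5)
The plan is to read everything off Proposition \ref{prop:1d}. When $\Lam=+\infty$, the condition $\|c-\mathcal I\|_{L^{\infty}}\le\Lam$ holds for every $c$, because $\mathcal I$ is continuous on $[d,e]$ and hence bounded. So $I=\R$, and $\Theta$ is defined, continuous and strictly increasing on all of $\R$.

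\textbf{The range of $\Theta$.} As $c\to+\infty$, the quantity $c-\mathcal I(x)\to+\infty$ uniformly in $x$. Since $\hat\varphi^{-1}$ maps $\R=(-\Lam,\Lam)$ increasingly onto $(-\sg,\sg)$, we get $\hat\varphi^{-1}(c-\mathcal I)\to\sg$ uniformly. Hence $\Theta(c)\to\sg(e-d)$, and symmetrically $\Theta(c)\to-\sg(e-d)$ as $c\to-\infty$. For each finite $c$ the integrand satisfies $|\hat\varphi^{-1}(c-\mathcal I)|\le\hat\varphi^{-1}(|c|+\|\mathcal I\|_{\infty})<\sg$ uniformly, so $|\Theta(c)|<\sg(e-d)$ strictly. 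Continuity and strict monotonicity then give, by the intermediate value theorem, $\Theta(\R)=(-\sg(e-d),\sg(e-d))$.

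\textbf{Solvability when $L<\sg$.} By Proposition \ref{prop:1d}, the problem is solvable if and only if $b-a\in\Theta(\R)$, that is, if and only if $|b-a|<\sg(e-d)$, i.e.\ $L<\sg$. This characterization holds for every $f\in L^{1}$, which gives the equivalence stated in the corollary. (If $L>\sg$, then $S_g$ is empty and there is nothing to solve.)

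\textbf{The borderline case $L=\sg$.} Any $v\in S_g$ satisfies $|v'|\le\sg$ and $\int_d^e v'=b-a=\pm\sg(e-d)$. This forces $v'=\pm\sg$ a.e., so $S_g$ consists of the single affine function. For this $u$, the constant flux $\aaa(u')=\hat\varphi(\pm\sg)$ is $\pm\infty$ when $\Lam=\infty$. Definition \ref{def:weak} requires absolute convergence of $\sum_p a_p\int|u'|^{p-2}u'\psi'$, and here $\sum_p a_p\sg^{p-1}=\Lam=\infty$. So the admissible test functions are exactly those with $\int\psi'=0$ in the absolutely convergent sense, namely every $\psi\in W^{1,\infty}_0$ after reordering; but the series diverges unless $\psi'\equiv0$. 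This is the delicate point. I would handle it by noting that the partial sums equal $\varphi_n(\sg)\int\psi'=0$ for every $\psi\in W^{1,\infty}_0$. Read that way, the left-hand side of \eqref{eq:weak} vanishes identically, and the weak formulation reduces to $\int f\psi=0$ for all $\psi$, i.e.\ $f\equiv0$.

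Conversely, if $f\equiv0$ both sides vanish, so the affine function is a solution. The main obstacle is therefore this interpretive step: one must make precise the convergence convention in Definition \ref{def:weak} so that the constant infinite flux integrates to zero against $\psi'$. I would phrase it through the partial sums, which converge (trivially, to $0$).
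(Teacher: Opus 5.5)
Your proposal is correct and follows the route the paper intends. The corollary has no separate proof in the paper: the first part is read off Proposition \ref{prop:1d}, and the borderline case rests on the termwise vanishing $a_p\sg^{p-1}\int_d^e\psi'=0$ recorded in the remark right after the statement. That is exactly your partial-sum reading of Definition \ref{def:weak}. As you note, this convention matters: if one required finiteness of $\sum_p a_p\int|u'|^{p-1}|\psi'|$, only $\psi\equiv0$ would be admissible, and the affine function would solve vacuously for every $f$.

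One sentence needs fixing. You claim that, for every $f\in L^{1}$, solvability is equivalent to $b-a\in\Theta(\R)$. This is false at $L=\sg$, $f\equiv0$, and it contradicts your own last paragraph. The reason is that the \emph{only if} half of Proposition \ref{prop:1d} integrates the equation to $\hat\varphi(u')=c-\mathcal I$ with $c$ finite, which is impossible when $|u'|=\sg$ and $\Lam=\infty$.

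So use Proposition \ref{prop:1d} only for existence when $L<\sg$. Obtain the converse directly: $S_{g}=\emptyset$ when $L>\sg$, and when $L=\sg$ your borderline analysis shows that any $f\not\equiv0$ gives no solution. With the quantifier \emph{for every $f$} placed as in the statement, this is all that is needed.
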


\begin{remark}{
Note that the affine function does solve the \emph{series} equation with $f=0$
even though the non-expanded operator is singular there, since all the terms
$\Delta_{p}u$ vanish separately.}
\end{remark}

\begin{example}[$\mathcal A(t)=(1-t)^{-1}$, i.e.\ $a_{p}\equiv1$]\label{ex:1d}
Here $\sg=1$, $\Phi(t)=-\log(1-t)-t$, $\Phi(1)=\Lam=+\infty$: regime (H1).
One has $\hat\varphi^{-1}(y)=y/(1+|y|)$ and, for $f\equiv k>0$,
\eqref{eq:1dsol} gives
\[
u'(x)=\frac{c-kx}{1+|c-kx|} ,
\]
which integrates to the piecewise logarithmic profiles
\[
u(x)=x-d+a+\frac{1}{k}\log\big(\gamma(d-x)+1\big),\qquad
u(x)=e-x+b+\frac{1}{k}\log\big(\alpha(x-e)+1\big),
\]
glued at the maximum point $x_{*}=c/k$ when $x_{*}\in(d,e)$, with
$\alpha=k/(1+k(e-x_{*}))$ and $\gamma=k/(1+k(x_{*}-d))$; the solution is
$C^{2}$ but not $C^{3}$ at $x_{*}$. By Corollary \ref{cor:1dLam} a solution
exists for every $k$ as long as $|a-b|<e-d$, and the monotone regime
$|a-b|\ge e-d-\frac1k\log(1+k(e-d))$ is the one in which $x_{*}\notin(d,e)$.
\end{example}

\begin{example}[the logarithmic operator of Example \ref{ex:log}]\label{ex:1dlog}
Here $\sg=1$, $\Lam=2$ and $h((0,1))=2$. For $\Om=(0,1)$, $a=b=0$ and
$f\equiv\lambda$ one has $\mathcal I(x)=\lambda x$ and, by symmetry,
$c=\lambda/2$; condition \eqref{eq:osc} reads $\lambda\le4=\Lam h(\Om)$, in
agreement with Corollary \ref{cor:cheeger}. For $\lambda>4$ the minimizer is
\[
u'(x)=\hat\varphi^{-1}\Big(T_{\Lam}\big(\lambda(\tfrac12-x)\big)\Big),
\]
$T_{\Lam}$ being the truncation at level $\Lam$, and its saturation region is
$\{|x-\frac12|>\Lam/\lambda\}$, a pair of intervals on which $u$ is affine with
slope $\pm1$.
\end{example}

\section{Relevance to physical models}\label{sec:applications}

\subsection{Born--Infeld electrostatics and the maximal admissible charge}

In the electrostatic regime of a nonlinear electrodynamics with Lagrangian
density $\mathcal L=\mathcal L(|\mathbf E|)$, the potential $u$, the electric
field $\mathbf E=-\nb u$ and the electric displacement
$\mathbf D=\mathcal L'(|\mathbf E|)\frac{\mathbf E}{|\mathbf E|}$ are related by
Gauss' law $\dive\mathbf D=\rho$. Writing $\mathbf D=\mathcal A(|\mathbf
E|)\mathbf E$ we are exactly in the setting of \eqref{eq:main} with $f=\rho$,
and the two structural constants acquire a physical meaning:
\[
\sg=\text{maximal field strength},\qquad
\Lam=\text{maximal displacement}.
\]
The Born--Infeld theory \cite{BI1934}, $\mathcal
L=b^{2}(1-\sqrt{1-|\mathbf E|^{2}/b^{2}})$, is the case $\sg=b$,
$\Lam=+\infty$: the field is bounded, the displacement is not, and this is
precisely why Born--Infeld electrostatics accommodates arbitrary charge
distributions, including point charges \cite{BdAP}. Theorem
\ref{thm:nonexistence} shows what happens for constitutive laws with
saturating displacement:

\begin{corollary}\label{cor:charge}
Let the constitutive law $\mathbf D=\mathcal A(|\mathbf E|)\mathbf E$ be a
series of $p$-Laplacian type, with maximal displacement $\Lam<\infty$. Then, a
charge density $\rho$ admits an electrostatic potential with $|\mathbf
E|\le\sg$ in $\Om$ only if
\[
\Big|\int_{E}\rho\Big|\le\Lam\,P(E)\qquad\text{for every }E\Subset\Om .
\]
In particular a uniform density $\rho$ in a ball of radius $R$ is admissible
only for $\rho\le N\Lam/R$, i.e.\ the total charge enclosed by a surface
cannot exceed $\Lam$ times its area.
\end{corollary}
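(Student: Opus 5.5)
The plan is to read Corollary \ref{cor:charge} as a direct translation of Theorem \ref{thm:nonexistence} and Theorem \ref{thm:radial}(iii) into electrostatic language.

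First, the dictionary. Assume $\rho$ admits a potential $u$ with $|\mathbf E|=|\nb u|\le\sg$ in $\Om$ and $\dive\mathbf D=\rho$ in the weak sense, where $\mathbf D=\mathcal A(|\mathbf E|)\mathbf E$. Since $\mathbf E=-\nb u$, we get $\mathbf D=-\aaa(\nb u)$. Gauss' law tested against $\psi\in W^{1,\infty}_0(\Om)$ then reads
\[
\int_\Om\aaa(\nb u)\cdot\nb\psi=\int_\Om\rho\psi .
\]
This is exactly \eqref{eq:weak} with $f=\rho$. Since $\Lam<\infty$, Remark \ref{rem:welldef} shows that every term is absolutely convergent, so no restriction on test functions arises. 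For the boundary condition, it suffices to take $g=u|_{\partial\Om}$; then $u\in S_g$ automatically, because $u$ is Lipschitz with constant at most $\sg$. We may also restrict to the case $\rho\in L^1(\Om)$, which is how the statement is meant.

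Second, apply Theorem \ref{thm:nonexistence}. It gives $|\int_E\rho|\le\Lam P(E)$ for every $E\Subset\Om$ of finite perimeter. Alternatively, the bound follows directly from $|\mathbf D|=\varphi(|\nb u|)\le\Lam$ together with the mollification argument of that proof. This is the first assertion.

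Third, the ball case. Let $\Om=B_R$ and $\rho$ constant. Taking $E=B_s$ with $s<R$ gives $\rho|B_s|\le\Lam P(B_s)$, that is, $\rho\le N\Lam/s$. Letting $s\uparrow R$ yields $\rho\le N\Lam/R=\Lam h(B_R)$. This is consistent with Corollary \ref{cor:cheeger}(i), and it is sharp by Theorem \ref{thm:radial}(iii). The phrase ``total charge enclosed by a surface cannot exceed $\Lam$ times its area'' is just the inequality $\int_E\rho\le\Lam P(E)$ read for $E=B_s$.

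No step is a real obstacle. The only points needing care are the sign convention between $\mathbf D$ and $\aaa(\nb u)$, which is harmless because of the absolute value, and making precise that ``admits a potential'' means a weak solution in the sense of Definition \ref{def:weak}.
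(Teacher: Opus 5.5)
Your proposal is correct and follows the paper, which presents this corollary as Theorem~\ref{thm:nonexistence} read with $f=\rho$ and $\mathbf D=-\aaa(\nb u)$; the ball case is Corollary~\ref{cor:cheeger}(i) with $h(B_R)=N/R$, and your limit $s\uparrow R$ over $E=B_s$ is exactly that computation. One harmless slip: on a nonconvex $\Om$, $|\nb u|\le\sg$ a.e.\ does not give global Lipschitz constant $\le\sg$, but $S_g$ only asks $\|\nb u\|_{L^\infty}\le\sg$, and Theorem~\ref{thm:nonexistence} uses nothing beyond that bound and the weak equation.
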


We are not aware of a statement of this kind in the literature on nonlinear
electrodynamics; it is the exact electrostatic counterpart of the
Massari--Miranda obstruction in the theory of surfaces of prescribed mean
curvature, and it identifies unboundedness of $\mathbf D${, rather than
boundedness of $\mathbf E$,} as the property responsible for the good
behavior of the Born--Infeld model.

\subsection{Convergence rate of the weak field expansion}

The truncations $F_{n}$ of \eqref{eq:Fn} are, in the electrostatic language,
the successive \emph{post-Maxwellian corrections} of the theory: $n=2$ is the
Poisson equation, $n=4$ adds the first correction $\dive(|\mathbf E|^{2}\mathbf
E)$, and so on. Theorems \ref{thm:bsc} and \ref{thm:trunc} give a convergence statement for this expansion at the
level of the boundary value problem, together with the quantitative rate
\[
   \|\nb u_{n}-\nb u\|_{L^{2}(\Om)}^{2}\le\frac{2|\Om|}{a_{2}}
   \sum_{p>n}\frac{a_{p}}{p}K^{p}\le C\theta^{n}\quad
   \text{for every }\theta\in(K/\sg,1),
\]
where $K=\sup_{n}\|\nb u_{n}\|_{\infty}$.  
The expansion converges geometrically as long as the field stays away from its
maximal value, and degenerates precisely when the field saturates{, which is
the regime where the perturbative treatment is expected to break down. We stress
that the statement is conditional upon the uniform bound \eqref{eq:unifbound},
which Theorem \ref{thm:bsc} provides for $f\equiv0$}.

\subsection{Maximal spacelike hypersurfaces}

Let us come back to  \eqref{eq:mink}; here  $u$ can be seen as  the graph function of a spacelike hypersurface of
$\R^{N,1}$ with prescribed mean curvature $f$, the constraint
$\|\nb u\|_{\infty}\le1$ is the spacelike condition, and the compatibility
requirement $L\le\sg=1$ on the boundary datum is {exactly the requirement that 
guarantees that $g$ admits a weakly spacelike extension. We recall the terminology of
\cite[\S1, p.~132]{BS1982}: the graph of $u\in C^{0,1}(\Om)$ is \emph{weakly
spacelike} if $|Du|\le1$ a.e.\ in $\Om$, \emph{spacelike} if
$|u(x)-u(y)|<|x-y|$ whenever $x,y\in\Om$ and the segment $\overline{xy}$ lies in
$\Om$, and \emph{strictly spacelike} if it is spacelike, $u\in C^{1}(\Om)$ and
$|Du|<1$ in $\Om$; in \cite{BS1982} the authors observe that weak spacelikeness is
enough for the area integral to be defined, whereas strict spacelikeness is
needed for the Lorentz mean curvature to be defined at all. This is, in
geometric language, precisely the distinction between our Theorem \ref{thm:min}
and our Corollary \ref{cor:VI2PDE}. When $\Om$ is not convex the natural
condition on $g$ involves the geodesic rather than the Euclidean distance,
compare the notion of \emph{spacelike displacing} datum of \cite{BCP2021}, so
that $L\le\sg$ as defined in \eqref{eq:L} is sufficient but not necessary}. Our
results give a variational
proof of the existence of a minimizer for every $f\in L^{1}$ and every weakly
spacelike datum (Theorem \ref{thm:min}), of the fact that the minimizer solves
the equation as soon as it is uniformly spacelike (Corollary
\ref{cor:VI2PDE}), and of the reduction of the latter to a boundary gradient
estimate (Theorem \ref{thm:haarrado}){, which is the structure of the proof
of \cite{BS1982}, here isolated from the specific geometry}. Since
$\Lam=+\infty$, Theorem \ref{thm:nonexistence} is empty, consistently with the
solvability of the Dirichlet problem for arbitrary bounded mean curvature.
Conversely, our framework shows that the obstruction reappears for any
Lorentzian-type operator with finite $\Lam$, and that the light rays of
\cite{BS1982} and the saturation regions of Section \ref{sec:nonexistence} stand for  the
same phenomenon.

\subsection{Gradient constrained models}

Finally, the family \eqref{eq:main} interpolates between the Laplacian
(only $a_{2}\neq0$) and the purely gradient constrained models of
elastic--plastic torsion and of sandpile growth
\cite{BrezisSibony,CaffarelliFriedman,EFG,Prigozhin}: when the mass of the
sequence $\{a_{p}\}$ concentrates on large $p$, $\Phi$ converges to the
indicator function of $\{|\xi|\le\sg\}$ and $F$ to the energy of the
constrained model. Theorem \ref{thm:radial} is, from this point of view, a
regularized version of the classical explicit sandpile solution
$\sg\dist(x,\partial\Om)$, to which formally  converges in the said limit.

\subsection{Flux-limited diffusion}
The proof of Theorem \ref{thm:nonexistence} only uses the bound
$|\aaa(\nb u)|\le\Lam$ on the flux. Bounded fluxes are the defining feature of
flux-limited, or flux-saturated, diffusion, where the flux stays bounded as the
gradient becomes large; see \cite{CCCSS15} for a survey and
\cite{CMSV11,CCCSS16} for applications to morphogen transport and pattern
formation. In the regime (H3) the same saturation mechanism appears in a
stationary, gradient constrained setting, where it manifests itself as the upper
bound \eqref{eq:giusti} on the admissible sources rather than, as in the
evolutionary setting, through the formation of sharp fronts.

\section{Concluding remarks and future perspectives}\label{sec:open}

The picture that emerges from our results is governed by a single chain of implications, valid in
all three regimes \eqref{eq:trichotomy} and independent of any a priori estimate:
the constrained energy always has a unique minimizer (Theorem \ref{thm:min}),
the minimizer always solves the variational inequality \eqref{eq:VI} (Theorem
\ref{thm:VI}), and it solves the equation \eqref{eq:main} exactly when the
gradient constraint is inactive (Corollary \ref{cor:VI2PDE}). Whether the
constraint can be active, and whether an active constraint is compatible with
solvability, is decided by the saturation flux $\Lam$ and not by the energy
$\Phi(\sg)$: this is the main structural point of the paper, and it is what
distinguishes the present problem from its zeroth order ancestor \cite{P2010},
where the two quantities coincide. Table \ref{tab:summary} collects the
resulting classification.

\begin{table}[ht]
\centering
\small
\begin{tabular}{|l|c|c|l|l|}
\hline
 & $\Phi(\sg)$ & $\Lam$ & gradient constraint & solvability of \eqref{eq:main}\\
\hline
(H1) & $\infty$ & $\infty$ & inactive a.e., Rem.~\ref{rem:H1ae}
     & no obstruction\\
(H2) & $<\infty$ & $\infty$ & inactive a.e.\ if $L<\sg$, Cor.~\ref{cor:H2nosat}
     & no obstruction\\
(H3) & $<\infty$ & $<\infty$ & active for large data, Thm.~\ref{thm:radial}
     & $|\int_{E}f|\le\Lam P(E)$, Thm.~\ref{thm:nonexistence}\\
\hline
\end{tabular}
\medskip
\caption{The trichotomy \eqref{eq:trichotomy} and its consequences. In the
first two rows a solution exists as soon as a Lipschitz bound below $\sg$ is
available, which we obtain under the bounded slope condition for $f\equiv 0$ (Theorem
\ref{thm:bsc}); in the third row solutions cease to exist for
$f\equiv\lambda>\Lam h(\Om)$, a threshold which is attained on balls (Theorem
\ref{thm:radial}) and in dimension one (Proposition \ref{prop:1d}). The model
cases are $a_{p}\equiv1$ for (H1), the Born--Infeld and Minkowski operator
\eqref{eq:mink} for (H2), and the logarithmic operator of Example \ref{ex:log}
for (H3).}
\label{tab:summary}
\end{table}

Two by-products seem to us of independent interest. On the side of nonlinear
electrostatics, the obstruction \eqref{eq:giusti} is Gauss' law combined with a
bound on the displacement field, and it produces an isoperimetric bound on the
admissible charge densities (Corollary \ref{cor:charge}); the borderline
character of the Born--Infeld model is thereby traced back to the unboundedness
of $\mathbf D$, rather than to the boundedness of $\mathbf E$. On the geometric
side, the Euclidean obstruction of Giusti and Massari--Miranda and its absence
in the Lorentzian setting appear as the two cases $\Lam<\infty$ and
$\Lam=\infty$ of one criterion, an equivalence made quantitative by Remark
\ref{rem:euclcheck}.

As future perspectives, the most natural continuation concerns the a priori Lipschitz bound in the
regime $\Lam=\infty$, where by Corollary \ref{cor:VI2PDE} it is the only
ingredient still missing for solvability. It is worth stressing how narrow the
gap is: by Remark \ref{rem:H1ae} and Corollary \ref{cor:H2nosat} the constraint
is already known to be inactive \emph{almost everywhere} whenever $L<\sg$, so
that what has to be gained is only the passage from an almost everywhere bound
to a uniform one.

\begin{openproblem}\label{op:lip}
Extend Theorem \ref{thm:bsc} to a nonzero datum, and in particular to a datum
$f=f(x)$ genuinely depending on $x$. For $f$ constant, Theorem
\ref{thm:haarrado} reduces the question to the boundary estimate $S<\sg$ for the
quantity \eqref{eq:S}, which should follow from the Hilbert--Haar theory recalled
in Remark \ref{rem:hilberthaar}. For nonconstant $f$ even the comparison with
translates is unavailable, since the functional is no longer translation
invariant, and a different route, for instance an adaptation of the iteration of
\cite[Theorem 3.5]{BS1982}, which requires structure conditions on $\mathcal A$
and on $\nb\big[\mathcal A(|\xi|)\big]$, seems necessary.
\end{openproblem}

Three further directions seem to us worth pursuing.

\smallskip

\emph{Regularity.} Since the $\Delta_{2}$ condition fails whenever $\sg<\infty$,
none of the available regularity theories for functionals with $(p,q)$ or
nonstandard growth \cite{M2006,M2020,ELM2004,CM2015,BCM2018,L1991} applies
directly, and the results of Section \ref{sec:existence} give Lipschitz continuity but
nothing beyond. One would like to know whether the minimizer is $C^{1,\alpha}$
away from the saturation region, and whether $\partial\Sigma_{u}$ is regular; the
free boundary theory developed for the elastic--plastic torsion problem
\cite{CaffarelliFriedman} is the natural model.

\smallskip

\emph{Behavior at the threshold.} In the regime (H3), what happens at the
critical value $\mu=\Lam_{f}$ of Corollary \ref{cor:cheeger}(ii)? Theorem
\ref{thm:radial}(iii) and Proposition \ref{prop:1d} show that in the radial and
in the one dimensional case the equation is still solvable at the threshold,
with a minimizer touching the constraint on a null set, and that the necessary
condition \eqref{eq:giusti} is there also sufficient. Whether this persists in a
general domain we do not know; the question is the exact analogue of
\cite[Open Problem 4.4]{P2010}, and a positive answer would turn Theorem
\ref{thm:nonexistence} into a characterization.

\smallskip

\emph{The borderline datum $L=\sg$.} Corollary \ref{cor:1dLam} settles it in
dimension one: no datum other than $f\equiv0$ is admissible. In dimension
$N\ge2$ the geometry of the set where an extension of $g$ saturates the
constraint enters the picture, see Proposition \ref{prop:Aholds}(iii) and Remark
\ref{rem:Afails}, and even the existence of a minimizer may fail. In the same
borderline case, and in the regime (H2), we do not know whether the minimizer
can saturate the constraint on a set of positive measure; by Corollary
\ref{cor:H2nosat} this is excluded as soon as $L<\sg$, and it is precisely the
regime in which the maximal surfaces of \cite{BS1982} are allowed to contain
light rays. A satisfactory treatment of this case would presumably require the
relaxed setting in $BV(\Om)$, which we have not developed here.

\section*{Acknowledgements}
 Juan C. Ortiz Chata is partially supported by FAPESP 2021/08272-6 and 2022-06050 and grant 731/2026, Paraíba State Research Foundation (FAPESQ),  Brazil. {F.\, Petitta} is partially supported by the Gruppo Nazionale per l’Analisi Matematica, la Probabilit\`a e le loro Applicazioni (GNAMPA) of the Istituto Nazionale di Alta Matematica (INdAM).   {J.\,D.\,Rossi} is financially supported by CONICET grants PIP GI No 11220150100036CO, UBACyT grant 20020160100155BA, Argentina, and by MICIU/AEI/10.13039/501100011033, grant CEX2023-001347-S, Spain.

\section*{AI declaration}
 The authors acknowledge the use of Claude Opus 5 as an AI tool for detecting
typos and mistakes and for bibliographic verification. All mathematical
arguments and proofs in the final manuscript were checked and written by
the authors, who retain full responsibility for the whole content.

\end{document}